\documentclass[10pt,twoside,a4paper,reqno]{amsart}

\usepackage{amsmath,amssymb,eucal,graphicx,mathtools}
\usepackage{soul,xcolor}
\usepackage[normalem]{ulem}
\usepackage[foot]{amsaddr}
\usepackage{amsthm}
\usepackage{graphicx}
\usepackage{caption}
\usepackage{subcaption}
\usepackage{listings}
\usepackage{tikz}
\usetikzlibrary{shapes.geometric, positioning,calc, patterns,decorations.markings}
\usetikzlibrary{arrows}
\usetikzlibrary{decorations.pathmorphing}
\usepackage{tikz-cd}
\tikzcdset{arrow style=tikz, diagrams={>=stealth}}
\usepackage{stmaryrd}
\usepackage{comment}
\usepackage{verbatim}
\usepackage{tkz-euclide}
\usepackage{adjustbox}
\usepackage{mathdots}
\usepackage{circuitikz}
\usetikzlibrary{decorations.markings}
\usepackage[noadjust]{cite}
\usepackage{todonotes}
\usepackage{mathrsfs}
\usepackage{mathtools}
\usepackage{enumitem}
\usepackage{microtype}
\usepackage{multicol}
\usepackage[makeroom]{cancel}
\usepackage[margin=2.5cm]{geometry}

\usepackage{hyperref}
\usepackage[capitalise]{cleveref}
\usepackage{thmtools}

\definecolor{friendly_deepblue}{RGB}{51,34,206}
\definecolor{friendly_blue}{RGB}{136,204,238}
\definecolor{friendly_beige}{RGB}{221,104,30}
\definecolor{friendly_red}{RGB}{136,34,85}
\definecolor{friendly_green}{RGB}{68,170,153}

\mathchardef\mhyphen="2D 
\DeclareMathOperator{\mods}{mod}
\newcommand\leftmod[1]{{#1}\mhyphen\!\mods}
\DeclareMathOperator{\Hom}{Hom}
\DeclareMathOperator{\Ext}{Ext}

\DeclareMathOperator{\End}{End}

\DeclareMathOperator{\Aus}{Aus}
\newcommand{\MagnitudeOfModuleCat}[1]{\chi(\leftmod{#1})}

\newcommand{\Z}{\mathbb{Z}}

\counterwithin*{equation}{section}

\newtheorem{theorem}{Theorem}[section]
\newtheorem*{theorem*}{Theorem}
\newtheorem{corollary}[theorem]{Corollary}
\newtheorem{lemma}[theorem]{Lemma}
\newtheorem{proposition}[theorem]{Proposition}

\newtheorem*{conjecture*}{Conjecture}
\newtheorem{mainconjecture}{Conjecture}

\theoremstyle{definition}

\newtheorem{propositiondefinition}[theorem]{Proposition-Definition}
\newtheorem{definition}[theorem]{Definition}

\newtheorem{example}[theorem]{Example}
\newtheorem{remark}[theorem]{Remark}

\newcommand\str{\bgroup\markoverwith{\textcolor{red}{\rule[0.5ex]{2pt}{1.5pt}}}\ULon}

\usepackage{tcolorbox}

\title{Magnitude of module categories}

\author[E. D. Børve]{Erlend D. Børve}
\address[E.D.B]{Institut for Matematik, Aarhus Universitet, Ny Munkegade 118, 8000 Aarhus C, Denmark}
\email{erlend.d.borve@math.au.dk}

\author[D. Horiatakis]{Daniel Horiatakis}
\email{daniel.horiatakis@uni-graz.at}
\author[M. Kalck]{Martin Kalck}
\email[M. Kalck]{martin.kalck@uni-graz.at}
\address[D.H and M.K]{Institut für Mathematik und Wissenschaftliches Rechnen, Universität Graz, Heinrichstraße 36, 8010 Graz, Austria}

\thanks{All authors were partially funded by the Deutsche Forschungsgemeinschaft (DFG, German Research Foundation) -- Projektnummer 496500943. E.D.B. was also funded by the Aarhus University Research Foundation -- Grant number AUFF-E-2024-9-43.}

\keywords{Magnitude, Auslander algebra, translation quiver, Auslander--Reiten quiver, biserial algebra, path algebra, radical square zero algebra, self-injective algebra}
\subjclass[2020]{16G10, 16G70}

\begin{document}

\begin{abstract}
    We define an invariant of the module category of a representation-finite algebra by the magnitude of its Auslander algebra. This invariant will be called the \textit{magnitude of the module category.}
    For bound path algebras, it can be computed as the Euler characteristic of the Auslander--Reiten quiver, in a suitable sense. To aid the computation of our invariant, we define the \textit{Auslander--Reiten--Euler characteristic} of a translation quiver. We build on classical results in Auslander--Reiten theory to determine the magnitude of module categories of biserial algebras, hereditary path algebras, radical square zero bound path algebras, and self-injective bound path algebras. In these cases, we express our invariant in terms of other known quantities, notably the rank of the Grothendieck group and Coxeter numbers of Dynkin quivers. Based on our calculations and results, we obtain a conjectural characterisation of representation-finite biserial algebras in terms of the magnitude of the module category and the rank of the Grothendieck group.
\end{abstract}

\dedicatory{In memory of Idun Reiten.}
\maketitle
\tableofcontents

\section{Introduction}

Over the last two decades, a notion of \textit{magnitude} has been introduced and studied in various settings. It generalises the Euler characteristic of topological spaces to a wide range of other mathematical objects, like metric spaces, graphs, groupoids, and orbifolds \cite{Lei08,Lei13}.
Before it was mathematically developed, the magnitude of metric spaces appeared implicitly in the study of ecology and diversity \cite{SP94}. A mathematically rigorous definition \cite{Lei08,Lei13} later paved the way for interdisciplinary insights between mathematics and ecology \cite{Lei09,LC12,Mec15,LM16,Lei21}.

The most general setting where one considers magnitude is in the framework of enriched categories with an appropriate notion of ``size'' of objects \cite{Lei13, NT16}. In familiar instances, it is clear that one obtains very natural invariants. Indeed, for finite metric spaces, the magnitude is to be regarded as the ``effective number of points'' \cite[§2.1]{Lei13}. For finite skeletal categories whose classifying space is a finite simplicial complex, one recovers the Euler characteristic of the classifying space \cite[Proposition 2.11]{Lei08}. Another suitable setting is formed by Hom-finite linear categories, which lead Chuang--King--Leinster to define the magnitude of a finite-dimensional algebra \cite{CKL16}. It is analogous to the Euler characteristic of topological spaces (see \Cref{prop:CKL+Bongartz}).

The purpose of this text is to develop a theory of magnitude for categories of finitely generated modules over finite-dimensional algebras. At present, our constructions work for \textit{representation-finite algebras}, i.e.\! finite-dimensional algebras admitting only finitely many indecomposable modules up to isomorphism. Given a representation-finite algebra $\Lambda$, we will achieve our aim by investigating the magnitude of its Auslander algebra. This quantity will be called the \textit{magnitude of the module category of $\Lambda$}, and will be denoted $\MagnitudeOfModuleCat{\Lambda}$. It only depends on the category of indecomposable left $\Lambda$-modules (see \Cref{rem:chi_indmod}\eqref{rem:chi_indmod1}). If $\Lambda$ is a bound path $k$-algebra, \Cref{lem:MagARQuiver} lets us compute our invariant as $N - A + E$, where the terms are the numbers of vertices, arrows, and meshes, respectively, in the Auslander--Reiten quiver of $\leftmod{\Lambda}$. Here, $\leftmod{\Lambda}$ denotes the category of finitely generated left $\Lambda$-modules.

Our main results determine the magnitude of the module categories of several important classes of representation-finite algebras $\Lambda$. Recall that the \textit{rank} of $\Lambda$ is defined as the rank of the Grothendieck group $K_0(\leftmod{\Lambda})$. The rank of $\Lambda$ is equal to the number of finitely generated indecomposable projective left $\Lambda$-modules up to isomorphism.
Below, $\Lambda$ is a bound path algebra, so the rank is also equal to the number of vertices in the Gabriel quiver of $\Lambda$.

\begin{theorem*}[\Cref{thm:SpecialBiserial}, \Cref{cor:Nakayama}, and \Cref{cor:Local}]
    Let $\Lambda$ be a representation-finite (special) biserial bound path algebra of rank $n$. Then $\MagnitudeOfModuleCat{\Lambda} = n$. In particular, this holds when $\Lambda$ is a string algebra, a Nakayama algebra, or a representation-finite local bound path algebra.
\end{theorem*}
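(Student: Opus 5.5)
The plan is to compute $\MagnitudeOfModuleCat{\Lambda}$ with \Cref{lem:MagARQuiver}, as $N-A+E$ in the Auslander--Reiten quiver $\Gamma$ of $\leftmod{\Lambda}$, by organising the count of arrows around their targets. Each non-projective indecomposable $M$ is the end of exactly one mesh, so $E=N-n$. Every arrow ending in a non-projective vertex $M$ belongs to the mesh of $M$. Every arrow ending in a projective $P$ comes from an indecomposable summand of $\operatorname{rad}P$. Writing $\alpha(M)$ for the number of indecomposable middle terms of the almost split sequence ending in $M$, and $r(P)$ for the number of indecomposable summands of $\operatorname{rad}P$, we get $A=\sum_{M\text{ non-proj}}\alpha(M)+\sum_{P}r(P)$. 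Hence
\[
\MagnitudeOfModuleCat{\Lambda}=2N-n-A=\sum_{M\text{ non-proj}}\bigl(2-\alpha(M)\bigr)+\sum_{P}\bigl(2-r(P)\bigr)-n .
\]
So the theorem reduces to the combinatorial identity $\sum_{M}(2-\alpha(M))+\sum_P(2-r(P))=2n$. For biserial algebras every term in these sums is nonnegative: middle terms of almost split sequences have at most two non-projective summands, and $\operatorname{rad}P$ has at most two summands.

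I would first treat the string algebra case, using the Butler--Ringel description of $\Gamma$ by strings. Representation-finiteness rules out bands, so every indecomposable is a string module $M(C)$. Irreducible maps starting at $M(C)$ are obtained at each of the two ends of $C$ by either adding a hook or deleting a cohook. An end where neither operation is possible contributes a deficiency of $1$, and dually for maps ending at $M(C)$. Both $2-\alpha(M)$ and $2-r(P)$ count deficient ends, for arrows ending at $M$. A projective $P=M(C)$ has $C$ of the form $D^{-1}E$ with $D,E$ direct strings, and $2-r(P)$ is the number of empty arms among $D,E$. I then want to show that the total number of deficient target-ends is exactly $2n$. My intended route is a bijection: each vertex $i$ of the Gabriel quiver has at most two arrows starting at $i$ and at most two ending at $i$. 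I would match the two "slots" at each vertex $i$ with the deficient ends by following maximal hooks starting from $i$, equivalently the two arms of $P_i$. An empty arm of $P_i$ is itself a deficient end. A nonempty arm should correspond to a unique non-projective $M$ whose Auslander--Reiten sequence has only one middle term. The latter are exactly the sequences starting or ending at the peak and deep strings in Butler--Ringel's classification.

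Next I would pass from string algebras to special biserial algebras. Let $P$ be a projective-injective non-uniserial module whose radical modulo socle is $U\oplus V$, and set $\Lambda'=\Lambda/\operatorname{soc}P$. Then $\Lambda'$ has the same rank, and $\Gamma_\Lambda$ is obtained from $\Gamma_{\Lambda'}$ by adding the vertex $P$ and the two arrows $\operatorname{rad}P\to P\to P/\operatorname{soc}P$. In $\Gamma_\Lambda$ the module $P/\operatorname{soc}P$ is no longer projective, so it gains the mesh $\operatorname{rad}P\to P\oplus U\oplus V\to P/\operatorname{soc}P$. The contributions are therefore $+1-2+1=0$, and $\MagnitudeOfModuleCat{\Lambda}=\MagnitudeOfModuleCat{\Lambda'}$. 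Finally, for representation-finite biserial bound path algebras, I would invoke the known result that they are special biserial; otherwise I would use that their Auslander--Reiten quivers coincide with those of suitable special biserial degenerations. Nakayama and representation-finite local bound path algebras are special cases.

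The main obstacle is the bijective count in the string case. Making the correspondence between nonempty arms of the $P_i$ and one-middle-term almost split sequences precise requires care: with loops and with strings ending at vertices of low valency, hooks and cohooks can degenerate, and one-middle-term sequences may also end at simple or injective string modules. I would first check the identity on Nakayama algebras, where all sequences are explicit, to calibrate the bijection before attempting the general argument.
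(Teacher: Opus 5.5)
Your proposal is correct and is essentially the paper's proof. The rejection-lemma reduction to string algebras is \Cref{lem:DK_magnitude} together with \Cref{lem:WW}\eqref{lem:WWuniserial}, and for a string algebra your identity $\sum_M(2-\alpha(M))+\sum_P(2-r(P))=2n$ says exactly $|\mathcal{E}_1(\Lambda)|=\ell=|Q_1|$, which is the string case of \Cref{formula with 1-meshes and 3-meshes}. The bijection you flag as the main obstacle (a nonempty arm $a$ of $P_i$ goes to the almost split sequence ending at $P_i/\Lambda a$) is precisely the Butler--Ringel map $\mathrm{BR}_{\Lambda}$ of \Cref{Butler--Ringel}, which the paper cites rather than rebuilds; your only slip is the claim that all terms are nonnegative for special biserial algebras, which fails for the three-middle-term mesh you display yourself, but you never use it.
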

\begin{theorem*}[\Cref{thm:Dynkin} and \Cref{cor:radsq0}]
    Let {$k$ be a field and let} $Q$ be a Dynkin quiver. Then $\MagnitudeOfModuleCat{kQ} = h_{Q}-1$, where $h_{Q}$ is the Coxeter number of (the underlying Dynkin diagram of) $Q$, see \Cref{propdef:Coxeter}. 
    
    As a consequence, if $\Lambda$ is a radical square zero bound path $k$-algebra of rank $n$, we have that
    $$
    \MagnitudeOfModuleCat{\Lambda} = n+ \sum_{C} (h_C-|C_0|-1),
    $$
    where the sum is indexed over the connected components $C=(C_0,C_1)$ of the separating quiver of $\Lambda$.
\end{theorem*}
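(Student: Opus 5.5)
The plan is to use the formula $\MagnitudeOfModuleCat{\Lambda}=N-A+E$ from \Cref{lem:MagARQuiver}. Here $N$, $A$, $E$ denote the numbers of vertices, arrows and meshes of the Auslander--Reiten quiver $\Gamma$ of $\leftmod{kQ}$. Since $\Gamma$ is a translation quiver, each mesh corresponds to a non-projective vertex $x$. Thus $E=N-n$, where $n=|Q_0|$ counts the indecomposable projectives. The whole computation therefore reduces to counting vertices and arrows of $\Gamma$.

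For Dynkin $Q$ with underlying diagram $\Delta$, Gabriel's theorem and the preprojective structure give $\Gamma\subset \mathbb{Z}Q^{op}$. Every indecomposable is $\tau^{-m}P_i$, and $N=|\Phi^+|=nh_Q/2$.

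To count arrows, count them via their targets. Since $kQ$ is hereditary, the arrows into $\tau^{-m}P_i$ are:
\begin{itemize}
\item if $m=0$, one for each arrow of $Q$ at $i$ corresponding to a summand of $\operatorname{rad}P_i$;
\item if $m\geq 1$, one for each neighbour of $i$ in $\Delta$, by the mesh relations in $\mathbb{Z}Q^{op}$, provided the relevant predecessor lies in $\Gamma$.
\end{itemize}
The cleanest route is to exploit the symmetry of meshes. Each non-projective vertex $x$ has a mesh $\tau x\to y_j\to x$. Summing over meshes, every arrow of $\Gamma$ ending at a non-projective vertex is counted exactly once as an arrow $y_j\to x$. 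The arrows ending at projectives number exactly $|Q_1|=n-1$, since $\Delta$ is a tree. Hence
\[
A=(n-1)+\sum_{x \text{ non-proj}} \#\{\text{arrows into }x\}.
\]
Dually, counting by sources gives $A=(n-1)+\sum_{x\text{ non-inj}}\#\{\text{arrows out of }x\}$. Combining the two counts is tricky, so a more direct slice argument is preferable. Take the slices $\tau^{-m}(\text{projective slice})$; each is a copy of $Q^{op}$, possibly truncated at the injective end.

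The key identity I would aim for is that $A = 2N - n - (h_Q-1)\cdot$(correction), that is, $N-A+E = 2N-n-A$. I would then show $A=2N-n-h_Q+1$, equivalently that the number of arrows is $h_Q-1$ less than $2N-n$. My approach to this is the hammock or knitting view. Using the bijection $\Gamma\cong$ the full subquiver of $\mathbb{Z}\Delta$ between the projective slice and the injective slice (Nakayama permutation), $\Gamma$ is topologically a "strip" glued from meshes. Its Euler characteristic $N-A+E$ is that of the 2-complex obtained by filling the meshes.

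To get the value $h_Q-1$ uniformly, I would realize it via a Coxeter-element computation. Let $c_i$ be the length of the $\tau^{-1}$-orbit of $P_i$. Then $N=\sum_i c_i$, and the number of arrows is $\sum_{i-j \text{ edge}}(c_i+c_j-1)$, possibly adjusted by whether the orbits of the two ends are aligned. Using $c_i+c_{\nu(i)}=h_Q$ (with $\nu$ the Nakayama-type involution on $\Delta$ induced by $-w_0$) together with $\sum_i c_i = nh_Q/2$, I expect the alternating sum to simplify to $h_Q-1$.

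The main obstacle is the precise arrow count per edge, $c_i+c_j-1$ versus $c_i+c_j$, which depends on orientation. I would handle it by checking that the count is orientation-independent; reflection functors (APR tilting) change $\Gamma$ only locally. I would then verify the formula on a convenient orientation, or on a linear case and case-by-case for $D_n$ and $E_{6,7,8}$ using known $c_i$.

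For the corollary, I would use the stable equivalence of a radical square zero algebra $\Lambda$ with the hereditary algebra of its separating quiver. Their Auslander--Reiten quivers differ only by the $n$ projective-injective-free adjustments, namely the simple projectives and the gluing of $P_i$ with $S_i$. Tracking how $N$, $A$, $E$ change under this gluing, and summing the hereditary contribution $h_C-1$ over the components $C$, yields the correction term $n-\sum_C|C_0|$.
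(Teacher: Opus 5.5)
Your orbit-length route works, and it is genuinely different from the paper's main argument. Your fallback (APR invariance plus a convenient orientation) is essentially what the paper does.

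The paper obtains type $A$ from \Cref{thm:SpecialBiserial}. It proves orientation-independence in \Cref{lem:APR} via the cluster category. For types $D$ and $E$ it then orients all arrows away from the branch vertex, so that $\mathfrak{AR}^{kQ}\cong\frac{h_Q}{2}Q^{\mathrm{op}}$, and $\chi_{\mathrm{AR}}(tT)=2t-1$ gives $h_Q-1$.

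Your computation is instead uniform in type and orientation. It also covers $A_n$ with $h_Q$ odd, where no rectangle exists, and needs no cluster category. The price is the classical input $c_i+c_{\nu(i)}=h_Q$. This follows, for example, from $\tau^{-h_Q}\cong[2]$ on $\mathcal{D}^b(\leftmod{kQ})$ together with the fact that $[1]$ moves the $\tau$-orbit of $i$ to that of $\nu(i)$. The paper's rectangle is exactly the special case $c_i=c_{\nu(i)}=h_Q/2$ for a $\nu$-symmetric orientation.

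To close your argument, note that the obstacle you flag does not exist. $\Gamma$ is the convex region of $\Z Q^{\mathrm{op}}$ between the projective and injective sections. So the full mesh ending at a non-projective $\tau^{-m}P_i$ lies in $\Gamma$, and that vertex receives exactly $\deg_\Delta(i)$ arrows. Equivalently, for every orientation, the zigzag between adjacent orbits $i,j$ meets $\Gamma$ in a contiguous segment with exactly $c_i+c_j-1$ arrows. Hence, with no APR or case-by-case check,
\[
\MagnitudeOfModuleCat{kQ}=2N-A-n=\sum_{i\in Q_0}\bigl(2-\deg_\Delta(i)\bigr)c_i-1 .
\]
The identity $\sum_i c_i=nh_Q/2$ does not evaluate this sum. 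What you need is that $\nu$ is a graph automorphism, so $\deg_\Delta\nu(i)=\deg_\Delta(i)$. Then $2\sum_i(2-\deg_\Delta(i))c_i=h_Q\sum_i(2-\deg_\Delta(i))=2h_Q$, since $\Delta$ is a tree.

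Drop the filled-mesh heuristic. A mesh with $m$ middle terms has cycle rank $m-1$, so one $2$-cell per mesh does not give a contractible strip.

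For the radical square zero part, your strategy matches the paper's, but the comparison should be stated differently. $\mathfrak{AR}^{kQ_{\mathrm{sep}}}$ arises from $\mathfrak{AR}^{\Lambda}$ by splitting each simple $S_i$ into the simple projective $S_{i^+}$ and the simple injective $S_{i^-}$; it is not a gluing of $P_i$ with $S_i$. So $\mathfrak{AR}^{kQ_{\mathrm{sep}}}$ has $N+n$ vertices, the same $A$ arrows, and rank $2n$. \Cref{lem:MagARQuiver} then gives $\MagnitudeOfModuleCat{kQ_{\mathrm{sep}}}=\MagnitudeOfModuleCat{\Lambda}+n$, and $\sum_C|C_0|=2n$ finishes.
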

\begin{theorem*}[\Cref{thm:Self-injective}]
     Let $\Lambda$ be a connected non-semisimple representation-finite self-injective bound path algebra of rank $n$. 
    Then,
    \begin{align*}
   \MagnitudeOfModuleCat{\Lambda} = n\left(2{h_{T}-1\over |{T}_0| }-1\right),
    \end{align*}
  where $T=(T_0,T_1)$ is a Dynkin quiver such that the stable Auslander--Reiten quiver of $\leftmod{\Lambda}$ is an admissible quotient of the repetitive quiver $\Z T$, and $h_{T}$ is the Coxeter number of (the underlying Dynkin diagram of) $T$. Note that the statement does not depend on the orientation of $T$.
\end{theorem*}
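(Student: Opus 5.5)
Throughout I use the formula of \Cref{lem:MagARQuiver}: for a bound path algebra $\Lambda$, $\MagnitudeOfModuleCat{\Lambda}=N-A+E$, where $N$, $A$, $E$ count the vertices, arrows and meshes of the Auslander--Reiten quiver $\Gamma$ of $\leftmod{\Lambda}$. The plan is to compute each count from the Riedtmann structure theorem for the stable part. Since $\Lambda$ is connected, non-semisimple, representation-finite and self-injective, the stable Auslander--Reiten quiver is $\Gamma_s\cong\Z T/G$ for a Dynkin quiver $T$ and an admissible infinite cyclic group $G$ of automorphisms of $\Z T$. The quiver $\Gamma$ is then obtained from $\Gamma_s$ by adding the $n$ indecomposable projective-injective vertices $P$. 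For each such $P$ we add the arrows $\operatorname{rad}P\to P\to P/\operatorname{soc}P$, and the mesh starting at $\operatorname{rad}P$ gains the extra middle term $P$.

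First I count the stable part. The vertices and arrows of $\Z T$ come in $\tau$-orbits: one vertex orbit per vertex of $T$, and two arrow orbits per arrow of $T$, namely $(i,x)\to(i,y)$ and $(i,y)\to(i+1,x)$. Each vertex orbit is in bijection with $\Z$, and $G$ acts freely. So if $\Gamma_s$ has $m$ vertices, then $\Gamma_s$ has $m/|T_0|$ copies of each orbit-slice, giving $2(|T_0|-1)\cdot m/|T_0|$ arrows. Every vertex of $\Gamma_s$ is the end of exactly one mesh, because $\tau$ is defined everywhere in the stable category. Therefore:
\begin{itemize}
\item the stable part contributes $m - 2(|T_0|-1)m/|T_0| + m$ to $N-A+E$;
\item adding the projectives adds $n$ vertices and $2n$ arrows, with no new meshes, since meshes are counted by non-projective end terms.
\end{itemize}
The total is
\[
\MagnitudeOfModuleCat{\Lambda} \;=\; \frac{2m}{|T_0|} - n.
\]
I would verify against the mesh convention in \Cref{lem:MagARQuiver}, in particular whether a mesh ending at a module $X$ is counted once for each non-projective $X$. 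I would also check that no arrows occur between projectives, which holds since $\Lambda$ is non-semisimple and connected.

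It remains to show $m = n(h_T-1)$, which is the main obstacle. The target formula $n(2(h_T-1)/|T_0|-1)$ is exactly equivalent to this. I would use the known description of $G$: it is generated by an automorphism $\tau^{-r}\phi$ with $r$ determined by $m = r|T_0|$. The key input is that, up to the $\tau$-period, the projective vertices sit over the stable component as the images of the Nakayama-type positions. Concretely, in $\Z T$ the ``$\Omega$-type'' shift $\Omega^{-1}=\tau^{-?}\nu$ satisfies $\tau^{-h_T}\cong$ translation by $\nu^2$-type. More usefully, the number of projectives satisfies $n|T_0| \cdot (h_T-1)/|T_0| \cdot$ \ldots, which Riedtmann records as $m_T = h_T-1$: the frequency of $G$ is $f=r/m_T$, and $n = f|T_0|$ counts the configuration points.

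So I would invoke Riedtmann's (and Bretscher--Läser--Riedtmann's) classification that a configuration of $\Z T$ has exactly $|T_0|$ points per $m_T=h_T-1$ consecutive $\tau$-slices. Equivalently, $n = |T_0|\cdot r/(h_T-1)$. Combining this with $m=r|T_0|$ gives $m=n(h_T-1)$, as required. I would first try to cite this counting fact directly. Failing that, I would derive it from the $\tau$-periodicity relation $\nu\tau^{-(h_T-1)}$ on $\Z T$ together with the fact that the configuration is stable under this relation. That the result is independent of the orientation is immediate, since $\Z T$ is.
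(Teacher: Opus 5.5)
Your proposal is correct and is essentially the paper's proof. The paper also removes the $n$ projective vertices and their $2n$ arrows via the Drozd--Kiri\v{c}enko lemma, and shows $\chi_{\mathrm{AR}}(\Z T/\langle\sigma\tau^{-r}\rangle)=2r$, which is your $2m/|T_0|$ with $m=r|T_0|$. It then cites \cite[2.3]{BLR81} for $r=n(h_T-1)/|T_0|$, which is exactly the configuration count you end up invoking. Two things to tidy: the paper makes your arrow count rigorous when $\sigma$ permutes $\tau$-orbits by passing to $\Z T/\langle\tau^{-sr}\rangle$ (with $s$ the order of $\sigma$), on which $\langle\sigma\tau^{-r}\rangle$ acts freely with orbits of size $s$ on vertices, arrows and meshes; and the garbled middle paragraph about $\Omega$ and $\nu$ is not needed.
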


All of our results give evidence to the following:

\begin{mainconjecture}\label{conjecture}
    Let $\Lambda=kQ/I$ be a representation-finite bound path algebra of rank $n$.
    \begin{enumerate}
        \item We have $\MagnitudeOfModuleCat{\Lambda}\geq n$.
        \item We have $\MagnitudeOfModuleCat{\Lambda} = n$ if and only if $\Lambda$ is special biserial.
    \end{enumerate}
\end{mainconjecture} 

We show in \Cref{formula with 1-meshes and 3-meshes} that 
	\begin{equation}\label{eq:introE}
\MagnitudeOfModuleCat{\Lambda}  = n - \ell + |\mathcal{E}_1(\Lambda)| - |\mathcal{E}_3(\Lambda)| - 2|\mathcal{E}_4(\Lambda)|.
	\end{equation} 
    Here, $\ell$ is the number of arrows into projective vertices of the Auslander--Reiten quiver, and $\mathcal{E}_m(\Lambda)$ denotes the number of isomorphism classes of Auslander--Reiten sequences in $\leftmod{\Lambda}$ with $m$ indecomposable middle terms. One possible approach to proving \Cref{conjecture} could be to compare the terms on the right hand side of \eqref{eq:introE} with each other.

\textbf{Structure of the paper.} 
We begin by arguing that the magnitude of an Auslander algebra is an invariant under Morita equivalence. It is not, however, invariant under derived Morita equivalence, see \Cref{rem:tilting}. The rest of \Cref{sec: Magnitude of Auslander algebras} is dedicated to developing tools which will be used to calculate magnitudes of specific Auslander algebras in later sections. 

In \Cref{sec: Translation quivers}, we recall the notion of a translation quiver, and define the Auslander--Reiten--Euler characteristic of such. One specific type of translation quivers are repetitive quivers. Since many Auslander--Reiten quivers are closely related to them, they provide a useful tool for magnitude computations in later sections. In Sections \ref{sec:biserial}, \ref{sec:Dynkin}, and \ref{sec:SelfInj}, we prove our main results listed above.  

\textbf{Notation and conventions.} Throughout the text, we fix a field $k$. Given a finite-dimensional $k$-algebra $\Lambda$, the word \textit{module} will mean finitely generated left $\Lambda$-module. 
If $a$ and $b$ are arrows in a quiver, such that the target of $a$ is the source of $b$, we denote the concatenated path by $ba$.

\textbf{Acknowledgements.} 
The authors thank Gustavo Jasso, Alastair King, David Pauksztello, and Gordana Todorov for useful discussions.
E.D.B. is grateful to M.K. for organising a one-week research visit in Graz in February 2026.

\section{Magnitude of Auslander algebras}\label{sec: Magnitude of Auslander algebras}

\subsection{Magnitude of finite-dimensional algebras}

\begin{definition}[{\cite{Lei08}}]\label{def:Lei08}
    Fix a ring $(R,+,\cdot)$ with multiplicative identity element $1_{R}$. Let $n\geq 1$, let $\mathbf{Z}$ be an $n\times n$ square matrix over $R$, and let $\mathbf{1}$ denote the $n \times 1$ column vector in which each entry is $1_{R}$. 
    \begin{enumerate}
        \item A \textit{weighting} of $\mathbf{Z}$ is a column vector $\mathbf{w}$ over $R$ such that $\mathbf{Z}\mathbf{w} = \mathbf{1}$. Dually, a \textit{co-weighting} of $\mathbf{Z}$ is a row vector $\mathbf{c}$ over $R$ such that $\mathbf{c}\mathbf{Z} = \mathbf{1}^T$.
        \item We say that $\mathbf{Z}$ \textit{has magnitude over $R$} if it admits at least one weighting and at least one co-weighting.
        In this case, let $\mathbf{w} =(w_i)_i$ be a weighting of $\mathbf{Z}$, and let $\mathbf{c}=(c_i)_i$ be a co-weighting of $\mathbf{Z}$. The \textit{magnitude} of $\mathbf{Z}$ is defined as follows:
        \begin{equation*}
            \chi(\mathbf{Z}) \coloneqq \mathbf{c} \mathbf{Z} \mathbf{w} \in R.
        \end{equation*}
        \end{enumerate}
\end{definition}

By the associativity of matrix multiplication, the following equalities hold: 
       \begin{equation*}
            \chi(\mathbf{Z}) = \sum_i w_i = \sum_i c_i.
        \end{equation*}
    Consequently, the magnitude of $\mathbf{Z}$ is independent of the choice of weighting and co-weighting.

\begin{definition}[{\cite[Example 1.2.2(ii)]{Lei08},\cite{CKL16}}]\label{def:CKL16_magn}
\,
    \begin{enumerate}
        \item\label{def:CKL16_magn_1} Let $n\geq 1$ and let $\mathcal{C}$ be a $k$-linear category with $n$ objects up to isomorphism such that $\Hom_{\mathcal{C}}(X,Y)$ is finite-dimensional for all $X,Y\in\mathcal{C}$. The \textit{similarity matrix} of $\mathcal{C}$ is the $n\times n$-matrix $\mathbf{Z}_{\mathcal{C}}\coloneqq (\dim_k\Hom_{\mathcal{C}}(X_i,X_j))_{i,j}$, where $\{X_i\}_{i=1}^n$ is a complete set of pairwise non-isomorphic objects in $\mathcal{C}$.
        We say that $\mathcal{C}$ \textit{has magnitude} if $\mathbf{Z}_{\mathcal{C}}$ has magnitude as a matrix over $\mathbb{Q}$.
        The \textit{magnitude} of $\mathcal{C}$ is then defined as the magnitude of $\mathbf{Z}_{\mathcal{C}}$.
        \item Let $\Gamma$ be a finite-dimensional $k$-algebra. We define the \textit{similiarity matrix} of $\Gamma$ as the similarity matrix of the category of finitely generated indecomposable projective left $\Gamma$-modules. We denote it by $\mathbf{Z}_{\Gamma}$.
        We say that $\Gamma$ \textit{has magnitude} if $\mathbf{Z}_{\Gamma}$ has magnitude as a matrix over $\mathbb{Q}$. The \textit{magnitude} of $\Gamma$ is then the rational number $\chi(\mathbf{Z}_{\Gamma})$, henceforth denoted $\chi(\Gamma)$.
    \end{enumerate}
\end{definition}

If $k$ is algebraically closed, then a semisimple $k$-algebra of rank $n$ has magnitude $n$, since its similarity matrix is the identity $n\times n$-matrix. As a less trivial example, consider the complex numbers as a two-dimensional real algebra. Its similarity matrix is the $1\times 1$-matrix $\begin{bmatrix} 2 \end{bmatrix}$, yielding that the magnitude is equal to $\frac{1}{2}$.

In \Cref{def:CKL16_magn}\eqref{def:CKL16_magn_1}, the similarity matrix of $\mathcal{C}$ is independent of the choice of the set $\{X_i\}_{i=1}^n$. Moreover, different orderings of $\{X_i\}_{i=1}^n$ determine the same similarity matrices up to a permutation of columns. Thus, the ordering will neither affect whether $\mathcal{C}$ has magnitude, nor the value of $\chi(\mathcal{C})$ when it is defined.

Two finite-dimensional $k$-algebras $\Gamma_1$ and $\Gamma_2$ are \textit{Morita equivalent} if there exists an exact equivalence
\begin{equation*}
    \begin{tikzcd}
        \leftmod{\Gamma_1} \arrow[r] & \leftmod{\Gamma_2} 
    \end{tikzcd}
\end{equation*}
between categories of finitely generated left modules. Morita equivalence only alters the similarity matrix up to a permutation of columns. Hence, if $\Gamma_1$ and $\Gamma_2$ are Morita equivalent, then $\Gamma_1$ has magnitude precisely when $\Gamma_2$ has magnitude. In this case, we have $\chi(\Gamma_1)=\chi(\Gamma_2)$. In other words, the magnitude of finite-dimensional $k$-algebras is invariant under Morita equivalence.

\begin{remark}\label{rem:Cartan}
    Let $n\geq 1$. 
    \begin{enumerate}
        \item\label{rem:Cartan1} If $\mathbf{Z}$ is an invertible $n\times n$-matrix over $R$, then it admits a unique weighting $\mathbf{Z}^{-1}\mathbf{1}$ and a unique co-weighting $\mathbf{1}^T\mathbf{Z}^{-1}$. In particular, it has magnitude over $R$, which, by definition, is equal to the sum of the entries in $\mathbf{Z}^{-1}$ (see \cite[Lemma 1.1.4]{Lei13}).
        \item\label{rem:Cartan1.5} Let $\Gamma$ be a finite-dimensional $k$-algebra and let $\{P_i\}_{1 \leq i \leq n}$ (resp.\! $\{S_i\}_{1 \leq i \leq n}$) denote the set of finitely generated indecomposable projective (resp. simple) left $\Gamma$-modules up to isomorphism. One defines the \textit{Cartan matrix} $\Gamma$ as the $n\times n$-matrix $\mathbf{C}_{\Gamma}=(c_{i,j})_{i,j}$, where $c_{i,j}$ is the multiplicity of $S_i$ as a composition factor of $P_j$. Suppose in addition that $\Gamma$ has finite global dimension. Since $\det(\mathbf{C}_{\Gamma})$ then is equal to $\pm 1$ \cite[Proposition 21]{Eil54} (conjecturally only $1$), it follows from \eqref{rem:Cartan1} that $\mathbf{C}_\Gamma$ has magnitude over $\Z$.
        \item\label{rem:Cartan2} 
        Let $\mathbf{Z}_{\Gamma}$ denote the similarity matrix of $\Gamma$ and let $d_i$  denote $\dim_k\End_{\Gamma}(S_i)$ for $1\leq i\leq n$. Then $(\mathbf{Z}_{\Gamma})_{i,j} = d_i c_{i,j}$ (see \cite[§4]{CKL16}).
        It now follows from \eqref{rem:Cartan1.5} that $\Gamma$ has magnitude whenever it has finite global dimension.
    \end{enumerate}
\end{remark}

In this text, we focus our attention on finite-dimensional $k$-algebras that are of the form $kQ/I$, where $Q$ is a finite quiver and $I$ is an admissible ideal of the path $k$-algebra $kQ$. These will be referred to as \textit{bound path $k$-algebras}.
We refer to $Q$ as the \textit{Gabriel quiver} of $\Lambda$, as it is uniquely determined up to isomorphism of quivers (see \cite[Theorem III.1.9(d)]{ARS97}). Since all simple modules are one-dimensional in this case, the similarity matrix is equal to the Cartan matrix. {By \Cref{rem:Cartan}\eqref{rem:Cartan1.5},} we have that $\chi(kQ/I)$ is an integer if $kQ/I$ has finite global dimension.
Note that if $k$ is algebraically closed, all finite-dimensional $k$-algebras are bound path $k$-algebras up to Morita equivalence.

\begin{proposition}[{\cite[Theorem 1.1]{CKL16}}]\label{prop:CKL+Bongartz}
    Let $\Gamma = kQ/I$ be a bound path $k$-algebra and let $S$ be the semisimple $\Gamma$-module $\Gamma / \mathrm{rad}(\Gamma)$, where $\mathrm{rad}(\Gamma)$ denotes the Jacobson radical of $\Gamma$.
    Suppose that $\Gamma$ has finite global dimension $d$. Then the magnitude of $\Gamma$ is given by 
        \begin{equation}\label{eq:CKL}
            \chi(\Gamma) = \sum_{i=0}^d (-1)^i \dim_k \Ext^i_{\Gamma}(S,S).
        \end{equation}
\end{proposition}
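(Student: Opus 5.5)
The plan is to compute $\sum_i (\mathbf{C}_\Gamma^{-1})_{i,j}$ summed over all $i,j$, and to identify the entries of $\mathbf{C}_\Gamma^{-1}$ with alternating sums of dimensions of Ext-groups between simples. By \Cref{rem:Cartan}\eqref{rem:Cartan1.5}, $\mathbf{C}_\Gamma$ is invertible over $\Z$, since $\Gamma$ has finite global dimension. Since $\Gamma$ is a bound path algebra, every simple $S_i$ is one-dimensional, so $\mathbf{Z}_\Gamma=\mathbf{C}_\Gamma$. By \Cref{rem:Cartan}\eqref{rem:Cartan1}, $\chi(\Gamma)$ is therefore the sum of all entries of $\mathbf{C}_\Gamma^{-1}$.

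First, consider the Euler form on $K_0(\leftmod{\Gamma})$:
\[
\langle M,N\rangle=\sum_{i\geq 0}(-1)^i\dim_k\Ext^i_\Gamma(M,N).
\]
This sum is finite because the global dimension is $d$. The form is additive on short exact sequences in each variable, by the long exact Ext sequences, so it descends to a bilinear form on the Grothendieck group. Next, evaluate it on projectives against simples: $\langle P_j,S_i\rangle=\dim_k\Hom_\Gamma(P_j,S_i)=\delta_{ij}$, using $\End(S_i)=k$. In $K_0$ we have $[P_j]=\sum_i c_{i,j}[S_i]$. Since $\Gamma$ has finite global dimension, the classes $[P_j]$ also form a $\Z$-basis of $K_0$, so $[S_i]=\sum_j (\mathbf{C}_\Gamma^{-1})_{j,i}[P_j]$.

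Substituting this expression for $[S_i]$ gives
\[
\langle S_i,S_l\rangle=\sum_j(\mathbf{C}_\Gamma^{-1})_{j,i}\langle P_j,S_l\rangle=(\mathbf{C}_\Gamma^{-1})_{l,i}.
\]
Summing over all $i$ and $l$ yields $\chi(\Gamma)=\sum_{i,l}\langle S_i,S_l\rangle$. By additivity of Ext in each argument, this equals $\sum_{m=0}^d(-1)^m\dim_k\Ext^m_\Gamma(S,S)$ for $S=\bigoplus_i S_i\cong\Gamma/\mathrm{rad}(\Gamma)$. Here we use that $\Gamma/\mathrm{rad}\Gamma$ is basic, which holds because $\Gamma$ is a bound path algebra.

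The only delicate point is the index bookkeeping, namely which of the two possible transposes of $\mathbf{C}_\Gamma^{-1}$ appears in the formula. This does not matter, because we sum over all entries, and the sum of all entries of a matrix equals that of its transpose. The justification that the $[P_j]$ form a basis of $K_0$ is standard. Each module has a finite projective resolution, so the $[P_j]$ span $K_0$, and there are exactly $n$ of them while $K_0$ is free of rank $n$.
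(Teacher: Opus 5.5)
Your proof is correct: since $\langle P_j,S_i\rangle=\delta_{ij}$, the Euler form on $K_0(\leftmod{\Gamma})$ has Gram matrix $(\mathbf{C}_\Gamma^{-1})^{T}$ in the basis of simples, and summing its entries gives both $\chi(\Gamma)$ and $\sum_{i}(-1)^i\dim_k\Ext^i_\Gamma(S,S)$. The paper does not prove this statement itself; it quotes it from Chuang--King--Leinster. Your Euler-form argument is essentially the standard proof given there, specialised to the case $\End_\Gamma(S_i)\cong k$, where the similarity matrix coincides with the Cartan matrix.
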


\begin{remark}\label{rem:CKL_fieldext}
    Let $\Gamma$ and $S$ be as in \Cref{prop:CKL+Bongartz}, and let $K$ be a field extension of $k$.
    \begin{enumerate}  
    \item\label{rem:CKL_fieldext1} The \textit{extension of scalars} of $\Gamma$ by $K$ is defined as the $K$-algebra $\Gamma^K \coloneqq K\otimes_k \Gamma$. 
    It is a bound path $K$-algebra, since it is isomorphic to $KQ/(K\otimes_k I)$. In particular, the Gabriel quiver of the $K$-algebra $\Gamma^K$ is isomorphic to that of the $k$-algebra $\Gamma$. Since both $\Gamma$ and $\Gamma^K$ are bound path algebras, the natural homomorphism $\Gamma^K/\mathrm{rad}(\Gamma^K) \to K \otimes_k S$
    is an isomorphism of semisimple $\Gamma^K$-modules.
    \item\label{rem:CKL_fieldext3} For any $i\geq 0$, there is a natural isomorphism of bifunctors
    \begin{equation}\label{eq:KasjanExt}
        \begin{tikzcd}
        K\otimes_k \Ext^i_{\Gamma}(-,-) \arrow[r] & \Ext^i_{\Gamma^K}(K\otimes_k -,K\otimes_k -)
   	 \end{tikzcd}
    \end{equation}
     from $(\leftmod{\Gamma})^{\mathrm{op}}\times \leftmod{\Gamma}$ to $\leftmod{K}$ { \cite[Lemma 2.2(a)]{Kas00}}. By \eqref{rem:CKL_fieldext1} and \Cref{prop:CKL+Bongartz}, it follows that $\chi(\Gamma^K)=\chi(\Gamma)$.
    \end{enumerate}
\end{remark}

\subsection{Auslander algebras}
    
Let $\Lambda$ be a representation-finite $k$-algebra. A \textit{representation-generator} of $\Lambda$ is a $\Lambda$-module $M$ such that each indecomposable $\Lambda$-module is a direct summand of $M$. 
The endomorphism $k$-algebra $\mathrm{End}_{\Lambda}(M)$ is called the \textit{Auslander algebra of $\Lambda$}. Although it depends on the choice of the representation-generator, it is uniquely determined up to Morita equivalence. We will denote the (basic) Auslander algebra of $\Lambda$ by $\mathrm{Aus}(\Lambda)$. Auslander showed that $\mathrm{Aus}(\Lambda)$ has global dimension at most 2 and dominant dimension at least 2 \cite{Aus71}. A finite-dimensional $k$-algebra with these properties is called an \textit{Auslander $k$-algebra}.
The \textit{Auslander correspondence} \cite{Aus71} is given by the bijection
\begin{equation*}
    \begin{tikzcd}[row sep=1em]
        \big\{\text{representation-finite $k$-algebras} \big\}/\text{Morita equiv.}\arrow[r] & \big\{\text{Auslander $k$-algebras}\big\}/\text{Morita equiv.} \\
        {[\Lambda]} \arrow[r,mapsto] \arrow[u,phantom,sloped,"\in"] &  \arrow[u,phantom,sloped,"\in"] [\mathrm{Aus}(\Lambda)],
    \end{tikzcd}
\end{equation*}
where $[-]$ denotes Morita equivalence classes. We are now ready to present the main object of study in this paper.

\begin{definition}\label{def:MagnitudeOfModuleCat}
    Let $\Lambda$ be a representation-finite $k$-algebra. We define the \textit{magnitude of the module category of $\Lambda$} to be $\chi(\Aus(\Lambda))$, namely the magnitude of the Auslander algebra of $\Lambda$. We denote this quantity by $\MagnitudeOfModuleCat{\Lambda}$.
\end{definition}

The Auslander correspondence preserves Morita equivalences. Thus, Morita equivalent representation-finite $k$-algebras have module categories of the same magnitude. \Cref{def:MagnitudeOfModuleCat} thus provides a map
\begin{equation*}
    \begin{tikzcd}[row sep=1em]
        \big\{\text{representation-finite $k$-algebras} \big\}/\text{Morita equiv.}\arrow[r,"\MagnitudeOfModuleCat{-}"] & \mathbb{Q} \\
        {[\Lambda]} \arrow[r,mapsto] \arrow[u,phantom,sloped,"\in"] &  \arrow[u,phantom,sloped,"\in"] \MagnitudeOfModuleCat{\Lambda}
    \end{tikzcd}
\end{equation*}
Since Auslander algebras have finite global dimension, this is well-defined as a result of {\Cref{rem:Cartan}\eqref{rem:Cartan2}}. By \Cref{prop:CKL+Bongartz}, this map returns an integer whenever the input is Morita equivalent to a bound path $k$-algebra.

\begin{remark}\label{rem:chi_indmod}
\, \\
Let $\Lambda$ be a representation-finite $k$-algebra, and fix a basic representation-generator $M=\bigoplus\limits_{i=1}^N M_i$ of $\Lambda$.
\begin{enumerate}
    \item\label{rem:chi_indmod1} By projectivisation (see \cite[§II.2]{ARS97}), the functor
    \[\begin{tikzcd}
          \Hom_{\Lambda}(-,M)\colon  \mathrm{ind}(\leftmod{\Lambda})^{\mathrm{op}} \arrow[r] & \mathrm{ind}(\mathrm{Aus}(\Lambda)\mhyphen\mathrm{proj})
    \end{tikzcd}\]
    is an equivalence of $k$-linear categories. It goes from the opposite category of indecomposable left $\Lambda$-modules to the category of indecomposable projective left $\mathrm{Aus}(\Lambda)$-modules. Under this equivalence, we may identify the similarity matrix of $\mathrm{Aus}(\Lambda)$ with $(\dim_k\Hom_{\Lambda}(M_j, M_i))_{i,j}$. 
    The similarity matrix of $\mathrm{ind}(\leftmod{\Lambda})$ then becomes the transpose of the similarity matrix of $\mathrm{Aus}(\Lambda)$. We deduce that
    \begin{equation}\label{eq:chiindmod}
       \MagnitudeOfModuleCat{\Lambda} = \chi(\mathrm{ind}(\leftmod{\Lambda})),
    \end{equation}
    where the right hand side denotes the magnitude of $\mathrm{ind}(\leftmod{\Lambda})$ as a $k$-linear category with finitely many objects up to isomorphism. In particular, the quantity $\MagnitudeOfModuleCat{\Lambda}$ only depends on $\mathrm{ind}(\leftmod{\Lambda})$.
    \item\label{Splitting formula}
    Suppose that $\Lambda$ is a direct product $\Lambda_1\times \Lambda_2$. Since $\mathrm{ind}(\leftmod{\Lambda})$ is then equivalent to the disjoint union $\mathrm{ind}(\leftmod{\Lambda_1})\sqcup \mathrm{ind}(\leftmod{\Lambda_2})$, the similarity matrix of $\mathrm{ind}(\leftmod{\Lambda})$ can be made block diagonal. By \eqref{rem:chi_indmod1}, we find that 
    \begin{equation*}
        \MagnitudeOfModuleCat{\Lambda}  = \MagnitudeOfModuleCat{\Lambda_1} + \MagnitudeOfModuleCat{\Lambda_2}.
    \end{equation*}
\end{enumerate}
\end{remark}

Let $\Lambda$ be a bound path $k$-algebra.
Recall that the \textit{Auslander--Reiten quiver} of $\leftmod{\Lambda}$ is the quiver $\mathfrak{AR}^{\Lambda}$ whose set of vertices is given by isomorphism classes of finitely generated indecomposable left $\Lambda$-modules, and arrows are given by irreducible morphisms. Let $\tau$ denote the Auslander--Reiten translation in $\leftmod{\Lambda}$. 
In \Cref{sec: Translation quivers}, we include a systematic treatment of translation quivers, of which the pair $(\mathfrak{AR}^{\Lambda},\tau)$ forms an example.

\begin{remark}\label{rem:fielddoesntmatter}
    Let $\Lambda=kQ/I$ be a representation-finite bound path $k$-algebra.
    \begin{enumerate}
    \item\label{rem:fdnm_AuslanderGabriel} We have that $\mathrm{Aus}(\Lambda)$ is Morita equivalent to a bound path $k$-algebra. Its Gabriel quiver is isomorphic to $\mathfrak{AR}^{\Lambda}$ (see \cite[Theorem VII.1.6]{ARS97}).
    \item\label{rem:fdnm_magnitude} Let $M$ be a representation-generator of $\Lambda$ and let $K$ be a field extension of $k$. One applies \eqref{eq:KasjanExt} to show that \[K\otimes_k \End_{\Lambda}(M) \simeq \End_{\Lambda^K}(K\otimes_k M)\] as $K$-algebras. Consequently, we have that $\Aus(\Lambda^K)$ is Morita equivalent to $\Aus(\Lambda)^K$. Since the Auslander--Reiten translation in $\leftmod{\Lambda}$ is compatible with extension of scalars \cite[Corollary 3.6]{Kas00}, it follows from \eqref{rem:fdnm_AuslanderGabriel} and \Cref{rem:CKL_fieldext}\eqref{rem:CKL_fieldext1} that $(\mathfrak{AR}^{\Lambda},\tau)$ and $(\mathfrak{AR}^{\Lambda^K},\tau)$ are isomorphic as translation quivers. Using \Cref{rem:CKL_fieldext}\eqref{rem:CKL_fieldext3}, we also deduce that
    \[\MagnitudeOfModuleCat{\Lambda^K} = \MagnitudeOfModuleCat{\Lambda}.\]
    \end{enumerate}
\end{remark}

There is a one-to-one correspondence between isomorphism classes of Auslander--Reiten sequences in $\leftmod{\Lambda}$ and non-projective indecomposable $\Lambda$-modules. If $\Lambda$ is representation-finite, we obtain
\begin{equation}\label{eq:E}
   E = N-n,
\end{equation}
where $E$ is the number of isomorphism classes of Auslander--Reiten sequences in $\leftmod{\Lambda}$, $N$ is the number of indecomposable $\Lambda$-modules up to isomorphism, and $n$ is the rank of $\Lambda$.

\begin{lemma}\label{lem:MagARQuiver}
    Let $\Lambda$ be a representation-finite $k$-algebra, let $E$, $N$, and $n$ be as above, and let $A$ be the number of arrows in $\mathfrak{AR}^{\Lambda}$. Then we have that
         \begin{equation}\label{eq:MagARQuiver}
    \MagnitudeOfModuleCat{\Lambda} = N - A + E \stackrel{\eqref{eq:E}}{=} 2N - A - n.
\end{equation}
\end{lemma}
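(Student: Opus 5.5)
The plan is to apply \Cref{prop:CKL+Bongartz} to $\Gamma=\Aus(\Lambda)$, and to read off each Ext-dimension in \eqref{eq:CKL} from the Auslander--Reiten quiver. As in the surrounding discussion, take $\Lambda$ to be a representation-finite bound path $k$-algebra. By \Cref{rem:fielddoesntmatter}\eqref{rem:fdnm_AuslanderGabriel}, $\Gamma$ is (up to Morita equivalence, which does not change $\chi$) a bound path $k$-algebra with Gabriel quiver $\mathfrak{AR}^{\Lambda}$. Auslander's theorem gives $\mathrm{gl.dim}\,\Gamma\le 2$. Hence \eqref{eq:CKL} and \eqref{eq:chiindmod} give
\[
\MagnitudeOfModuleCat{\Lambda}=\dim_k\Hom_\Gamma(S,S)-\dim_k\Ext^1_\Gamma(S,S)+\dim_k\Ext^2_\Gamma(S,S).
\]
It remains to identify these three numbers with $N$, $A$ and $E$.

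\emph{Degrees $0$ and $1$.} Since $\Gamma$ is a bound path algebra, every simple $\Gamma$-module has endomorphism ring $k$. The simples are indexed by the $N$ vertices of $\mathfrak{AR}^{\Lambda}$ and are pairwise non-isomorphic, so $\dim_k\Hom_\Gamma(S,S)=N$. By the standard description of the Gabriel quiver, $\dim_k\Ext^1_\Gamma(S_x,S_y)$ is the number of arrows between the vertices $x$ and $y$ (with the appropriate orientation). Summing over all pairs gives $\dim_k\Ext^1_\Gamma(S,S)=A$.

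\emph{Degree $2$.} I would compute minimal projective resolutions of the simple modules explicitly. Using the equivalence of \Cref{rem:chi_indmod}\eqref{rem:chi_indmod1}, indecomposable projective $\Gamma$-modules correspond to the representable functors attached to indecomposable $\Lambda$-modules $X$. The simple module $S_X$ is the top of the projective attached to $X$.
\begin{itemize}
\item If $X$ is projective, then the inclusion $\mathrm{rad}X\to X$ is right almost split. This gives a projective resolution of $S_X$ of length $1$, namely $0\to(\mathrm{rad}X)\to(X)\to S_X\to 0$, where $(Y)$ denotes the projective attached to $Y$.
\item If $X$ is non-projective, take its Auslander--Reiten sequence $0\to\tau X\to Y\to X\to 0$. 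Because $Y\to X$ is right almost split and $\tau X\to Y$ is its kernel, we get an exact resolution $0\to(\tau X)\to(Y)\to(X)\to S_X\to 0$.
\end{itemize}
Both resolutions are minimal, because all maps involved are radical maps: almost split maps are right minimal and irreducible components lie in $\mathrm{rad}\setminus\mathrm{rad}^2$.

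Over a bound path algebra, $\dim_k\Ext^i_\Gamma(S_X,S_Z)$ equals the multiplicity of the projective cover of $S_Z$ in the $i$-th term of a minimal projective resolution of $S_X$. Therefore $\dim_k\Ext^2_\Gamma(S_X,S)$ is $1$ when $X$ is non-projective, coming from the single summand $(\tau X)$, and $0$ otherwise. Summing over all $X$ gives $\dim_k\Ext^2_\Gamma(S,S)=N-n=E$ by \eqref{eq:E}. The same resolutions also recompute degree $1$ as a check: the multiplicity of an indecomposable $Y'$ in $Y$ (respectively in $\mathrm{rad}X$) is the number of arrows $Y'\to X$ in $\mathfrak{AR}^{\Lambda}$.

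Combining the three degrees yields $\MagnitudeOfModuleCat{\Lambda}=N-A+E$, and \eqref{eq:E} turns this into $2N-A-n$.

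I expect the main obstacle to be the bookkeeping of left versus right modules and variance in the projectivisation of \Cref{rem:chi_indmod}\eqref{rem:chi_indmod1}. Dualising may reverse arrow directions, but this changes none of the three counts. One also has to be careful that multiplicities of summands match arrow counts. This uses that the endomorphism rings of indecomposables modulo their radicals are $k$, which holds because $\Gamma$ is a bound path algebra by \Cref{rem:fielddoesntmatter}.
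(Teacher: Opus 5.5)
Your proposal is correct and is essentially the paper's proof: apply \Cref{prop:CKL+Bongartz} to $\Aus(\Lambda)$, then read off the three $\Ext$-dimensions from projective resolutions of the simples built from almost split maps. The differences are cosmetic: you use the dual version (covariant functors, right almost split maps, projective/non-projective cases; this is harmless since passing to $\Aus(\Lambda)^{\mathrm{op}}$ only transposes the similarity matrix), whereas the paper uses $\Hom_\Lambda(-,M)$ with left almost split maps and injective/non-injective cases, and you get vanishing differentials from minimality of the resolutions rather than from Igusa's no-loop theorem.
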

\begin{proof}
    Since Auslander algebras have global dimension at most 2, we use \Cref{prop:CKL+Bongartz} to assert that
    \begin{equation}\label{eq:MagARQuiverCKL}
        \MagnitudeOfModuleCat{\Lambda} = \dim_k\Ext^0_{\Aus(\Lambda)}(S,S) - \dim_k\Ext^1_{\Aus(\Lambda)}(S,S) + \dim_k\Ext^2_{\Aus(\Lambda)}(S,S).
    \end{equation}
    Here, $S$ is a basic direct sum of all simple $\Aus(\Lambda)$-modules up to isomorphism.
    If $M$ is a basic representation-generator of $\Lambda$, we may identify $\Aus(\Lambda)$ with $\End_{\Lambda}(M)$ up to Morita equivalence. By \Cref{rem:chi_indmod}\eqref{rem:chi_indmod1}, a complete set of indecomposable projective $\Aus(\Lambda)$-modules up to isomorphism is given by $\{\Hom_{\Lambda}(C,M)\}_C$. This set is indexed over the isomorphism classes of indecomposable $\Lambda$-modules.
    Letting $S_C$ denote the simple top of $\Hom_{\Lambda}(C,M)$, we get a decomposition $S\simeq \bigoplus_C S_C$.
    Since $\mathrm{Aus}(\Lambda)$ is (Morita equivalent to) a bound path $k$-algebra by \Cref{rem:fielddoesntmatter}\eqref{rem:fdnm_AuslanderGabriel}, we have that 
    $\dim_k\Hom_{\mathrm{Aus}(\Lambda)}(S_C,S)
    = 1.$ This shows that \[\dim_k \Ext^0_{\Aus(\Lambda)}(S,S) = \dim_k \Hom_{\Aus(\Lambda)}(S,S) = |\mathfrak{AR}^{\Lambda}_0| = N.\]
     To conclude the proof, we show that
    \begin{equation}\label{eq:MagARcases1} 
    {\dim_k\Ext^1_{\Aus(\Lambda)}(S_C,S) = |\{\text{arrows out of }C\text{ in }\mathfrak{AR}^{\Lambda}\}|,}
    \end{equation}
    {and that}
    \begin{equation} \label{eq:MagARcases2}
        \dim_k\Ext^2_{\Aus(\Lambda)} (S_C,S) = \begin{cases}
            1 & \text{if } C\text{ is non-injective,} \\
            0 & \text{else}.
        \end{cases}
    \end{equation}
    We distinguish between two cases. If $C$ is non-injective, there exists an Auslander--Reiten sequence of the form
    \begin{equation}\label{eq:ARseq}
    \begin{tikzcd}[ampersand replacement=\&]
        0 \arrow[r] \& C \arrow[r, tail,"(g_i)_i"] \& {\bigoplus\limits_{i=1}^m B_i} \arrow[r, two heads,"(f_i)_i"] \& \tau^{-1}{C} \arrow[r] \& {0}.
    \end{tikzcd}
\end{equation}
    A projective resolution of $S_{C}$ is then given by
        \begin{equation}\label{eq:projres}
       \begin{tikzcd}
     0 \arrow[r] & {\Hom_{\Lambda}(\tau^{-1}C,M)} \arrow[r] & {\bigoplus\limits_{i=1}^m \Hom_{\Lambda}(B_i,M)} \arrow[r] & {\Hom_{\Lambda}(C,M)}\arrow[r,two heads] & S_{C}.
\end{tikzcd}
    \end{equation}
    By \Cref{rem:fielddoesntmatter}\eqref{rem:fdnm_AuslanderGabriel}, $\mathrm{Aus}(\Lambda)$ is a bound path $k$-algebra whose Gabriel quiver is the Auslander--Reiten quiver of $\leftmod{\Lambda}$. Since $\mathrm{Aus}(\Lambda)$ has finite global dimension, it follows that the Auslander--Reiten quiver of $\leftmod{\Lambda}$ has no loops \cite[Corollary 5.6]{Igu90}. In particular, consecutive terms in the projective resolution in \eqref{eq:projres} share no indecomposable direct summand. Thus, applying $\Hom_{\mathrm{Aus}(\Lambda)}(-,S)$ induces trivial differentials. Since the $g_i$s are all the arrows out of $C$ in $\mathfrak{AR}^{\Lambda}$, the formula in \eqref{eq:MagARcases1} follows in this case.
    We also readily deduce \eqref{eq:MagARcases2}.
    {In the remaining case, where} $C$ is injective, a projective resolution of $S_{C}$ is given by 
    \begin{equation*}
        \begin{tikzcd}
 0 \arrow[r] & \Hom_{\Lambda}(C/\mathrm{soc}(C),M) \arrow[r] &
  \Hom_{\Lambda}(C,M)\arrow[r,two heads] & S_{C},
\end{tikzcd}
    \end{equation*}
    where $\mathrm{soc}(C)$ denotes the socle of $C$ in $\leftmod{\Lambda}$. Since the number of indecomposable direct summands of $C/\mathrm{soc(C)}$ is the number of arrows out of $C$ in $\mathfrak{AR}^{\Lambda}$ (see \cite[Remark IV.4.3(b)]{ASS06}), we find that \eqref{eq:MagARcases1} also holds in this case. Moreover, we deduce \eqref{eq:MagARcases2} from the fact that  the functor $\Ext^2_{\mathrm{Aus}(\Lambda)}(S_C,-)$ now vanishes.
\end{proof}

We have reduced the computation of $\MagnitudeOfModuleCat{\Lambda}$ to Auslander--Reiten theory. 

\begin{example}\label{ex:A3}
    Let $Q$ be the following quiver:
    \[
    \begin{tikzcd}
        1 \arrow[r,"a"] & 2 \arrow[r,"b"] & 3.
    \end{tikzcd}
    \]
    The Auslander--Reiten quiver of $\leftmod{kQ}$ has the form
   \[ \begin{tikzpicture}[x=0.035cm,y=-0.025cm]
  \clip (0,345) rectangle + (240,140);
  \begin{scope}[every node/.style={inner sep=4pt,font=\footnotesize}]
   \node (1_0) at (120,377) {$\begin{matrix} 1 \\ 2 \\ 3 \end{matrix}$};
   \node (2_0) at (70,427) {$\begin{matrix}  2 \\ 3 \end{matrix}$};
   \node (3_0) at (20,477) {$3$};
   \node (3_1) at (120,477) {$2$};
   \node (2_1) at (170,427) {$\begin{matrix}  1 \\ 2 \end{matrix}$};
   \node (3_2) at (220,477) {$1$};
  \end{scope}
  \begin{scope}[every node/.style={fill=white,font=\scriptsize},every path/.style={-{Latex[length=1.5mm,width=1mm]}}]
   \path (3_0) edge (2_0);
   \path (3_1) edge[dashed, "$\tau$"'] (3_0);
   \path (2_0) edge (3_1);
   \path (2_0) edge (1_0);
   \path (2_1) edge[dashed, "$\tau$"'] (2_0);
   \path (3_1) edge (2_1);
   \path (1_0) edge (2_1);
   \path (3_2) edge[dashed, "$\tau$"'] (3_1);
   \path (2_1) edge (3_2);
  \end{scope}
 \end{tikzpicture}\] 
 where we have labelled the vertices by composition series of indecomposable $kQ$-modules. Using \Cref{lem:MagARQuiver}, one finds that $\MagnitudeOfModuleCat{kQ}=3$.
\end{example}

\begin{remark}
    Given an integer $n\geq 2$, one can find representation-finite $k$-algebras of rank $n$ with a module category of arbitrarily large magnitude in the following way. By \Cref{rem:chi_indmod}\eqref{Splitting formula}, it suffices to give a suitable family of examples for $n=2$, because we may then consider a product with a semisimple $k$-algebra. 
    Let $\omega$ be an even positive integer and let $\Lambda_{\omega}$ denote the bound path $k$-algebra given by the quiver
    \begin{equation*}
        \begin{tikzcd}
1 \arrow["b_1"', loop, distance=2em, in=125, out=55] &  & 2 \arrow[ll, "a"'] \arrow["b_2"', loop, distance=2em, in=125, out=55]
\end{tikzcd}
    \end{equation*}
    modulo the relations $b_1a - ab_2$, $b_1^{\omega}$, and $b_2^{2}$. The Auslander--Reiten quiver of $\leftmod{\Lambda_\omega}$ has ${(\omega+1)(\omega+4)\over 2}$ vertices and {$\omega(\omega+4)+2$} arrows (see \cite[Example 6 on p. 370]{BG82}). By \Cref{lem:MagARQuiver}, we have $\MagnitudeOfModuleCat{\Lambda_{\omega}} = \omega$. We will return to a discussion of upper bounds in \Cref{prop:UpperBoundT}.
\end{remark}

\subsection{First properties}

In \eqref{eq:ARseq}, we displayed an arbitrary Auslander--Reiten sequence where the middle term is a direct sum of $m$ indecomposable modules. In this case, we say that this Auslander--Reiten sequence \textit{has $m$ middle terms}.
From now on, let $\mathcal{E}_m(\Lambda)$ denote the set of isomorphism classes of Auslander--Reiten sequences in $\leftmod{\Lambda}$ with exactly $m$ middle terms. 

\begin{lemma}[Four in the middle \cite{BB83}]\label{lem:4inthemiddle}
    Let $\Lambda$ be a representation-finite $k$-algebra. We have that $\mathcal{E}_m(\Lambda)$ is empty for $m > 4$. In other words, every Auslander--Reiten sequence in $\leftmod{\Lambda}$ has at most four middle terms.
\end{lemma}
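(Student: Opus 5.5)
This is the classical ``four in the middle'' theorem of Bautista--Brenner, so the plan is to reproduce their argument. The tools are additive functions on the Auslander--Reiten quiver and the subadditive length functions that representation-finiteness provides.

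\textbf{Setup.} Fix an Auslander--Reiten sequence as in \eqref{eq:ARseq},
\[0 \to C \to \bigoplus_{i=1}^m B_i \to \tau^{-1}C \to 0,\]
with each $B_i$ indecomposable, and suppose for contradiction that $m\geq 5$.

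\textbf{Step 1: a subadditive function.} Since $\Lambda$ is representation-finite, the Auslander--Reiten quiver $\mathfrak{AR}^{\Lambda}$ is finite and connected on each block. I would take $\ell(X)=\dim_k X$, or more flexibly the length of $\Hom_\Lambda(P,X)$ for a suitable projective $P$. Exactness of the Auslander--Reiten sequence gives
\[\ell(C)+\ell(\tau^{-1}C)=\sum_i \ell(B_i),\]
and along every mesh $\ell$ is additive.

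\textbf{Step 2: the sectional paths.} The $B_i$ are pairwise non-isomorphic; otherwise the irreducible maps would already give a contradiction for $m\geq 5$ via multiplicities. Each $B_i$ lies on a sectional path through $C\to B_i\to\tau^{-1}C$. Following Bautista--Brenner, I would consider the sectional paths emanating from $C$ through each $B_i$. The key input is Igusa--Todorov's result that the radical-layer compositions along sectional paths are nonzero. Combined with Harada--Sai finiteness, this forces these paths to terminate, since the category is representation-finite.

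\textbf{Step 3: the counting contradiction.} Apply $\tau^{-1}$ repeatedly and track
\[\delta(X)=\sum_{X\to Y}\ell(Y)-\ell(X)-\ell(\tau^{-1}X).\]
With $m\geq 5$ middle terms, the additive function on the neighbourhood of $C$ in $\mathfrak{AR}^{\Lambda}$ grows. Locally the quiver looks like a wild (non-Dynkin, non-Euclidean) star, namely a star with at least $5$ arms, and its Cartan/Tits form is indefinite. The contradiction then comes from Vinberg's criterion: a positive subadditive, not additive, function exists only on Dynkin or Euclidean graphs. A star with $5$ arms is neither, so a finite positive subadditive function cannot exist on that component. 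This contradicts the existence of $\ell$, which is positive on all vertices.

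\textbf{Main obstacle.} The hard step is Step 3: passing from the local mesh at $C$ to a global subgraph on which Vinberg/Happel--Preiser--Ringel applies. This requires the full structure of stable components, which is exactly the content of \cite{BB83}. In the paper I would simply cite \cite{BB83}, and for bound path algebras Liu's sectional-path results, rather than reprove it.
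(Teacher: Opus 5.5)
The paper gives no argument for this lemma; it is simply quoted from \cite{BB83}. Your closing decision to cite \cite{BB83} therefore coincides with what the paper does, and as a citation the proposal is fine. Steps 1--3, however, are not a valid reconstruction, and the real difficulty is not where you place it.

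\textbf{Step 3 does not work.} Since $\ell=\dim_k$ is additive on short exact sequences, the Auslander--Reiten sequence starting at any non-injective $X$ gives $\sum_{X\to Y}\ell(Y)=\ell(X)+\ell(\tau^{-1}X)$ (counted with multiplicities). So your $\delta$ vanishes identically on non-injective vertices and detects no growth. The Vinberg/Happel--Preiser--Ringel criterion needs a positive function $f$ on the vertices of a graph with $2f(x)\geq\sum_{y\text{ adjacent to }x}f(y)$ at \emph{every} vertex. The mesh at $C$ only gives $\ell(C)+\ell(\tau^{-1}C)=\sum_i\ell(B_i)$, which relates two different vertices and says nothing at the $B_i$. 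So nothing in the sketch produces such a function on the five-armed star.

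\textbf{Where the actual difficulty lies.} The way to produce such a function is to pass to a stable component. Its vertices are $\tau$-periodic because $\mathfrak{AR}^{\Lambda}$ is finite, and summing $\ell$ over $\tau$-periods gives a subadditive function on the tree class. One gets $\Z T/G$ with $T$ Dynkin, as in \Cref{thm:Rie80}\eqref{thm:Rie80Haupt}, so a mesh all of whose vertices are stable has at most three middle terms. That is the easy case (Riedtmann's theorem), not the content of \cite{BB83}.

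Every mesh with four or more middle terms therefore contains a non-stable vertex. Its $\tau$-orbit is a finite chain running from a projective to an injective, so the mesh relations break off at the ends, and summing over orbits no longer yields a subadditive function. Bounding the number of such middle terms is the real work, and your sketch has nothing for it.

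This case is not vacuous. Take the bound path algebra with arrows $a\to b$, $a\to c$, $a\to d$, $b\to e$, $c\to e$, $d\to e$, with the three paths from $a$ to $e$ identified. It is representation-finite, since modulo the socle of $U=P_a$ it is radical square zero with separating quiver $D_4\sqcup D_4\sqcup A_1\sqcup A_1$. Here $U$ is projective-injective, and \Cref{lem:DK} yields the Auslander--Reiten sequence $0\to\mathrm{rad}(U)\to U\oplus S_b\oplus S_c\oplus S_d\to U/\mathrm{soc}(U)\to 0$, which has four middle terms.

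\textbf{Auxiliary errors.} The $B_i$ need not be pairwise non-isomorphic. The lemma is stated over an arbitrary field, and for the representation-finite $\mathbb{R}$-algebra $\begin{pmatrix}\mathbb{R}&0\\ \mathbb{C}&\mathbb{C}\end{pmatrix}$ (a species of type $B_2$), the Auslander--Reiten sequence starting at the simple projective module has middle term $P\oplus P$, where $P$ is the other indecomposable projective. Since $m$ counts summands with multiplicity, repetitions (valued arrows) must be handled, not dismissed by an unspecified ``contradiction via multiplicities''. The path $C\to B_i\to\tau^{-1}C$ is by definition not sectional, as $\tau(\tau^{-1}C)=C$. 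Finally, the Igusa--Todorov/Harada--Sai bound on lengths of sectional paths is true but is never used in Step 3.
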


We use \Cref{lem:4inthemiddle} to rewrite our formula for the magnitude of the module category. Recall that a bound path $k$-algebra $\Lambda = kQ/I$ is \textit{monomial} if the ideal $I$ can be generated by paths in $Q$.
\begin{lemma}\label{formula with 1-meshes and 3-meshes}
	Let $\Lambda=kQ/I$ be a representation-finite bound path $k$-algebra of rank $n$. We then have that 
	\begin{equation*}
	\MagnitudeOfModuleCat{\Lambda}  = n - \ell + |\mathcal{E}_1(\Lambda)| - |\mathcal{E}_3(\Lambda)| - 2|\mathcal{E}_4(\Lambda)|,
	\end{equation*}
	where $\ell$ is the number of arrows in the Auslander--Reiten quiver whose codomain is a projective $\Lambda$-module. If $\Lambda$ is monomial, we have $\ell = |Q_1|$. In this case
    \begin{equation*}
	\MagnitudeOfModuleCat{\Lambda} =  n - |Q_1| + |\mathcal{E}_1(\Lambda)| - |\mathcal{E}_3(\Lambda)| -2|\mathcal{E}_4(\Lambda)|.
	\end{equation*}
	\end{lemma}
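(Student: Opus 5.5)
We start from \Cref{lem:MagARQuiver}, $\MagnitudeOfModuleCat{\Lambda} = N - A + E$, and count the arrows $A$ of $\mathfrak{AR}^{\Lambda}$ by their codomains. Every arrow $B\to C$ either ends at a projective vertex or ends at a non-projective vertex $C$. In the latter case there is an Auslander--Reiten sequence $0\to\tau C\to \bigoplus_{i=1}^m B_i\to C\to 0$ whose middle term has $m$ indecomposable summands. Its right-hand map lists all irreducible maps into $C$. Since $\Lambda$ is a bound path algebra, \Cref{rem:fielddoesntmatter}\eqref{rem:fdnm_AuslanderGabriel} applies, so the Gabriel quiver of $\Aus(\Lambda)$ is $\mathfrak{AR}^{\Lambda}$. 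Hence the number of arrows into $C$ is exactly $m$, the number of indecomposable middle terms counted with multiplicity.

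The key identity is therefore
\[ A = \ell + \sum_{m\geq 1} m\,|\mathcal{E}_m(\Lambda)|, \]
using that non-projective indecomposables correspond bijectively to Auslander--Reiten sequences. By \Cref{lem:4inthemiddle}, the sum stops at $m=4$. Also $E=\sum_{m=1}^4 |\mathcal{E}_m(\Lambda)|$ and $N = n + E$ by \eqref{eq:E}. Substituting gives
\[ \MagnitudeOfModuleCat{\Lambda} = n + 2E - \ell - \sum_{m=1}^4 m|\mathcal{E}_m(\Lambda)| = n - \ell + \sum_{m=1}^4 (2-m)|\mathcal{E}_m(\Lambda)|. \]
This is the first formula: the $m=2$ term drops out, and the coefficients $1,-1,-2$ remain.

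For the monomial case we must show $\ell = |Q_1|$. The arrows of $\mathfrak{AR}^{\Lambda}$ ending at an indecomposable projective $P_x$ correspond to the indecomposable summands of $\mathrm{rad}(P_x)$, since the sink map into a projective is the inclusion of its radical (\cite[IV.3]{ASS06}). So we need $\sum_x (\text{number of indecomposable summands of } \mathrm{rad} P_x) = |Q_1|$. For a monomial algebra, $\mathrm{rad} P_x = \bigoplus_{a\colon x\to y} \Lambda a$. Each $\Lambda a$ is isomorphic to $P_y/\Lambda p$, where $p$ runs over the paths from $y$ whose composite with $a$ lies in $I$. Because $I$ is generated by paths, the sum decomposes along arrows, and each $\Lambda a$ is a cyclic module with simple top $S_y$, hence indecomposable and nonzero. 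So $\mathrm{rad}P_x$ has exactly as many indecomposable summands as there are arrows starting at $x$, and summing over $x$ gives $\ell=|Q_1|$.

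The main obstacle is this last step. One has to justify carefully that $\mathrm{rad}P_x$ splits as the direct sum of the $\Lambda a$ for monomial relations, with sums of paths never creating overlaps. One also needs the multiplicity statement, valid here because of the bound path setting: irreducible morphisms between $X$ and $Y$ contribute exactly $\dim \mathrm{Irr}(X,Y)$ arrows, matching multiplicities of summands in the sink map. The rest is bookkeeping.
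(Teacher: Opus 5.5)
Your proof is correct and follows the paper's argument: you count $A = \ell + \sum_{m=1}^{4} m\,|\mathcal{E}_m(\Lambda)|$ by codomains and substitute $N = n + E$ into $\MagnitudeOfModuleCat{\Lambda} = N - A + E$, which is exactly the paper's computation. For the monomial case, the paper uses the coefficient quiver of $\mathrm{rad}(\Lambda)$ with respect to the basis of nonzero paths, whose $|Q_1|$ connected components are precisely the path bases of your summands $\Lambda a$; your decomposition $\mathrm{rad}(P_x)=\bigoplus_{a} \Lambda a$ is therefore the same argument, with one step made explicit, namely that each $\Lambda a$ is a nonzero quotient of $P_y$ and hence local and indecomposable, which the paper leaves to its citation of Ringel's Property 1.
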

\begin{proof}
	By \Cref{lem:4inthemiddle}, we have that $\mathcal{E}_m(\Lambda)$ is empty for $m > 4$. Counting the number of arrows into each vertex of the Auslander--Reiten quiver, we find that
    \begin{align*}
    A &= \ell + |\mathcal{E}_1(\Lambda)|  + 2|\mathcal{E}_2(\Lambda)|  + 3|\mathcal{E}_3(\Lambda)| + 4|\mathcal{E}_4(\Lambda)| \\
    &= \ell + 2(|\mathcal{E}_1(\Lambda)| + |\mathcal{E}_2(\Lambda)| + |\mathcal{E}_3(\Lambda)| + |\mathcal{E}_4(\Lambda)|) - |\mathcal{E}_1(\Lambda)| + |\mathcal{E}_3(\Lambda)| + 2|\mathcal{E}_4(\Lambda)|.
    \end{align*}
    By \eqref{eq:E}, the quantity $|\mathcal{E}_1(\Lambda)| + |\mathcal{E}_2(\Lambda)| + |\mathcal{E}_3(\Lambda)| + |\mathcal{E}_4(\Lambda)|=E$ is equal to $N-n$.
    Substitution gives that    
    \begin{equation*}
    A = \ell + 2(N-n) - |\mathcal{E}_1(\Lambda)| + |\mathcal{E}_3(\Lambda)| + 2|\mathcal{E}_4(\Lambda)|.
    \end{equation*}
    Finally, the formula in \Cref{lem:MagARQuiver} gives
    \begin{equation*}
    	\MagnitudeOfModuleCat{\Lambda} = 2N - A - n = n - \ell + |\mathcal{E}_1(\Lambda)| - |\mathcal{E}_3(\Lambda)| - 2|\mathcal{E}_4(\Lambda)|,
    \end{equation*}
    as claimed. 
    
    To prove the remaining assertions, it suffices to show that $\ell = |Q_1|$ when $\Lambda$ is monomial. It is well-known that $\ell$ is equal to the number indecomposable direct summands of $\mathrm{rad}(\Lambda)$ up to isomorphism (see \cite[Remark IV.4.3(a)]{ASS06}). 
    In the following, we will regard $\mathrm{rad}(\Lambda)$ as a $kQ$-module, which will not affect its number of indecomposable direct summands. 
    Since $\Lambda$ is monomial, a $k$-basis of the $kQ$-module $\mathrm{rad}(\Lambda)$ is given by
    \[B\coloneqq \{p \,:\, \text{$p$ path of length $\geq 1$ in $Q$ such that $p\not\in I$} \}.\]
    Let $C$ denote the coefficient quiver of $\mathrm{rad}(\Lambda)$ with respect to $B$ (see \cite{Rin98}).
    The vertices of $C$ are the elements in $B$ and there is an arrow $p \xrightarrow{a} q$ whenever $q=ap$, where $a\in Q_1$. The number of connected components of $C$ is $|Q_1|$. On the other hand, this is also the number of indecomposable direct summands of $\mathrm{rad}(\Lambda)$ \cite[Property 1]{Rin98}.
\end{proof}

We conclude this section with an important result, which is very useful when computing the magnitude of module categories in practice. It also plays a significant role in the proofs of \Cref{thm:SpecialBiserial,thm:Self-injective}.

\begin{lemma}[Drozd--Kiri\v{c}enko Rejection Lemma \cite{DK72}]\label{lem:DK}
    Let $\Lambda=kQ/I$ be a bound path $k$-algebra and let $U$ be a non-simple indecomposable projective-injective module. Then the socle $\mathrm{soc}(U)$ is a two-sided ideal of $\Lambda$ \cite[Lemma 4.3]{AR80}. Let $\mathfrak{AR}^{\Lambda}$ denote the Auslander--Reiten quiver of $\leftmod{\Lambda}$ and let $\mathfrak{AR}^{\Lambda/\mathrm{soc}(U)}$ denote that of the quotient $k$-algebra $\Lambda/\mathrm{soc}(U)$. 
    The following assertions hold.
    \begin{enumerate}
        \item\label{lem:DK_ind} We have 
        $\mathfrak{AR}^{\Lambda/\mathrm{soc}(U)}_0 = \mathfrak{AR}^{\Lambda}_0 \setminus \{U\}$.
        \item\label{lem:DK_arrows} 
        The arrows $\mathrm{rad}(U) \xrightarrow{q_1} U$ and $U \xrightarrow{q_2} U/\mathrm{soc}(U)$ are the only arrows in $\mathfrak{AR}^{\Lambda}$ that are incident to $U$. It follows from \eqref{lem:DK_ind} that $\mathfrak{AR}^{\Lambda/\mathrm{soc}(U)}_1 = \mathfrak{AR}^{\Lambda}_1 \setminus \{q_1,q_2\}$. 
        \item\label{lem:DK_seq} Any Auslander--Reiten sequence in $\leftmod{\Lambda/\mathrm{soc}(U)}$ is an Auslander--Reiten sequence in $\leftmod{\Lambda}$. Up to isomorphism, exactly one Auslander--Reiten sequence in $\leftmod{\Lambda}$ is not an Auslander--Reiten sequence in $\leftmod{\Lambda/\mathrm{soc}(U)}$. It is of the form 
        \begin{equation}\label{eq:DKseq}
            \begin{tikzcd}[column sep=4em,ampersand replacement=\&]
                0 \arrow[r] \& \mathrm{rad}(U) \arrow[r,"{\begin{pmatrix}
                    q_1 \\ f
                \end{pmatrix}}"] \& U\oplus X \arrow[r,"{\begin{pmatrix}
                    q_2 & -g
                \end{pmatrix}}"] \& U/\mathrm{soc}(U)\arrow[r] \& 0,
            \end{tikzcd}
        \end{equation}
        where $X$ need not be indecomposable.
        \item\label{lem:DK_newproj_inj} The $\Lambda$-module $U/\mathrm{soc}(U)$ is an indecomposable projective $\Lambda/\mathrm{soc}(U)$-module.
    \end{enumerate}
\end{lemma}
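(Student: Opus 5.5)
The plan is to prove each part of \Cref{lem:DK} from standard Auslander--Reiten theory, using that $U$ is simultaneously projective and injective. Write $\bar\Lambda=\Lambda/\mathrm{soc}(U)$. Since $\mathrm{soc}(U)$ is a two-sided ideal by \cite[Lemma 4.3]{AR80}, $\leftmod{\bar\Lambda}$ is the full subcategory of $\Lambda$-modules $X$ with $\mathrm{soc}(U)X=0$.

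\emph{Step 1: an indecomposable $\Lambda$-module $X\not\simeq U$ is annihilated by $\mathrm{soc}(U)$.} Since $U=\Lambda e$ for a primitive idempotent $e$, we have $\mathrm{soc}(U)=\mathrm{soc}(U)e$. Suppose $\mathrm{soc}(U)x\neq 0$ for some $x\in X$. Then $ex\neq 0$, and the morphism $\varphi\colon U\to X$, $\lambda e\mapsto \lambda ex$, does not vanish on the simple socle of $U$, so $\varphi$ is injective. Since $U$ is injective, $\varphi$ splits, so $X\simeq U$ by indecomposability. Conversely, $U$ is not a $\bar\Lambda$-module, because $\mathrm{soc}(U)U\neq 0$: otherwise $\mathrm{soc}(U)\subseteq \mathrm{soc}(U)e=\mathrm{soc}(U)\cdot e\in \mathrm{soc}(U)U=0$. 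This proves \eqref{lem:DK_ind}.

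\emph{Step 2: arrows at $U$ and the exceptional sequence.} Since $U$ is projective, the only arrows into $U$ come from the indecomposable summands of $\mathrm{rad}(U)$. Since $U$ is injective, the only arrows out of $U$ go to the summands of $U/\mathrm{soc}(U)$. Both modules are indecomposable because $U$ is uniserial at its top and socle ends: $U$ has a simple top, so $\mathrm{rad}(U)$ is the unique maximal submodule, and $\mathrm{soc}(U)$ is simple. Dually, $U/\mathrm{soc}(U)$ has simple top. I still need to argue that $\mathrm{rad}(U)$ is indecomposable. The cleanest route is to avoid this and instead use the standard fact that the AR sequence starting at $\mathrm{rad}(U)$ contains $U$. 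Take the almost split sequence ending in $U/\mathrm{soc}(U)$ (which is non-projective as it is a proper quotient of the projective cover). The inclusion $\mathrm{rad}(U)\to U$ followed by the projection to $U/\mathrm{soc}(U)$ gives the sequence \eqref{eq:DKseq}, with $X=\mathrm{rad}(U)/\mathrm{soc}(U)$ for the pullback construction. The standard argument (e.g.\ \cite[Proposition V.5.3]{ARS97}) shows that this sequence is almost split. In particular $\mathrm{rad}(U)=\tau(U/\mathrm{soc}(U))$ is indecomposable, and the arrows $q_1,q_2$ are the only ones incident to $U$. This gives \eqref{lem:DK_arrows}.

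\emph{Step 3: comparison of almost split sequences.} Let $0\to A\to B\to C\to 0$ be an almost split sequence in $\leftmod{\bar\Lambda}$. If $B$ has no summand $U$, then every $\Lambda$-module mapping non-split to $C$ is either a $\bar\Lambda$-module or has $U$-summands. Maps from $U$ to $C$ factor through $U/\mathrm{soc}(U)$ unless they are injective, and $U\not\hookrightarrow C$ since $C$ is a $\bar\Lambda$-module. One must then check that $U/\mathrm{soc}(U)\to C$ lifts, using the right almost split property in $\bar\Lambda$. This works unless $C\simeq U/\mathrm{soc}(U)$, where the identity does not lift. This shows the sequence stays almost split in $\leftmod{\Lambda}$ except possibly at $C=U/\mathrm{soc}(U)$.

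Counting then finishes \eqref{lem:DK_seq}. The non-projective indecomposable $\Lambda$-modules are those of $\bar\Lambda$ together with $U/\mathrm{soc}(U)$, which is projective over $\bar\Lambda$, being $\bar\Lambda e$. This gives \eqref{lem:DK_newproj_inj} and shows that the exceptional sequence is exactly \eqref{eq:DKseq}.

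The main obstacle is the lifting argument in Step 3. There I would use that any non-split map $Y\to C$ with $Y$ indecomposable either has $Y$ a $\bar\Lambda$-module, or has $Y=U$, in which case it factors through $q_2$.
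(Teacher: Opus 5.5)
Your proof is correct. The paper gives no proof of \Cref{lem:DK}: it quotes the result from \cite{DK72} and takes the two-sidedness of $\mathrm{soc}(U)$ from \cite[Lemma 4.3]{AR80}. So your argument is a self-contained derivation in place of a citation, not a variant of something in the text.

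What your route buys is transparency about where the hypotheses on $U$ enter. Step 1 shows that if $\mathrm{soc}(U)$ acts nontrivially on an indecomposable $X$, then there is a map $U=\Lambda e\to X$ that is injective on the simple socle. This map splits because $U$ is injective. Step 3, together with the count of non-projective indecomposables, then reduces \eqref{lem:DK_seq} to a single known fact. That fact is the standard almost split sequence $0\to\mathrm{rad}(U)\to U\oplus \mathrm{rad}(U)/\mathrm{soc}(U)\to U/\mathrm{soc}(U)\to 0$ for a non-simple indecomposable projective-injective module (see e.g.\ \cite{ARS97,ASS06}). Like the paper, you still need \cite{AR80} to know that $\bar\Lambda$ is an algebra.

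A few loose ends should be tied up when you write it out.

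First, $\mathrm{rad}(U)$ is indecomposable simply because it is a nonzero submodule of $U$, and so has simple socle $\mathrm{soc}(U)$. No detour is needed. The sequence \eqref{eq:DKseq} comes from the bicartesian square formed by $\mathrm{rad}(U)\hookrightarrow U$ and $\mathrm{rad}(U)\twoheadrightarrow\mathrm{rad}(U)/\mathrm{soc}(U)$. Your sentence about ``the inclusion followed by the projection'' is muddled on this point.

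Second, prove \eqref{lem:DK_newproj_inj} before Step 3. Also record that $\bar\Lambda e_i=\Lambda e_i$ for the other primitive idempotents $e_i\neq e$, which holds because $\mathrm{soc}(U)\subseteq\Lambda e$. Then the exceptional case $C\simeq U/\mathrm{soc}(U)$ in Step 3 is visibly vacuous: $U/\mathrm{soc}(U)=\bar\Lambda e$ is $\bar\Lambda$-projective, so it never ends an almost split sequence in $\leftmod{\bar\Lambda}$. The same fact justifies your description of the non-projective indecomposable $\Lambda$-modules in the final count.

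Third, you only prove the first half of \eqref{lem:DK_arrows}. The equality $\mathfrak{AR}^{\bar\Lambda}_1=\mathfrak{AR}^{\Lambda}_1\setminus\{q_1,q_2\}$ needs one more line: compare the minimal right almost split maps into each indecomposable $\bar\Lambda$-module.
\begin{itemize}
\item For $\bar\Lambda$-non-projective targets, they agree by Step 3.
\item For $\bar\Lambda e_i=\Lambda e_i$, they agree trivially.
\item For $\bar\Lambda e=U/\mathrm{soc}(U)$, the map in $\leftmod{\bar\Lambda}$ is the inclusion of $\mathrm{rad}(U)/\mathrm{soc}(U)=X$. This is the middle term of \eqref{eq:DKseq} with $U$ deleted.
\end{itemize}
So exactly $q_2$ disappears, together with $q_1$ into $U$.
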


\begin{example}\label{ex:A3cont}
Let $Q$ be as in \Cref{ex:A3}, and consider the indecomposable projective-injective $kQ$-module $U=\begin{smallmatrix} 1 \\ 2 \\ 3 \end{smallmatrix}$. The ideal $\mathrm{soc}(U)$ is generated by the path $ba$. Hence, the Auslander--Reiten quiver of $\leftmod{kQ/\mathrm{soc}(U)}$ is 
   \[ \begin{tikzpicture}[x=0.035cm,y=-0.025cm]
  \clip (0,410) rectangle + (240,75);
  \begin{scope}[every node/.style={inner sep=4pt,font=\footnotesize}]
   \node (2_0) at (70,427) {$\begin{matrix}  2 \\ 3 \end{matrix}$};
   \node (3_0) at (20,477) {$3$};
   \node (3_1) at (120,477) {$2$};
   \node (2_1) at (170,427) {$\begin{matrix}  1 \\ 2 \end{matrix}$};
   \node (3_2) at (220,477) {$1$};
  \end{scope}
  \begin{scope}[every node/.style={fill=white,font=\scriptsize},every path/.style={-{Latex[length=1.5mm,width=1mm]}}]
   \path (3_0) edge (2_0);
   \path (3_1) edge[dashed, "$\tau$"'] (3_0);
   \path (2_0) edge (3_1);
   \path (3_1) edge (2_1);
   \path (3_2) edge[dashed, "$\tau$"'] (3_1);
   \path (2_1) edge (3_2);
  \end{scope}
 \end{tikzpicture}\] 
 \end{example}

The following statement is a consequence of \Cref{lem:DK} and \Cref{lem:MagARQuiver}.
\begin{lemma}\label{lem:DK_magnitude}
    Let $\Lambda=kQ/I$ be a bound path $k$-algebra of rank $n$ and let $U$ be a basic projective-injective $\Lambda$-module without simple direct summands. Then $\MagnitudeOfModuleCat{\Lambda} = \MagnitudeOfModuleCat{\Lambda / \mathrm{soc}(U)}$. Moreover, the rank of $\Lambda/\mathrm{soc}(U)$ is $n$.
\end{lemma}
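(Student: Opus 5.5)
Implicitly $\Lambda$ is representation-finite, since otherwise $\MagnitudeOfModuleCat{\Lambda}$ is undefined; then $\Lambda/\mathrm{soc}(U)$ is also representation-finite, as a quotient. The plan is to reduce to the case where $U$ is indecomposable by induction on the number of indecomposable summands of $U$, and then to compare the two sides of \Cref{lem:MagARQuiver}, in the form $N - A + E$, using \Cref{lem:DK}.

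\textbf{Indecomposable case.} Let $U$ be a non-simple indecomposable projective-injective module and put $\Lambda' = \Lambda/\mathrm{soc}(U)$.
\begin{enumerate}
\item By \Cref{lem:DK}\eqref{lem:DK_ind}, $N' = N - 1$.
\item By \Cref{lem:DK}\eqref{lem:DK_arrows}, $A' = A - 2$.
\item By \Cref{lem:DK}\eqref{lem:DK_seq}, the Auslander--Reiten sequences of $\leftmod{\Lambda'}$ are exactly those of $\leftmod{\Lambda}$ minus the sequence \eqref{eq:DKseq}, so $E' = E - 1$.
\end{enumerate}
Substituting these into $N - A + E$ gives
\[
\MagnitudeOfModuleCat{\Lambda'} = (N-1) - (A-2) + (E-1) = \MagnitudeOfModuleCat{\Lambda}.
\]

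For the rank, I would use that the Gabriel quiver of $\Lambda'$ has the same vertex set as that of $\Lambda$. Indeed, $\mathrm{soc}(U)$ lies in the radical of $\Lambda$ because $U$ is non-simple, so $\Lambda'$ is again a bound path algebra with $n$ simple modules. A cross-check comes from \eqref{eq:E}: we have $E' = N' - n'$, so $n' = (N-1) - (E-1) = n$. In the same spirit, \Cref{lem:DK}\eqref{lem:DK_newproj_inj} says that the lost projective $U$ is replaced by the new projective $U/\mathrm{soc}(U)$.

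\textbf{Induction.} Write $U = U_1 \oplus \cdots \oplus U_r$ with the $U_i$ pairwise non-isomorphic, since $U$ is basic. The key steps are as follows.
\begin{enumerate}
\item The two-sided ideal $\mathrm{soc}(U)$ equals $\sum_i \mathrm{soc}(U_i)$.
\item For $i \geq 2$, the image of $U_i$ in $\Lambda/\mathrm{soc}(U_1)$ is $U_i$ itself. It remains a non-simple indecomposable projective-injective module over the quotient.
\item The socle of $U_i$ computed over the quotient agrees with $\mathrm{soc}(U_i)$.
\end{enumerate}
Quotienting successively then yields $\Lambda/\mathrm{soc}(U)$, with magnitude and rank preserved at each stage.

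\textbf{Main obstacle.} The expected difficulty is the inductive bookkeeping, namely showing that $\mathrm{soc}(U_1)$ annihilates $U_i$ for $i \neq 1$. I would argue this from the fact that $\mathrm{soc}(U_1)$ is a two-sided ideal contained in $U_1 = \Lambda e_1$, where $e_1$ is the primitive idempotent with $U_1 \simeq \Lambda e_1$.
\begin{itemize}
\item Since $\mathrm{soc}(U_1) \subseteq \Lambda e_1$, every element of it is fixed by right multiplication by $e_1$.
\item By the Drozd--Kiričenko/\cite{AR80} description, the ideal is of the form $e_s\Lambda e_1$, with $e_s$ the socle idempotent of $U_1$. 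Its left annihilating behaviour on $\Lambda e_i$ then comes from $e_1 \Lambda e_i \subseteq \mathrm{rad}(\Lambda)$ for $i \neq 1$: the product $e_s\Lambda e_1 \cdot \mathrm{rad}(\Lambda) = 0$ because it lies in the socle times the radical.
\item Hence $\mathrm{soc}(U_1) \cdot \Lambda e_i = 0$, so $U_i$ is indeed a $\Lambda/\mathrm{soc}(U_1)$-module.
\end{itemize}

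Projectivity of $U_i$ over the quotient holds because $U_i$ is the direct summand $\Lambda e_i$ of the quotient algebra. Injectivity holds because a module annihilated by an ideal that is injective over $\Lambda$ stays injective over the quotient. The module remains non-simple and indecomposable, and its socle is unchanged, as required in steps 2 and 3.
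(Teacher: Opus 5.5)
Your overall argument is the paper's. You induct on the number of indecomposable summands of $U$. The base case comes from inserting $N'=N-1$, $A'=A-2$, $E'=E-1$ from \Cref{lem:DK} into \Cref{lem:MagARQuiver}, and the inductive step quotients out one socle at a time. Your rank argument is fine and slightly more direct than the paper's count ``lose the projective $U$, gain the projective $U/\mathrm{soc}(U)$'': since $U$ is non-simple, $\mathrm{soc}(U)\subseteq\mathrm{rad}(\Lambda)$, so the quotient has the same simple modules.

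The paragraph you single out as the main obstacle, however, rests on two false claims.

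First, $\mathrm{soc}(U_1)$ is not $e_s\Lambda e_1$ in general. Take the self-injective Nakayama algebra given by the cyclic quiver $1\xrightarrow{a}2\xrightarrow{b}1$ modulo all paths of length $3$, with $U=\Lambda e_1\oplus\Lambda e_2$. Then $\mathrm{soc}(\Lambda e_1)=k\,ba$, whereas $e_1\Lambda e_1$ is spanned by $e_1$ and $ba$.

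Second, ``socle times radical is zero'' mixes up sides. The left socle is annihilated by $\mathrm{rad}(\Lambda)$ acting from the left, not from the right. In the same example, $e_1\cdot b=b\neq 0$ with $e_1\in e_s\Lambda e_1$ and $b\in e_1\Lambda e_2\subseteq\mathrm{rad}(\Lambda)$. So your middle bullet does not establish $\mathrm{soc}(U_1)\cdot\Lambda e_i=0$.

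The conclusion is nonetheless true, and it follows at once from your first bullet together with two-sidedness. Put $J=\mathrm{soc}(U_1)$. It is a two-sided ideal with $J=Je_1$, hence $J\Lambda e_i\subseteq Je_i=Je_1e_i=0$ for $i\neq 1$.

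The paper avoids idempotents here altogether. By \Cref{lem:DK}\eqref{lem:DK_ind} and \eqref{lem:DK_seq}, each $U_i$ with $i\geq 2$ is an indecomposable $\Lambda/\mathrm{soc}(U_1)$-module that is neither the first nor the last term of any Auslander--Reiten sequence there. It is therefore projective-injective, and the paper finishes with the third isomorphism theorem.

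With this one-line replacement, the rest of your inductive step goes through. The projectivity, injectivity, unchanged socle, and identification of $\mathrm{soc}(U)/\mathrm{soc}(U_1)$ as the socle ideal of $U'$ over the quotient are all correct. So the error is local rather than structural.
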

\begin{proof}
	We proceed by induction on the number $t$ of pairwise non-isomorphic indecomposable direct summands of $U$. The case $t=0$ is trivial, so suppose that $t=1$. By \Cref{lem:DK}, we obtain the Auslander--Reiten quiver of $\leftmod{\Lambda/ \mathrm{soc}(U)}$ by removing $U$, its two adjacent arrows, and the Auslander--Reiten sequence displayed in \eqref{eq:DKseq}. In particular, we lose the projective $\Lambda$-module $U$, but turn $U / \mathrm{soc}(U)$ projective. This shows that the rank remains unchanged. {Using} \Cref{lem:MagARQuiver}, {this discussion also yields} that $\MagnitudeOfModuleCat{\Lambda}  = \MagnitudeOfModuleCat{\Lambda/\mathrm{soc}(U)}$. For the inductive step, suppose that $t\geq 2$. We may therefore decompose $U$ into $U' \oplus U_1$, where $U_1$ is indecomposable and not isomorphic to a direct summand of $U'$. By \Cref{lem:DK}\eqref{lem:DK_seq}, it follows that $U'$ is a projective-injective $\Lambda/\mathrm{soc}(U_1)$-module. As a result of the Third Isomorphism Theorem, we have a $k$-algebra isomorphism
	\begin{equation*}
		{\Lambda/\mathrm{soc}(U_1) \over {\mathrm{soc}(U')}} \simeq {\Lambda/\mathrm{soc}(U_1) \over {\mathrm{soc}(U)/\mathrm{soc}(U_1)}} \simeq {\Lambda \over \mathrm{soc}(U)}.
	\end{equation*}
	Hence, we deduce that 
	\begin{equation*}
	\MagnitudeOfModuleCat{\Lambda}  = \MagnitudeOfModuleCat{\Lambda/\mathrm{soc}(U_1)} =  \MagnitudeOfModuleCat{\Lambda/\mathrm{soc}(U)},
	\end{equation*}
	where the first equality comes about from the anchor step of the induction and the second follows from the induction hypothesis. Using a similar argument, one shows that the rank also remains unchanged.
\end{proof}

When calculating the magnitude of module categories, it is often useful to apply \Cref{lem:DK_magnitude} iteratively. Continuing with our \Cref{ex:A3cont}, one may factor out the socle of the maximal basic projective-injective $kQ/\mathrm{soc}(U)$-module, namely $\begin{smallmatrix}  2 \\ 3 \end{smallmatrix} \oplus \begin{smallmatrix}  1 \\ 2 \end{smallmatrix}$. As a result, we obtain a semisimple $k$-algebra of rank $3$, which has magnitude equal to $3$. By \Cref{lem:DK_magnitude}, this is an alternative way of showing that $\MagnitudeOfModuleCat{kQ}=3$.

\section{Euler characteristic of translation quivers}\label{sec: Translation quivers}

In the previous section, we showed that computing the magnitude of Auslander algebras reduces to a combinatorial computation on the Auslander--Reiten quiver. We will spend this section developing a theory of Euler characteristic for more general translation quivers. This will be useful when working with hereditary and self-injective algebras.

\begin{definition}[{see \cite{Rie80}, \cite[§VII.4]{ARS97}}]\label{def:translationQuiver}
 Let $\mathfrak{T}=(\mathfrak{T}_0,\mathfrak{T}_1)$ be a locally finite quiver without loops and multiple arrows (i.e.\, for any $x,y\in \mathfrak{T}_0$, there is at most one arrow $x \rightarrow y$ in $\mathfrak{T}_1$).
\begin{enumerate}
	\item Let $x\in \mathfrak{T}_0$. We define the following subsets of $\mathfrak{T}_0$:
	\begin{align*}
		x^{-} &\coloneqq \{y \in \mathfrak{T}_0 \,:\, \exists \text{ an arrow } y \rightarrow x \text{ in $\mathfrak{T}_1$} \},  \\
		x^{+} &\coloneqq  \{y \in \mathfrak{T}_0 \,:\, \exists \text{ an arrow } x \rightarrow y \text{ in $\mathfrak{T}_1$} \}.
	\end{align*}
	The elements in $x^{-}$ are the \textit{immediate predecessors of $x$} and elements in $x^{+}$ are the \textit{immediate successors of $x$}.
	\item A \textit{translation} of $\mathfrak{T}$
    is an injection $\tau\colon \mathfrak{T}^{\mathrm{np}}_0 \rightarrow \mathfrak{T}_0$, where $\mathfrak{T}^{\mathrm{np}}_0$ is a subset of $\mathfrak{T}_0$, such that $\tau(x)^+ = x^-$ for all $x\in \mathfrak{T}^{\mathrm{np}}_0 $ (see \eqref{eq:mesh} below). We refer to the pair $(\mathfrak{T},\tau)$ as a \textit{(non-valued) translation quiver} (without multiple arrows, see \cite[IV.4]{ASS06}). The vertices in $\mathfrak{T}^{\mathrm{np}}_0$ are called \textit{non-projective}, whereas the remaining vertices are called \textit{projective}.
    \end{enumerate}
    Let $(\mathfrak{T},\tau)$ be a translation quiver.
    \begin{enumerate}
    \setcounter{enumi}{2}
	\item We say that $(\mathfrak{T},\tau)$ is \textit{proper} if $x^-$ is non-empty for every $x\in \mathfrak{T}_0^{\mathrm{np}}$.
	\item\label{def:meshes_alpha} For $x\in \mathfrak{T}_0^{\mathrm{np}}$, denote the cardinality of $x^-$ by $\alpha(x)$. We also define
	$
		\alpha(\mathfrak{T},\tau) \coloneqq \sup_{x\in \mathfrak{T}_0^{\mathrm{np}}} \alpha(x).
	$
	\item\label{def:meshes_meshes} A \textit{mesh} in $(\mathfrak{T},\tau)$ is given by a {full} subquiver of $(\mathfrak{T},\tau)$ of the form \begin{equation}\label{eq:mesh}
    \begin{tikzpicture}[x=0.035cm,y=-0.020cm]
  \clip (0,365) rectangle + (200,120);
  \begin{scope}[every node/.style={inner sep=4pt}]
   \node (1_1) at (120,377) {$y_1$};
   \node (2_1) at (70,427) {$\tau(x)$};
   \node (3_2) at (120,477) {$y_{\alpha(x)}$};
   \node (2_2) at (170,427) {$x$};
   \node (2_1.5) at (120,427) {$\vdots$};
  \end{scope}
  \begin{scope}[every node/.style={fill=white,font=\scriptsize},every path/.style={-{Latex[length=1.5mm,width=1mm]}}]
   \path (2_1) edge["$g_{\alpha(x)}$"'] (3_2);
   \path (2_1) edge["$g_1$"] (1_1);
   \path (3_2) edge["$f_{\alpha(x)}$"'] (2_2);
   \path (1_1) edge["$f_{1}$"] (2_2);
  \end{scope}
 \end{tikzpicture} 
	\end{equation}
    where {$x\in \mathfrak{T}_0^{\mathrm{np}}$} and $x^- = \{y_1,\dots,y_{\alpha(x)}\}$. 
    \item\label{def:translationQuiverMor} A \textit{morphism of translation quivers} from $(\mathfrak{T},\tau)$ to $(\mathfrak{T}',\tau')$ is given by a morphism of quivers $\mathfrak{T}\xrightarrow{\varphi} \mathfrak{T}'$ such that $\varphi(\tau x) = \tau' \varphi(x)$ for all $x\in \mathfrak{T}_0^{\mathrm{np}}$. It is an \textit{isomorphism} if it admits an inverse morphism of translation quivers, and an \textit{automorphism} if additionally $(\mathfrak{T}',\tau')=(\mathfrak{T},\tau)$.
	\end{enumerate}
\end{definition}

Examples of (proper) translation quivers are Auslander--Reiten quivers of module categories of representation-finite bound path $k$-algebras. Projective vertices correspond to isomorphism classes of indecomposable projective modules, and meshes are determined by Auslander--Reiten sequences. 

\begin{definition}
	Let $(\mathfrak{T},\tau)$ be a finite translation quiver, and let $\mathcal{E}(\mathfrak{T},\tau)$ denote the set of meshes in $(\mathfrak{T},\tau)$. We then define the \textit{Auslander--Reiten--Euler characteristic} of $(\mathfrak{T},\tau)$ as follows:
	\begin{equation*}
		\chi_{\mathrm{AR}}(\mathfrak{T},\tau) \coloneqq |\mathfrak{T}_0| - |\mathfrak{T}_1| + |\mathcal{E}(\mathfrak{T},\tau)|.
	\end{equation*}
\end{definition}

\begin{remark}\label{rem:chi=chiAR}
    Let $(\mathfrak{AR}^{\Lambda},\tau)$ be the Auslander--Reiten quiver of $\leftmod{\Lambda}$ for a representation-finite bound path $k$-algebra $\Lambda$. It follows from \Cref{lem:MagARQuiver} that $\MagnitudeOfModuleCat{\Lambda}=\chi_{\mathrm{AR}}(\mathfrak{AR}^{\Lambda},\tau)$.
    \end{remark}
    
\begin{proposition}\label{prop:UpperBoundT}
\, 
    \begin{enumerate}
        \item\label{prop:UpperBoundT1} Let $(\mathfrak{T},\tau)$ be a finite proper translation quiver with $N$ vertices and $\ell$ arrows into projective vertices. Then $\chi_\mathrm{AR}(\mathfrak{T},\tau)\leq N-\ell$.
        \item\label{prop:UpperBoundT2} Let $\Lambda=kQ/I$ be a representation-finite bound path $k$-algebra over which there are $N$ indecomposable modules up to isomorphism. Then $\MagnitudeOfModuleCat{\Lambda}\leq N$, with equality if and only if $\Lambda$ is semisimple.
    \end{enumerate}
\end{proposition}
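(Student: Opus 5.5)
Part \eqref{prop:UpperBoundT1} reduces to counting arrows by their targets, as in the proof of \Cref{formula with 1-meshes and 3-meshes}. Every arrow of $\mathfrak{T}$ ends either at a projective vertex or at a non-projective vertex $x$, and in the latter case it belongs to $x^-$. This gives
\[
|\mathfrak{T}_1| = \ell + \sum_{x\in\mathfrak{T}_0^{\mathrm{np}}} \alpha(x).
\]
Meshes are in bijection with non-projective vertices, so $|\mathcal{E}(\mathfrak{T},\tau)| = |\mathfrak{T}_0^{\mathrm{np}}|$. Substituting both into the definition of $\chi_{\mathrm{AR}}$ yields
\[
\chi_{\mathrm{AR}}(\mathfrak{T},\tau) = N - \ell - \sum_{x\in\mathfrak{T}_0^{\mathrm{np}}}\bigl(\alpha(x)-1\bigr).
\]
Properness says $\alpha(x)\geq 1$ for every non-projective $x$, so each summand is non-negative and the bound $\chi_{\mathrm{AR}}\leq N-\ell$ follows.

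For \eqref{prop:UpperBoundT2}, I apply \eqref{prop:UpperBoundT1} to $(\mathfrak{AR}^\Lambda,\tau)$ using \Cref{rem:chi=chiAR}. This Auslander--Reiten quiver is proper, since an Auslander--Reiten sequence has nonzero middle term. Hence $\MagnitudeOfModuleCat{\Lambda}\leq N-\ell\leq N$.

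The remaining work is the equality case, which is the only step needing care. If $\Lambda$ is semisimple, then $\mathfrak{AR}^\Lambda$ has $N=n$ vertices, no arrows and no meshes, so $\MagnitudeOfModuleCat{\Lambda}=N$. Conversely, suppose equality holds. Then $\ell=0$, and every $\alpha(x)=1$, though only $\ell=0$ is needed. By \cite[Remark IV.4.3(a)]{ASS06}, $\ell$ equals the number of indecomposable direct summands of $\mathrm{rad}(\Lambda)$ up to isomorphism. Thus $\ell=0$ forces $\mathrm{rad}(\Lambda)=0$, so $\Lambda$ is semisimple. If I want to avoid that reference, I can argue directly: for any non-simple indecomposable projective $P$, the inclusion $\mathrm{rad}(P)\to P$ factors through a nonzero irreducible map into $P$, which gives an arrow into a projective vertex.
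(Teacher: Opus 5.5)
Your proposal is correct and follows essentially the same route as the paper. Both count arrows by their targets to get $|\mathfrak{T}_1| = \ell + \sum_{x\in\mathfrak{T}_0^{\mathrm{np}}}\alpha(x)$, use properness ($\alpha(x)\geq 1$), and reduce the equality case to $\ell=0$ and the fact that $\ell$ counts indecomposable summands of $\mathrm{rad}(\Lambda)$. Your identity $\chi_{\mathrm{AR}} = N-\ell-\sum(\alpha(x)-1)$ and your direct argument via $\mathrm{rad}(P)\to P$ only repackage and supplement the paper's inequality $A\geq \ell+E$.
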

\begin{proof}
    We first prove \eqref{prop:UpperBoundT1}.
    Let $A$ denote the number of arrows and $E$ the number of meshes in $(\mathfrak{T},\tau)$.
    Observe that
    \begin{equation}\label{eq:UpperBoundEq}
        A = \ell + \sum_{x\in \mathfrak{T}_0^{\mathrm{np}}} \alpha(x),
    \end{equation}
    where the notation $\alpha(x)$ was introduced in \Cref{def:translationQuiver}.
    Since $(\mathfrak{T},\tau)$ is proper, we have $\alpha(x)\geq 1$ for all $x\in \mathfrak{T}_0^{\mathrm{np}}$. Also, since $|\mathfrak{T}_0^{\mathrm{np}}|=E$, it is now clear that 
    \begin{equation}\label{eq:AlE}
        A\geq \ell+E.
    \end{equation}
    Hence, we have 
    \[\chi_\mathrm{AR}(\mathfrak{T},\tau) = N - A + E \leq N-\ell,\]
    as claimed.

    Secondly, we prove \eqref{prop:UpperBoundT2}. 
    The Auslander--Reiten quiver of $\leftmod{\Lambda}$ is a finite proper translation quiver. Hence, we may keep the same notation as above.
    By \Cref{rem:chi=chiAR} and \eqref{prop:UpperBoundT1}, we have that $\MagnitudeOfModuleCat{\Lambda}\leq N$.    
    It now suffices to show that $A=E$ precisely when $\Lambda$ is semisimple.
    If $\Lambda$ is semisimple, then $A=0=E$. Conversely, {if $A=E$}, we apply \eqref{eq:AlE} to deduce that it is necessary for $\ell$ to be $0$. Since $\ell$ is equal to the number of indecomposable direct summands of $\mathrm{rad}(\Lambda)$ up to isomorphism (see \cite[Remark IV.4.3(a)]{ASS06}), it follows that $\Lambda$ has trivial radical, so $\Lambda$ is semisimple.
\end{proof}

As shown in \Cref{sec: Magnitude of Auslander algebras}, Auslander--Reiten quivers are the main gadget for computing the magnitude of module categories. There are nevertheless other examples of translation quivers that will be useful for our purpose. 

\begin{definition}\label{def:repquiver}
    Let $T=(T_0,T_1)$ be a locally finite quiver {without loops} and multiple arrows. The \textit{repetitive quiver} of $T$ is the translation quiver denoted $\Z T$, which is defined as follows:
    \begin{itemize}
        \item $(\Z T)_0 \coloneqq \Z \times T_0  = \{(d,x) \,:\, d\in \Z, \,  x\in T_0 \}$,
        \item $(\Z T)_1 \coloneqq (\Z \times T_1) \cup \{(d,y) \xrightarrow{(d,a)^\ast} (d+1,x) \,:\, x\xrightarrow{a} y \in T_1,\, d\in\Z \}$,
        \item $\tau \colon (\Z T)_0 \to (\Z T)_0 $ such that $\tau(d,x)=(d-1,x)$. 
    \end{itemize}
    For $t\geq 1$, define the \textit{$t$-bounded repetitive quiver of $T$}, denoted $tT$, to be the full sub-translation quiver of $\Z T$ containing the vertices in $(\Z\cap [0,t-1])\times T_0$. 
\end{definition}

It is easy to see that $\Z T$ is independent of the orientation of $T$ up to isomorphism of translation quivers \cite[Satz on p. 204]{Rie80}.

As an example, let $Q$ be the Dynkin quiver of type $A_3$ considered in \Cref{ex:A3}, and let $Q^{\mathrm{op}}$ denote its opposite quiver. The translation quiver $3Q^{\mathrm{op}}$ then {has the form} 
\[ \begin{tikzpicture}[x=0.040cm,y=-0.025cm]
  \clip (0,365) rectangle + (335,120);
  \begin{scope}[every node/.style={inner sep=4pt}]
   \node (1_1) at (120,377) {$(0,1)$};
   \node (2_1) at (70,427) {$(0,2)$};
   \node (3_1) at (20,477) {$(0,3)$};
   \node (3_2) at (120,477) {$(1,3)$};
   \node (2_2) at (170,427) {$(1,2)$};
   \node (3_3) at (220,477) {$(2,3)$};
   \node (1_2) at (220,377) {$(1,1)$};
   \node (2_3) at (270,427) {$(2,2)$};
   \node (1_3) at (320,377) {$(2,1)$};
  \end{scope}
  \begin{scope}[every node/.style={fill=white,font=\scriptsize},every path/.style={-{Latex[length=1.5mm,width=1mm]}}]
   \path (3_1) edge (2_1);
   \path (3_2) edge[dashed, "$\tau$"'] (3_1);
   \path (2_1) edge (3_2);
   \path (2_1) edge (1_1);
   \path (2_2) edge[dashed, "$\tau$"'] (2_1);
   \path (3_2) edge (2_2);
   \path (1_1) edge (2_2);
   \path (3_3) edge[dashed, "$\tau$"'] (3_2);
   \path (2_2) edge (3_3);
   \path (2_2) edge (1_2);
   \path (1_2) edge (2_3);
   \path (3_3) edge (2_3);
   \path (2_3) edge (1_3);
   \path (1_3) edge[dashed, "$\tau$"'] (1_2);
   \path (1_2) edge[dashed, "$\tau$"'] (1_1);
   \path (2_3) edge[dashed, "$\tau$"'] (2_2);
  \end{scope}
 \end{tikzpicture}\] 
The full sub-translation quiver spanned by the vertices $(i,j)$ with $i<j$ is isomorphic to the Auslander--Reiten quiver shown in \Cref{ex:A3}.

Finally in this section, we include a result on the Auslander--Reiten--Euler characteristic of bounded repetitive quivers. It will be useful in \Cref{sec:Dynkin,sec:SelfInj}.

\begin{lemma}\label{lem:Magnitude of repetitions}
    Let $T=(T_0,T_1)$ be a {finite} quiver {without loops} and multiple arrows, let $t\geq 1$, and let $tT$ be the $t$-bounded repetitive quiver of $T$. Then we have $\chi_{\mathrm{AR}}(tT)=(2t-1)(|T_0|-|T_1|)$. 
    In particular, if $T$ is a tree, then $\chi_{\mathrm{AR}}(tT)=2t-1$.
\end{lemma}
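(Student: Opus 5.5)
The approach is a direct count of vertices, arrows and meshes of $tT$, followed by substitution into the definition of $\chi_{\mathrm{AR}}$.

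\textbf{Vertices and arrows.} By \Cref{def:repquiver}, $tT$ has vertex set $\{0,\dots,t-1\}\times T_0$, so $|(tT)_0| = t|T_0|$. Since $tT$ is a full subquiver of $\Z T$, its arrows are those of $\Z T$ with both endpoints in this range. These come in two families:
\begin{itemize}
\item the arrows $(d,a)$ with $d\in\{0,\dots,t-1\}$ and $a\in T_1$, contributing $t|T_1|$;
\item the arrows $(d,a)^\ast\colon (d,y)\to(d+1,x)$, which need $0\le d$ and $d+1\le t-1$, contributing $(t-1)|T_1|$.
\end{itemize}
Hence $|(tT)_1| = (2t-1)|T_1|$.

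\textbf{Translation and meshes.} First I fix the translation on $tT$. As a full sub-translation quiver, the non-projective vertices are those $x$ with $\tau x$ still in $tT$, namely the vertices $(d,x)$ with $1\le d\le t-1$. For such a vertex, its immediate predecessors in $tT$ coincide with those in $\Z T$, and likewise the immediate successors of $(d-1,x)$. Let me check this. In $\Z T$:
\begin{itemize}
\item the predecessors of $(d,x)$ are the vertices $(d,w)$ for arrows $w\to x$ in $T$, together with the vertices $(d-1,y)$ for arrows $x\to y$ in $T$;
\item the successors of $(d-1,x)$ are the vertices $(d-1,y)$ for arrows $x\to y$, together with the vertices $(d,w)$ for arrows $w\to x$ (coming from the starred arrows).
\end{itemize}
All of these vertices lie in levels $d-1$ and $d$, which are inside $tT$. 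So the mesh condition $\tau(z)^+=z^-$ holds within $tT$, and every non-projective vertex determines exactly one mesh. Therefore $|\mathcal{E}(tT)| = (t-1)|T_0|$.

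\textbf{Conclusion.} Substituting the three counts gives
\[\chi_{\mathrm{AR}}(tT) = t|T_0| - (2t-1)|T_1| + (t-1)|T_0| = (2t-1)(|T_0|-|T_1|).\]
If $T$ is a tree, then $|T_0|-|T_1|=1$, so $\chi_{\mathrm{AR}}(tT)=2t-1$.

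The only delicate step is the translation: one must ensure that the translation on $tT$ is the restriction of $\tau$, defined exactly on levels $1,\dots,t-1$, so that meshes are not truncated at the boundary levels $0$ and $t-1$. The predecessor/successor check above settles this, since every mesh of $\Z T$ centred at level $d\ge 1$ involves only levels $d-1$ and $d$.
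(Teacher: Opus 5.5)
Your proposal is correct and follows the paper's proof: you make the same three counts, $t|T_0|$ vertices, $(2t-1)|T_1|$ arrows and $(t-1)|T_0|$ meshes, and substitute them into $\chi_{\mathrm{AR}}$. Your explicit check that every mesh centred at levels $1,\dots,t-1$ lies entirely inside $tT$ is a detail the paper leaves implicit.
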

\begin{proof}
    The number of vertices in $tT$ is $t|T_0|$. Since there are $|T_1|$ arrows between every two consecutive copies of $T$ in $tT$, there are
    $
        t |T_1| + (t-1) |T_1| = (2t-1) |T_1|
    $
    arrows in $tT$ in total. Since the number of meshes in $tT$ is $(t-1)|T_0|$, we conclude that
    \[
        \chi_{\mathrm{AR}}(tT) = t|T_0| - (2t-1) |T_1| + (t-1)|T_0| = (2t-1)(|T_0|-|T_1|).
   \qedhere \]
\end{proof}

\section{Biserial algebras}\label{sec:biserial}

\begin{definition}\label{def: special biserial and string}
    A finite-dimensional bound path $k$-algebra $\Lambda=kQ/I$ is \textit{special biserial} if the following conditions hold. 
    \begin{enumerate}
        \item Every vertex in $Q$ has in-degree at most 2 and out-degree at most 2. 
        \item  For every arrow $i \xrightarrow{a} j$ in $Q$, there is at most one arrow $j \xrightarrow{b} j'$ in $Q$ such that $ba \not\in I$, and there is at most one arrow $i' \xrightarrow{c} i$ such that $ac\not\in I$. 
    \end{enumerate}
     If, additionally, the ideal $I$ is generated by paths, we say that $\Lambda$ is a \textit{string algebra}. 
\end{definition}

\begin{remark}
Recall that a module is \textit{uniserial} if it has a unique composition series.
Special biserial algebras are \textit{biserial}, in the sense of Fuller \cite{Ful79}. This is to say that all indecomposable projective left and right modules $P$ satisfy both of the following conditions:
\begin{enumerate}
    \item There exist uniserial submodules $M_1$ and $M_2$ of $P$ which either intersect trivially or at a simple submodule of $P$.
    \item There exists a unique maximal submodule $N$ of $P$, and $M_1 + M_2=N$.
\end{enumerate}
A representation-finite bound path $k$-algebra is biserial if and only if it is special biserial \cite{SW83}. 
\end{remark}

We recite well-known characterisations of representation-finite (special) biserial algebras and string algebras in the next lemma.
 
\begin{proposition}
    [{\cite[Corollary of Theorem 1]{SW83}, see also \cite[Theorem on p. 175]{BR87} and \cite[Proposition 2]{SA23}}]\label{lem:WW}
Fixing a representation-finite $k$-algebra $\Lambda$, let $\alpha(\Lambda)$ denote the maximal number of middle terms in an Auslander--Reiten sequence in $\leftmod{\Lambda}$ (cf. \Cref{def:translationQuiver}\eqref{def:meshes_alpha}). Furthermore, let $\beta(\Lambda)$ denote the maximal number of non-projective middle terms of such Auslander--Reiten sequences.
\begin{enumerate}
\item\label{lem:ARseq} $\Lambda$ is special biserial if and only if $\beta(\Lambda)\leq 2$.
\item\label{lem:WWseq} $\Lambda$ is a string algebra if and only if $\alpha(\Lambda)\leq 2$. 
\item\label{lem:WWuniserial}  If $\Lambda$ is special biserial and $U$ denotes the direct sum of all indecomposable non-uniserial projective-injective $\Lambda$-modules up to isomorphism, then $\Lambda/\mathrm{soc}(U)$ is a string algebra.
\end{enumerate}
\end{proposition}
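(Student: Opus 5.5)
The plan is to prove \eqref{lem:WWuniserial} first and then use it, with \Cref{lem:DK}, to derive \eqref{lem:ARseq} from \eqref{lem:WWseq}.

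\textbf{Step 1: part \eqref{lem:WWuniserial}.} Let $\Lambda=kQ/I$ be special biserial. The conditions of \Cref{def: special biserial and string} force $I$ to be generated by two kinds of relations.
\begin{enumerate}
\item Paths, i.e.\ zero relations.
\item Relations $p-\lambda q$ with $\lambda\neq 0$, where $p,q$ are parallel paths that start with different arrows and end with different arrows.
\end{enumerate}
For a relation of the second kind starting at $i$, the projective $P_i$ is non-uniserial, and its radical is the sum of two uniserial modules meeting in the simple socle spanned by $p$. Such a $P_i$ is also injective. Conversely, every non-uniserial projective-injective module arises in this way. Hence $\mathrm{soc}(U)$ is the ideal spanned by the classes of the maximal paths $p$ occurring in these binomial relations. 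Passing to $\Lambda/\mathrm{soc}(U)$ replaces each relation $p-\lambda q$ by the two monomial relations $p$ and $q$. The degree conditions and the ``at most one continuation'' conditions are unaffected, so the quotient is a string algebra.

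\textbf{Step 2: part \eqref{lem:WWseq}.} For a string algebra I would invoke the Butler--Ringel description of Auslander--Reiten sequences via hooks and cohooks. In that description, each Auslander--Reiten sequence ending in a string or band module has middle term with at most two indecomposable summands, so $\alpha(\Lambda)\le 2$.

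For the converse, suppose $\Lambda$ is not a string algebra. There are two cases.
\begin{enumerate}
\item $\Lambda$ is special biserial but has a binomial relation. Then there is a non-uniserial projective-injective $U$, and the sequence \eqref{eq:DKseq} has middle term $U\oplus X$. Here $X\simeq \mathrm{rad}(U)/\mathrm{soc}(U)$ splits into the two uniserial arms, so this sequence has three middle terms.
\item $\Lambda$ is not special biserial. Then some vertex has three incoming or outgoing arrows, or some arrow has two non-vanishing continuations. In the first sub-case the simple at that vertex has a three-term (co)radical, which yields an Auslander--Reiten sequence with at least three middle terms. The second sub-case is handled as in Step 3 below.
\end{enumerate}

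\textbf{Step 3: part \eqref{lem:ARseq}.} If $\Lambda$ is special biserial, apply \Cref{lem:DK} iteratively to the summands of $U$, as in the proof of \Cref{lem:DK_magnitude}. By \Cref{lem:DK}\eqref{lem:DK_seq}, every Auslander--Reiten sequence of $\Lambda$ is either an Auslander--Reiten sequence of the string algebra $\Lambda/\mathrm{soc}(U)$ or one of the sequences \eqref{eq:DKseq}.
\begin{enumerate}
\item Sequences of the first kind have at most two middle terms by Step 2.
\item Each sequence \eqref{eq:DKseq} has at most two non-projective middle terms, because $U$ is projective and $X$ has two summands.
\end{enumerate}
Hence $\beta(\Lambda)\le 2$.

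The converse is the main obstacle: if $\Lambda$ is not special biserial, one must produce an Auslander--Reiten sequence with three \emph{non-projective} middle terms. I would first reduce to minimal non-special-biserial configurations, using that $\beta$ does not increase under passing to suitable full convex subcategories and quotients. I would then check, by a local Auslander--Reiten computation on the finite list of minimal configurations, namely a vertex with three neighbours, or an arrow with two non-zero continuations together with a commutativity relation, that some mesh has three non-projective middle terms. If this direct route becomes unwieldy, I would fall back on covering theory, lifting to a simply connected Galois cover where such meshes can be located explicitly. This matches the approach of \cite{SW83}, which the statement cites.
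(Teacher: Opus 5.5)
The paper does not prove this proposition; it quotes it from \cite{SW83,BR87,SA23}. Most of your proposal is sound. Step 1 gives \eqref{lem:WWuniserial}. It quietly uses that $I$ is generated by paths and binomials $p-\lambda q$, which needs a short argument: for a vertex $i$ with two outgoing arrows $a_1\neq a_2$, the modules $\Lambda a_1$ and $\Lambda a_2$ are uniserial with bases of paths, and $\Lambda a_1\cap\Lambda a_2$ is $0$ or simple. This is standard. Several other pieces are also correct:
\begin{enumerate}
\item The implication ``string $\Rightarrow \alpha(\Lambda)\leq 2$'' from Butler--Ringel.
\item The special biserial, non-string case of the converse of \eqref{lem:WWseq}. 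Here \eqref{eq:DKseq} has middle term $U\oplus \Lambda a_1/kp\oplus\Lambda a_2/kq$, and both arms are nonzero because $p,q$ have length at least $2$ by admissibility.
\item The implication ``special biserial $\Rightarrow\beta(\Lambda)\leq 2$'' via iterated use of \Cref{lem:DK}.
\end{enumerate}

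The genuine gap is the implication ``$\Lambda$ not special biserial $\Rightarrow \beta(\Lambda)\geq 3$''. This is the real content of the Skowro\'nski--Waschb\"usch result. The remaining case of the converse of \eqref{lem:WWseq} also rests on it; once it is known, that case follows from $\alpha(\Lambda)\geq\beta(\Lambda)$.

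Your separate sub-case about a vertex with three in- or out-arrows is therefore superfluous, and as written it is not an argument. It is unclear which Auslander--Reiten sequence is meant. Moreover, a vertex of out-degree three need not produce a projective with three-term radical: in the incidence algebra of the poset $\{i<j_1,j_2,j_3<l\}$, with all squares commuting, $\mathrm{rad}(P_i)$ is indecomposable.

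For the main implication you only outline a strategy, and its key step is not available. The claim that $\beta$ cannot increase under passing to convex subcategories or quotients is not a general fact. Auslander--Reiten sequences over $\Lambda/J$ (a convex subcategory is such a quotient) are in general not Auslander--Reiten sequences over $\Lambda$. \Cref{lem:DK} is precisely the special situation $J=\mathrm{soc}(U)$ in which they do transfer. So a mesh with three non-projective middle terms found in a small configuration has no reason to survive in $\leftmod{\Lambda}$. A priori, this monotonicity is a consequence of the statement you want to prove, not a tool for proving it.

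Likewise, you assert but do not show that there is a finite list of minimal representation-finite non-special-biserial configurations on which meshes can be checked locally. This reduction is exactly where \cite{SW83} needs covering theory, via Galois coverings by simply connected representation-finite algebras. As it stands, this direction of \eqref{lem:ARseq}, and with it part of \eqref{lem:WWseq}, is unproved. You must either carry out that analysis or cite it, as the paper does.
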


To prove \Cref{thm:SpecialBiserial}, we will need to classify the Auslander--Reiten sequences of modules over string algebras having exactly one middle term. Such a classification is also well established.

\begin{proposition}[{\cite[Theorem on p.\! 148, Lemma on p.\! 151, and Corollary on p.\! 174]{BR87}}]\label{Butler--Ringel}
	Let $\Lambda=kQ/I$ be a bound path $k$-algebra. For an arrow $i \xrightarrow{a} j$ in $Q$, let $V_{a}$ denote the indecomposable $\Lambda$-module with projective presentation $P_j \xrightarrow{a\cdot -} P_i$. Let $\mathrm{BR}_{\Lambda}(a)$ denote the isomorphism class of Auslander--Reiten sequences in $\leftmod{\Lambda}$ in which the final term is isomorphic to $V_{a}$. Then $\alpha(V_{a})=1$, and this construction determines an injective map
	\begin{equation*}
    \begin{tikzcd}
        Q_1 \arrow[r,"\mathrm{BR}_{\Lambda}"] & \mathcal{E}_1(\Lambda).
    \end{tikzcd}
	\end{equation*}
	If $\Lambda$ is a representation-finite string algebra, then $\mathrm{BR}_{\Lambda}$ is a bijection.
\end{proposition}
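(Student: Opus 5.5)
The plan is to treat the three claims in turn: that $\alpha(V_a)=1$, that $\mathrm{BR}_\Lambda$ is injective, and that $\mathrm{BR}_\Lambda$ is a bijection for representation-finite string algebras. For the first two I would work directly with the presentation $P_j\xrightarrow{a\cdot-}P_i$. Its image is $\Lambda a\subseteq \mathrm{rad}(P_i)$, so $V_a=P_i/\Lambda a$ is local with top $S_i$, hence indecomposable. Since $I$ is admissible, $a\notin I$, so $V_a\neq P_i$. Moreover $\Lambda a$ has simple top $S_j$ and $\Lambda a\subseteq\mathrm{rad}(P_i)$, so $P_i\to V_a$ is a projective cover, $P_j\to\Lambda a$ is a projective cover of the first syzygy, and the displayed presentation is minimal. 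In particular $V_a$ is non-projective, so $\mathrm{BR}_\Lambda(a)$ is defined.

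Minimality gives the transpose explicitly: $\mathrm{Tr}\,V_a\cong e_j\Lambda/a\Lambda$, and therefore $\tau V_a\cong D(e_j\Lambda/a\Lambda)$. This is a submodule of the indecomposable injective $I_j$, and it has simple socle $S_j$. To show $\alpha(V_a)=1$, I would construct the Auslander--Reiten sequence by hand. The idea is to glue $P_i$ and $I_j$ along the common composition factor coming from the arrow $a$: take the pushout/pullback of $P_i\twoheadrightarrow V_a$ and $\tau V_a\hookrightarrow I_j$ over the piece $\Lambda a\to S_j$, and let $E$ be the resulting ``middle'' module. This should give a non-split sequence $0\to\tau V_a\to E\to V_a\to 0$. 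The plan is then to check three things:
\begin{enumerate}
\item $E$ is indecomposable; I would verify this by showing that $\End_\Lambda(E)$ is local, using that $E$ has simple top $S_i$ or, dually, simple socle $S_j$.
\item The sequence is almost split. For this it suffices to check that it is non-split and that $\tau V_a$ has a local endomorphism ring, and then invoke the standard criterion: such a sequence is almost split if every non-split-epi map $V_a'\to V_a$ from an indecomposable factors through $E$. Because $V_a$ is local with top $S_i$, such maps land in $\mathrm{rad}(V_a)$.
\end{enumerate}
I expect this verification to be the main obstacle. If the direct argument gets messy, my first fallback is to follow Butler--Ringel and check the factorisation property only on radical maps to $V_a$, via the lifting through $P_i$.

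Injectivity of $\mathrm{BR}_\Lambda$ is easier. Since Auslander--Reiten sequences are determined by their end terms, it suffices to show that $V_a\cong V_{a'}$ forces $a=a'$. An isomorphism $V_a\cong V_{a'}$ already forces the source $i$ to agree (from the top) and the target $j$ to agree (from the top of the first syzygy). The isomorphism lifts to an automorphism of $P_i$, namely right multiplication by a unit $u=\lambda e_i+r$ with $\lambda\in k^\times$ and $r\in\mathrm{rad}$, which carries $\Lambda a$ onto $\Lambda a'$. Then $a'\in\Lambda a u$, so modulo $\mathrm{rad}^2(\Lambda)$ the arrow $a'$ is a scalar multiple of $a$. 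Since distinct arrows are linearly independent in $\mathrm{rad}/\mathrm{rad}^2$, it follows that $a=a'$.

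For surjectivity in the representation-finite string algebra case, I would use the Butler--Ringel description of Auslander--Reiten sequences through strings and hooks. For a string module $M(C)$, the middle term of the sequence ending at $M(C)$ has summands obtained by adding or deleting a hook on each side of $C$. Exactly one middle term occurs precisely when the operation on one side produces the zero module; this is the case where $C$ cannot be extended and is ``almost'' a single arrow. I would check that this degenerate configuration forces $M(C)\cong V_a$ for the arrow $a$ determined by the cohook. Since a representation-finite string algebra has no band modules, every non-projective indecomposable is a string module, so this accounts for all of $\mathcal{E}_1(\Lambda)$.
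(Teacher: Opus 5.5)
The paper does not prove this proposition; it quotes it from Butler--Ringel. Your argument therefore has to stand on its own, and it has a genuine gap exactly where the content of $\alpha(V_a)=1$ lies: the indecomposability of the middle term.

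\textbf{The gap.} Your $E$ is the right module: the pushout of $0\to\Lambda a\to P_i\to V_a\to 0$ along $\phi\colon\Lambda a\twoheadrightarrow S_j\hookrightarrow U:=\tau V_a$. But step (1) rests on ``$E$ has simple top $S_i$ or simple socle $S_j$'', and this is false in general.

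\begin{itemize}
\item Apply $\Hom_\Lambda(-,S)$ and $\Hom_\Lambda(S,-)$, for $S$ simple, to the Auslander--Reiten sequence. This gives $\mathrm{top}\,E\cong\mathrm{top}\,U\oplus S_i$ unless $U$ is simple, and $\mathrm{soc}\,E\cong S_j\oplus\mathrm{soc}\,V_a$ unless $V_a$ is simple. So your criterion only covers the cases where $V_a$ or $U$ is simple.
\item Concretely, take $Q\colon 4\xleftarrow{d}1\xrightarrow{a}2\xleftarrow{c}3$. Then $V_a=\begin{smallmatrix}1\\4\end{smallmatrix}$ and $U=\begin{smallmatrix}3\\2\end{smallmatrix}$. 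The middle term $E$ is the sincere indecomposable $kQ$-module, with top $S_1\oplus S_3$ and socle $S_2\oplus S_4$.
\item The verification you outline never uses that $a$ is an arrow, i.e.\ $a\notin\mathrm{rad}^2\Lambda$, and it has to. For $1\xrightarrow{a}2\xrightarrow{b}3$, the module $P_1/\Lambda ba=\begin{smallmatrix}1\\2\end{smallmatrix}$ is local with local syzygy, and its $\tau$-translate $\begin{smallmatrix}2\\3\end{smallmatrix}$ is colocal. Its Auslander--Reiten sequence is the same kind of pushout along a simple, yet its middle term is $P_1\oplus S_2$.
\end{itemize}

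\textbf{One way to close it.} If $U$ is simple, then $E\cong P_i/\mathrm{rad}(\Lambda a)$ is local. Otherwise the arrow hypothesis gives a key fact: every homomorphism $\mathrm{rad}P_i\to U$ has image in $\mathrm{rad}\,U$. To see this, identify $\Hom_\Lambda(\mathrm{rad}P_i,D(e_j\Lambda/a\Lambda))$ with $D(e_j\mathrm{rad}P_i/a\,\mathrm{rad}P_i)$. For $\bar m\in\mathrm{soc}(e_j\Lambda/a\Lambda)$ with $m\in e_j\mathrm{rad}\Lambda$ and $n\in\mathrm{rad}P_i$, one has $mn\in a\Lambda\cap\mathrm{rad}^2\Lambda=a\,\mathrm{rad}\Lambda$.

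Now write the sequence as $0\to U\xrightarrow{g}E\xrightarrow{f}V_a\to 0$, let $x\in E$ be the image of $e_i$, and suppose $E=E_1\oplus E_2$.
\begin{itemize}
\item For $u\in e_iU$ we have $(au)(\bar e_j)=u(\bar a)=0$. This is also what proves non-splitness, which you only assert.
\item Hence the summand mapping onto $V_a$ contains $g(\mathrm{soc}\,U)$. The other summand therefore lies in $W=f^{-1}(\mathrm{rad}\,V_a)\cong U\oplus\mathrm{rad}\,V_a$.
\item For an indecomposable summand $Z$ of it, the map $\lambda\mapsto(\text{$Z$-component of }\lambda x)$ on $\mathrm{rad}P_i$ has the form $\lambda\mapsto\lambda z$.
\item By Krull--Schmidt, one of two things happens. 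Either the $U$-component of $W\to Z$ is an isomorphism; then $\phi$ extends to $P_i$, contradicting non-splitness. Or its $\mathrm{rad}\,V_a$-component is a split epimorphism; then comparing tops and using the key fact forces $Z=\mathrm{rad}\,Z$.
\end{itemize}

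\textbf{Two smaller repairs.}
\begin{itemize}
\item In step (2), non-split-epi maps into the local module $V_a$ need not land in $\mathrm{rad}\,V_a$; the projective cover $P_i\to V_a$ is a counterexample. Also, non-splitness plus locality of $\End(U)$ does not make a sequence almost split. What works: $\rho\phi=0$ for all $\rho\in\mathrm{rad}\End(U)$, because $\phi$ factors through the simple socle of $U$. So the nonzero class lies in the simple socle of $\Ext^1(V_a,U)$, and is therefore the Auslander--Reiten sequence.
\item In the string-algebra part, a single middle term does not mean ``almost a single arrow''. It means one side admits neither a hook nor a nonzero cohook deletion. This forces $C$ to be a direct string that cannot be prolonged at its socle end, so $M(C)\cong P_i/\Lambda a$ for the other arrow $a$ starting at its top vertex $i$.
\end{itemize}
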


\begin{theorem}\label{thm:SpecialBiserial}
    Let $\Lambda$ be a representation-finite (special) biserial bound path $k$-algebra of rank $n$. Then $\MagnitudeOfModuleCat{\Lambda} = n$.
\end{theorem}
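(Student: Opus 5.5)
The plan is to reduce first to the string algebra case using \Cref{lem:DK_magnitude} and \Cref{lem:WW}\eqref{lem:WWuniserial}, and then to compute the magnitude of a string algebra with the formula of \Cref{formula with 1-meshes and 3-meshes}.

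\textbf{Reduction to string algebras.} Since $\Lambda$ is representation-finite and biserial, it is special biserial. Let $U$ be the direct sum of all indecomposable non-uniserial projective-injective $\Lambda$-modules up to isomorphism; this is a basic module. A non-uniserial module is not simple, so $U$ has no simple direct summands. By \Cref{lem:DK_magnitude}, we then have $\MagnitudeOfModuleCat{\Lambda}=\MagnitudeOfModuleCat{\Lambda/\mathrm{soc}(U)}$, and the rank of $\Lambda/\mathrm{soc}(U)$ is still $n$. By \Cref{lem:WW}\eqref{lem:WWuniserial}, the quotient $\Lambda/\mathrm{soc}(U)$ is a string algebra. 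It is again a bound path algebra: $\mathrm{soc}(U)$ lies in $\mathrm{rad}^2$ because $U$ is non-uniserial, so the Gabriel quiver is unchanged and the ideal stays admissible. It is also representation-finite, since its indecomposables are those of $\Lambda$ minus $U$ by \Cref{lem:DK}\eqref{lem:DK_ind}. So it suffices to treat the case where $\Lambda=kQ/I$ is a representation-finite string algebra.

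\textbf{String algebra case.} A string algebra is monomial by definition, so \Cref{formula with 1-meshes and 3-meshes} gives
\[
\MagnitudeOfModuleCat{\Lambda}=n-|Q_1|+|\mathcal{E}_1(\Lambda)|-|\mathcal{E}_3(\Lambda)|-2|\mathcal{E}_4(\Lambda)|.
\]
By \Cref{lem:WW}\eqref{lem:WWseq}, every Auslander--Reiten sequence has at most two middle terms, so $\mathcal{E}_3(\Lambda)=\mathcal{E}_4(\Lambda)=\emptyset$. By \Cref{Butler--Ringel}, the map $\mathrm{BR}_\Lambda\colon Q_1\to\mathcal{E}_1(\Lambda)$ is a bijection for representation-finite string algebras, so $|\mathcal{E}_1(\Lambda)|=|Q_1|$. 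Substituting these gives $\MagnitudeOfModuleCat{\Lambda}=n$.

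\textbf{Main obstacle.} The delicate point is the reduction step. One must check that \Cref{lem:DK_magnitude} applies: $U$ is projective-injective without simple summands, and the quotient is again a representation-finite bound path algebra whose Gabriel quiver has $n$ vertices. One must also check the hypotheses of \Cref{lem:WW}\eqref{lem:WWuniserial}, in particular that it is stated for exactly this $U$. Uniserial projective-injectives need not be removed, because the string algebra conclusion already accounts for them. Once the reduction is secured, the string algebra computation is immediate from the cited results.
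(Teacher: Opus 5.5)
Your proposal is correct and follows the paper's proof essentially step for step. You reduce to a representation-finite string algebra by factoring out the socle of the non-uniserial projective-injectives (via the rejection-lemma consequence together with the Skowro\'nski--Waschb\"usch characterisation). You then apply the monomial formula with $\ell=|Q_1|$, kill $\mathcal{E}_3$ and $\mathcal{E}_4$ because string algebras have at most two middle terms, and use the Butler--Ringel bijection $Q_1\to\mathcal{E}_1(\Lambda)$. Your additional check that $\mathrm{soc}(U)\subseteq\mathrm{rad}^2(\Lambda)$, so that the quotient is still an admissible bound path algebra on the same quiver, is correct and fills in a detail the paper leaves implicit.
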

\begin{proof}
    Since simple modules are uniserial, we may apply \Cref{lem:DK_magnitude} and \Cref{lem:WW}\eqref{lem:WWuniserial} to $\Lambda$ and obtain a string algebra with a module category of equal magnitude. It is thus sufficient to consider the case where $\Lambda$ is a representation-finite string algebra of rank $n$. {Note that string algebras are monomial algebras by definition.} It then follows from \Cref{formula with 1-meshes and 3-meshes} and \Cref{lem:WW}\eqref{lem:WWseq} that
    $$\MagnitudeOfModuleCat{\Lambda} = n - |Q_1| + |\mathcal{E}_1(\Lambda)| .$$
    By \Cref{Butler--Ringel}, we have $|Q_1| = |\mathcal{E}_1(\Lambda)|  $, and thus, $\MagnitudeOfModuleCat{\Lambda} = n$.
\end{proof}

Recall that a finite-dimensional $k$-algebra $\Lambda$ is of \textit{left local-colocal type} if every indecomposable left $\Lambda$-module either has a simple top or a simple socle. It is stronger for $\Lambda$ to be of \textit{left local type} (resp.\! \textit{left colocal type}), i.e.\! every indecomposable left $\Lambda$-module has a simple top (resp.\! simple socle). Stronger still, we say that $\Lambda$ is a \textit{Nakayama $k$-algebra} if each indecomposable left $\Lambda$-module is uniserial.

\begin{corollary}\label{cor:Nakayama}
    Let $\Lambda$ be a representation-finite bound path $k$-algebra of rank $n$ which {is} of left local-colocal type. Then $\MagnitudeOfModuleCat{\Lambda} = n$. In particular, this holds when $\Lambda$ is a Nakayama $k$-algebra.
\end{corollary}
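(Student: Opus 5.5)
The plan is to reduce the corollary to \Cref{thm:SpecialBiserial}. It suffices to show that a representation-finite bound path $k$-algebra $\Lambda=kQ/I$ of left local-colocal type is special biserial. The Nakayama case is then immediate, since uniserial modules have simple top (and simple socle), so Nakayama algebras are of left local type, hence of left local-colocal type.

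To prove special biseriality, I would use the Auslander--Reiten characterisation in \Cref{lem:WW}\eqref{lem:ARseq}, which says it is enough to show $\beta(\Lambda)\leq 2$. Suppose there were an Auslander--Reiten sequence $0\to C\to \bigoplus_{i=1}^m B_i\to \tau^{-1}C\to 0$ with at least three non-projective indecomposable middle terms. The aim is to derive a contradiction with the local-colocal hypothesis.

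The natural tool is the classical description of modules over algebras of local-colocal type (Tachikawa; see also \cite{ARS97}). I would work with the alternative quiver-level route instead, showing directly that the two conditions in \Cref{def: special biserial and string} hold.

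\textbf{Condition (1): degrees at most $2$.} If a vertex $i$ had out-degree at least $3$, say with arrows $a_1,a_2,a_3$ starting at $i$, then $P_i/\mathrm{rad}^2P_i$ has socle containing $S_{j_1}\oplus S_{j_2}\oplus S_{j_3}$. I would produce an indecomposable module with neither simple top nor simple socle by gluing. Take two copies of such local modules and identify one socle summand, giving a module with top $S_i^2$ and socle of length at least $3$. One must check this glued module is indecomposable; this is a standard $\tilde{D}_4$-type subquiver argument, and I expect it to work using the three arrows. Dually, in-degree at least $3$ at $j$ gives a colocal module with three top summands, and a similar gluing applies.

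\textbf{Condition (2): the uniqueness conditions on compositions.} Suppose $a\colon i\to j$ and $b_1,b_2\colon j\to j_1,j_2$ satisfy $b_1a,b_2a\notin I$. Then I would consider the module $P_i/(\text{paths of length}\geq 3, \text{ other arrows out of } i)$ together with a second module glued at $S_{j_1}$. The goal is again an indecomposable module with top of length $2$ and socle of length $2$, violating the hypothesis. The dual configuration is handled symmetrically.

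\textbf{Main obstacle.} The hard step is verifying indecomposability of these glued modules uniformly, since $I$ may be non-monomial. I would first try to reduce to the case where $I$ contains all paths of length $3$, by passing to $\Lambda/\mathrm{rad}^3\Lambda$. Quotients preserve the local-colocal property, because their indecomposable modules are indecomposable $\Lambda$-modules. In that setting the modules become representations of small trees, where indecomposability can be read off from coefficient quivers.

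Once special biseriality is established, \Cref{thm:SpecialBiserial} gives $\MagnitudeOfModuleCat{\Lambda}=n$.
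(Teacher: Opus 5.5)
\textbf{Verdict: genuine gap.} Your reduction is the same as the paper's: show that left local-colocal type forces (special) biseriality, then apply \Cref{thm:SpecialBiserial}. The paper does the first step by citing Tachikawa \cite[Proposition 2.7]{Tac61}. You mention that tool but set it aside for a quiver-level argument, and that argument has a genuine gap.

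The concrete failure is in Condition (1). Gluing two copies of $M=P_i/\mathrm{rad}^2P_i$ along one socle summand $S=S_{j_1}$ always gives a decomposable module. For any submodule $S\subseteq M$, the pushout of the two identical inclusions $S\hookrightarrow M$ is isomorphic to $M\oplus M/S$, via $(m,m')\mapsto(m+m',\overline{m'})$. Concretely, $w-v$ is killed by $a_1$ and spans a complement at vertex $i$. So no indecomposability check can rescue this construction.

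The witness you want is the maximal-root representation of the $D_4$ star; it is not a $\tilde D_4$ configuration. Take the two \emph{different} quotients of $P_i/\mathrm{rad}^2P_i$ with socles $S_{j_1}\oplus S_{j_2}$ and $S_{j_2}\oplus S_{j_3}$, and glue them along $S_{j_2}$. Then $a_1,a_2,a_3$ have pairwise distinct kernels on $V_i=k^2$, and the module is indecomposable with top $S_i^2$ and socle of length $3$. The dual case, and loops among the $a_k$, need corresponding adjustments.

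Condition (2) is only sketched, since the second module is never specified. Gluing $i/j/(j_1\oplus j_2)$ with $j/j_1$ along $S_{j_1}$ would give the analogous $D_4$ module on $i\to j\to\{j_1,j_2\}$. However, your reduction to $\Lambda/\mathrm{rad}^3\Lambda$ does not produce monomial relations. A commutativity relation such as $b_1a=cd$ through another vertex $j'$ survives in that quotient. The tree-supported representation is then not a module, and the witness must also involve $j'$.

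Moreover, suppose $b_1a\notin I$ but $b_1a\in\mathrm{rad}^3\Lambda$. Then the composition vanishes in $\Lambda/\mathrm{rad}^3\Lambda$ and your witness disappears. This reflects a further issue: the conditions in \Cref{def: special biserial and string} depend on the chosen presentation, while \Cref{lem:WW} and \Cref{thm:SpecialBiserial} concern the algebra itself. You would therefore also have to say for which presentation you verify them.

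The simplest repair is the paper's route. Cite \cite{Tac61} for the implication from left local-colocal type to biseriality, then apply \Cref{thm:SpecialBiserial}, which covers representation-finite biserial algebras via \cite{SW83}.
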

\begin{proof}
    Since $k$-algebras of left local-colocal type are biserial \cite[Proposition 2.7]{Tac61}, the assertion follows from \Cref{thm:SpecialBiserial}.
\end{proof}

\begin{remark}
    A less general version of \Cref{cor:Nakayama}, concerning representation-finite $k$-algebras that are of left local type (or dually, of left colocal type), can be proved from first principles as follows. Note that this argument also applies to Nakayama $k$-algebras.

    It follows from \Cref{rem:chi_indmod}\eqref{rem:chi_indmod1} that the similarity matrix of $\mathrm{Aus}(\Lambda)$ is $\mathbf{Z}_{\mathrm{Aus}(\Lambda)} = (\dim_k\Hom_{\Lambda}(M_j,M_i))_{i,j}$, where $\{M_j\}_{j=1}^N$ is a complete set of indecomposable $\Lambda$-modules up to isomorphism. Suppose that the first $n$ elements of this family are the simple $\Lambda$-modules $S_i$.
    Consider the $1\times N$ row vector $\mathbf{s}$, with 1 in the first $n$ entries and 0 elsewhere. By \Cref{rem:Cartan}\eqref{rem:Cartan2}, it is enough to show that $\mathbf{s}$ is a co-weighting of $\mathbf{Z}_{\mathrm{Aus}(\Lambda)}$. The first $n$ rows $r_i$ of $\mathbf{Z}_{\mathrm{Aus}(\Lambda)}$ are of the form $r_i=(\dim_k\Hom_{\Lambda}(M_j,S_i))_{j=1}^N$.
    Since all indecomposable $\Lambda$-modules have a simple top, it follows that each column of $\mathbf{Z}_{\mathrm{Aus}(\Lambda)}$ has a single non-zero entry among the first $n$ entries, and that this non-zero entry is 1. This implies that $\mathbf{s}$ is indeed a co-weighting of $\mathbf{Z}_{\mathrm{Aus}(\Lambda)}$.
\end{remark}

\begin{corollary}\label{cor:Local}
    Let $\Lambda$ be a representation-finite bound path $k$-algebra which is local, i.e.\! it has rank 1. Then $\MagnitudeOfModuleCat{\Lambda} = 1$.
\end{corollary}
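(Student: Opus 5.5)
The plan is to reduce to \Cref{thm:SpecialBiserial}: I will show that a representation-finite local bound path $k$-algebra $\Lambda=kQ/I$ is special biserial. The rank is $1$, so the Gabriel quiver $Q$ has a single vertex, and every arrow is a loop at that vertex. Let $m=|Q_1|$ be the number of loops.

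The first step is to bound $m$ using representation-finiteness. If $m\geq 3$, then $\Lambda/\mathrm{rad}^2(\Lambda)$ is the radical square zero local algebra with $m$ loops. Its representation type is governed by its separated quiver, which here is the Kronecker-type quiver with $m$ arrows between two vertices. For $m\geq 2$ this quiver is not a disjoint union of Dynkin quivers, so $\Lambda/\mathrm{rad}^2(\Lambda)$ is representation-infinite. Hence so is $\Lambda$, since modules over a quotient algebra are $\Lambda$-modules. This gives $m\leq 1$, and in fact already rules out $m=2$.

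The second step treats the remaining cases. If $m=0$, then $\Lambda=k$, whose module category is semisimple with one indecomposable, so $\MagnitudeOfModuleCat{\Lambda}=1$ directly. If $m=1$, then $\Lambda\simeq k[x]/(x^r)$ is a Nakayama algebra, and \Cref{cor:Nakayama} (or \Cref{thm:SpecialBiserial}) gives $\MagnitudeOfModuleCat{\Lambda}=1$. Both cases are also covered by checking the conditions in \Cref{def: special biserial and string}: in-degree and out-degree are at most $1$, and the composition condition is trivial.

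The main obstacle is justifying the separated-quiver claim, namely that a radical square zero algebra is representation-finite exactly when its separated quiver is a union of Dynkin quivers (Gabriel). Two loops yield the Kronecker quiver, which is Euclidean, and is therefore representation-infinite over any field. If I prefer to avoid that citation, I will instead exhibit directly the infinite family of modules $k^2$ with the loops acting as $\begin{pmatrix}0&0\\1&0\end{pmatrix}$ and $\begin{pmatrix}0&0\\\lambda&0\end{pmatrix}$ for $\lambda\in k$. These are pairwise non-isomorphic when $k$ is infinite. For finite $k$, I will use the band modules of higher dimension, or pass to the algebraic closure, since extension of scalars preserves representation-finiteness via \Cref{rem:fielddoesntmatter}.
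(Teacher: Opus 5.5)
Your proposal is correct and takes the same route as the paper: identify $\Lambda$ with $k$ or a truncated polynomial algebra $k[x]/(x^r)$, then apply \Cref{cor:Nakayama}. The paper takes this classification of local representation-finite bound path algebras as known, whereas you prove it by passing to $\Lambda/\mathrm{rad}^2(\Lambda)$ and applying the separated-quiver criterion \cite[Theorem X.2.6]{ARS97}; that criterion holds over any field, so your fallback via explicit modules or extension of scalars is unnecessary, and your first step should simply read ``if $m\geq 2$'' rather than ``if $m\geq 3$''.
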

\begin{proof}
    A local representation-finite bound path $k$-algebra is Morita equivalent to a $k$-algebra of the form $k[\begin{tikzcd} 1 \arrow[loop right,"a"] \end{tikzcd}]/(a^{\ell})$ for some $\ell\geq 1$. Since this is a Nakayama $k$-algebra, the claim follows from \Cref{cor:Nakayama}.
\end{proof}

It is also possible to prove \Cref{cor:Local} by iteratively using \Cref{lem:DK_magnitude}.
We conclude this section with classes of examples from various areas of representation theory.

\begin{corollary}\label{cor:block}
    Suppose that $k$ is algebraically closed and of prime characteristic $p$. 
    Let $G$ be a finite group whose order is divisible by $p$, and let $B$ be a $p$-block of the group algebra $kG$ with cyclic defect
group of order $p^d$, for some $d$. Then $\MagnitudeOfModuleCat{B}$ is equal to the number of irreducible Brauer characters of $B$. 
\end{corollary}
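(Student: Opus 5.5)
The plan is to reduce the statement to \Cref{thm:SpecialBiserial}. Since $\MagnitudeOfModuleCat{B}$ depends only on the Morita equivalence class of $B$, I will replace $B$ by its basic algebra. Because $k$ is algebraically closed, this basic algebra is a bound path $k$-algebra $kQ/I$. Its rank $n$ is the number of isomorphism classes of simple $B$-modules, and by Brauer theory this equals the number of irreducible Brauer characters of $B$. So it suffices to show $\MagnitudeOfModuleCat{B}=n$.

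The key input is the classical structure theory of blocks with cyclic defect group, due to Dade, Janusz, Kupisch and Gabriel--Riedtmann. Such a block $B$ is representation-finite; finiteness of representation type is in fact characterised by the Sylow (equivalently defect) group being cyclic, by Higman. Moreover, $B$ is Morita equivalent to a Brauer tree algebra. The Brauer tree has $e$ edges, where $e$ divides $p-1$, and carries an exceptional vertex of multiplicity $m=(p^d-1)/e$.

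Brauer tree algebras are special biserial: they are symmetric, every indecomposable projective $P$ has $\mathrm{rad}(P)/\mathrm{soc}(P)$ a direct sum of at most two uniserial modules, and the quiver with relations is given by the cyclic orderings around the vertices of the tree. This lets me invoke \Cref{thm:SpecialBiserial} on the basic bound path algebra. Alternatively, I can appeal directly to Fuller's biserial notion together with \cite{SW83}, as recalled in the remark before \Cref{lem:WW}.

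I also need to rule out degenerate cases. If $d=0$ the block is simple, but the hypotheses allow this: the defect group is trivial, $B$ is a matrix algebra, and $\MagnitudeOfModuleCat{B}=1=n$ directly, or by \Cref{cor:Nakayama}. If $d\geq 1$, the theorem applies.

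I expect the only real obstacle to be citing the correct form of the structure theorem, namely that the \emph{basic} algebra of $B$ is a special biserial bound path algebra over $k$. This rests on the explicit presentation of Brauer tree algebras by quiver and relations. The relations include commutativity-type relations $C_v^{m_v}-C_w^{m_w}$ for cycles around adjacent vertices, plus zero relations. With these relations the defining conditions of \Cref{def: special biserial and string} can be verified by inspection. Everything else follows formally from the earlier results.
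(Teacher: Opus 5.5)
Your proposal is correct and follows essentially the same route as the paper: pass to a Brauer tree algebra up to Morita equivalence, observe that it is a representation-finite special biserial bound path algebra, apply \Cref{thm:SpecialBiserial}, and identify the rank with the number of irreducible Brauer characters. The differences are minor. You check special biseriality directly from the quiver-with-relations presentation, where the paper cites the characterisation of Brauer tree algebras as the representation-finite symmetric biserial algebras. You also treat the defect-zero case separately, which the paper leaves implicit.
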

\begin{proof}
    The $k$-algebra $B$ is a Brauer tree $k$-algebra (see \cite[Theorem 17.1]{Alp86}). Since the class of Brauer tree $k$-algebras is known to coincide with the class of representation-finite symmetric biserial $k$-algebras \cite{GR79,SW83}, it follows from \Cref{thm:SpecialBiserial} that $\MagnitudeOfModuleCat{B}$ is equal to the rank of $B$. To conclude, we use that there are as many indecomposable projective $B$-modules as there are irreducible Brauer characters of $B$  \cite{Dad66,Jan69}.
\end{proof}

Representation-finite special biserial algebras appear frequently in representation theory. Various examples are listed in our next result (cf. \cite[§6.1]{CM19}).

\begin{corollary}
Let $n\geq 2$ and let $B$ be taken from any of the classes of $k$-algebras listed below. Then $\MagnitudeOfModuleCat{B}=n$. 
    \begin{enumerate}
        \item When $k$ is algebraically closed, a representation-finite block of a Schur $k$-algebra admitting $n$ simple modules up to isomorphism.
        \item\label{cor:gendoWes} When $k$ is of characteristic $0$, a block of a Temperley--Lieb $k$-algebra admitting $n$ simple modules up to isomorphism.
        \item\label{cor:gendoMar}  When $k$ is of characteristic $0$, a block of a partition $k$-algebra admitting $n$ simple modules up to isomorphism.
        \item For $k=\mathbb{C}$, the principal block of the parabolic BGG category associated to the pair $(\mathfrak{gl}_n(\mathbb{C}),\mathfrak{p})$, where $\mathfrak{p}$ is the parabolic subalgebra of $\mathfrak{gl}_n(\mathbb{C})$ given by the sum of its Borel subalgebra and Levi subalgebra. 
    \end{enumerate}
\end{corollary}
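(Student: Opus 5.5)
The plan is to reduce everything to \Cref{thm:SpecialBiserial}. In each of the four classes, the algebra $B$ is known to be a representation-finite special biserial algebra, up to Morita equivalence, with exactly $n$ simple modules up to isomorphism. Magnitude of module categories is a Morita invariant, so we may take a basic representative. Over an algebraically closed field (cases (1), (4) with $k=\mathbb{C}$), a basic algebra is a bound path algebra. In cases (2) and (3), $k$ has characteristic $0$ but need not be algebraically closed. For these I would use that the relevant blocks are defined over $\mathbb{Q}$ and split, so their basic algebras are bound path algebras already over $k$. Alternatively, one could pass to $\overline{k}$ via \Cref{rem:fielddoesntmatter}\eqref{rem:fdnm_magnitude}, checking that extension of scalars preserves the block structure and the number of simples. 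Once $B$ is a representation-finite special biserial bound path algebra, its rank is the number of simples, namely $n$. \Cref{thm:SpecialBiserial} then gives $\MagnitudeOfModuleCat{B}=n$.

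The cases would be handled by citing the literature, following the classes collected in \cite[§6.1]{CM19}.
\begin{enumerate}
\item For Schur algebras, the representation-finite blocks are Morita equivalent to the algebras $\mathcal{A}_n$ of Erdmann/Donkin–Reiten. These are given by a linear quiver with arrows in both directions, modulo the relations $\alpha\alpha=0$, $\beta\beta=0$ and $\alpha\beta=\beta\alpha$ (suitably truncated at the ends). They are visibly special biserial.
\item For Temperley–Lieb algebras in characteristic $0$, the non-semisimple blocks are Morita equivalent to the same zigzag-type algebras $\mathcal{A}_n$, as in Westbury's description. Semisimple blocks with one simple module are excluded by $n\ge 2$, but in any case they would also satisfy the formula.
\item For partition algebras in characteristic $0$, Martin's description gives blocks that are again of this quasi-hereditary zigzag form.
\item For the principal block of parabolic category $\mathcal{O}$ for $(\mathfrak{gl}_n,\mathfrak{p})$ with $\mathfrak{p}$ as stated, i.e.\ Levi $\mathfrak{gl}_{n-1}\times\mathfrak{gl}_1$ (a maximal parabolic), the basic algebra is the well-known $A_n$-zigzag/Khovanov-type quasi-hereditary algebra with $n$ simples. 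Its quiver is the double of $A_n$ with commutativity and zero relations, so it is special biserial and representation-finite.
\end{enumerate}

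The main obstacle is bookkeeping rather than mathematics. For each class, one must confirm from the cited sources both that the basic algebra satisfies the conditions of \Cref{def: special biserial and string} and that it is representation-finite. For the latter, I would invoke that these algebras are known to be of finite type, for instance because the string combinatorics yields finitely many strings and there are no bands, the zigzag relations killing cycles. One must also match the number of simples with $n$ in each case. If representation-finiteness of some class is not explicitly recorded, I would check directly that the commutativity relation $\alpha\beta=\beta\alpha$ together with $\alpha^2=\beta^2=0$ leaves no band, so only finitely many string modules exist.
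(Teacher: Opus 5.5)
Your proposal is correct and follows the paper's proof. In each case you identify $B$, up to Morita equivalence, with the representation-finite special biserial bound path algebra on the doubled $A_n$ quiver with zero and commutativity relations, citing the literature, and then apply \Cref{thm:SpecialBiserial}. Your extra care about Morita equivalence versus isomorphism, and about splitness when $k$ has characteristic $0$ but is not algebraically closed in cases (2) and (3), addresses points the paper's proof passes over.
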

\begin{proof}
    In all cases except for semisimple instances of $B$ in \eqref{cor:gendoWes} and \eqref{cor:gendoMar}, the $k$-algebra $B$ is isomorphic to the bound path $k$-algebra with Gabriel quiver 
    \begin{equation*}
     \begin{tikzpicture}[x=0.045cm,y=-0.025cm]
  \clip (0,235) rectangle + (240,55);
  \begin{scope}[every node/.style={inner sep=4pt}]
   \node (1) at (20,261) {$1$};
   \node (2) at (70,261) {$2$};
   \node (d) at (120,261) {$\cdots$};
   \node (n1) at (170,261) {$n-1$};
   \node (n) at (220,261) {$n$};
  \end{scope}
  \begin{scope}[every node/.style={fill=white,font=\footnotesize},every path/.style={-{Latex[length=1.5mm,width=1mm]}}]
    \path (1) edge["$a_1$",bend left=10] (2);
    \path (2) edge["$a_2$",bend left=10] (d);
    \path (d) edge["$a_{n-2}$",bend left=10] (n1);
    \path (n1) edge["$a_{n-1}$",bend left=10,pos=0.4] (n);
    \path (2) edge["$b_{1}$",bend left=10] (1);
     \path (d) edge["$b_{2}$",bend left=10] (2);
     \path (n1) edge["$b_{n-2}$",bend left=10] (d);
    \path (n) edge["$b_{n-1}$",bend left=10,pos=0.55] (n1);
  \end{scope}
 \end{tikzpicture}
 \end{equation*}
    bound by the relations $a_{n-1}b_{n-1}$, $a_{i-1}b_{i-1} - b_ia_i$, $a_{i}a_{i-1}$, and $b_{i-1}b_i$, where $2\leq i\leq n$ \cite{Kue17,Wes95,Mar96,BS11}. Since $B$ is representation-finite and special biserial in all cases, the results follow from \Cref{thm:SpecialBiserial}.
\end{proof}

\section{Hereditary path algebras and radical square zero algebras}\label{sec:Dynkin}

In this section, we will give a formula for the magnitude of the module category of a representation-finite path algebra over $k$. These are exactly the path $k$-algebras of disjoint unions of \textit{Dynkin quivers} \cite{Gab72}, i.e.\! quivers whose underlying unoriented graphs are simply-laced Dynkin diagrams. They are of the following form:
\begin{center}
    \begin{equation}\tag{$A_n$, $n\geq 1$}
        \begin{tikzcd}
        1 \arrow[r, no head] & 2 \arrow[r, no head] & \cdots \arrow[r, no head] & n
    \end{tikzcd}
    \end{equation}

    \begin{equation}\tag{$D_n$, $n\geq 4$}
        \begin{tikzcd}[row sep=0.5em]
        1 \arrow[rd, no head] &&&&  \\
        & 3 \arrow[r, no head] & \cdots \arrow[r, no head] & n-1 \arrow[r, no head] & n \\
        2 \arrow[ru, no head] &&&&
    \end{tikzcd}
    \end{equation}

    \begin{equation}\tag{$E_6$}
        \begin{tikzcd}
        1 \arrow[r, no head] & 2 \arrow[r, no head] & 3 \arrow[r, no head] \arrow[d, no head] & 4 \arrow[r, no head] & 5 \\
        & & 6 & & 
    \end{tikzcd}
    \end{equation}

    \begin{equation}\tag{$E_7$}
        \begin{tikzcd}
        1 \arrow[r, no head] & 2 \arrow[r, no head] & 3 \arrow[r, no head] \arrow[d, no head] & 4 \arrow[r, no head] & 5 \arrow[r, no head] & 6 \\
        & & 7 & & & 
    \end{tikzcd}
    \end{equation}

    \begin{equation}\tag{$E_8$}
        \begin{tikzcd}
        1 \arrow[r, no head] & 2 \arrow[r, no head] & 3 \arrow[r, no head] \arrow[d, no head] & 4 \arrow[r, no head] & 5 \arrow[r, no head] & 6 \arrow[r, no head] & 7 \\
        & & 8 & & & & 
    \end{tikzcd}
    \end{equation}
\end{center}
By \Cref{rem:chi_indmod}\eqref{Splitting formula}, it suffices to compute $\MagnitudeOfModuleCat{kQ}$, where $Q$ is a Dynkin quiver. In the case where $k$ is algebraically closed, all connected representation-finite hereditary $k$-algebras are Morita equivalent to bound $k$-path algebras of Dynkin quivers. We first show that $\MagnitudeOfModuleCat{kQ}$ is independent of the orientation of $Q$. 

\begin{lemma}\label{lem:APR}
    Let $Q$ and $Q'$ be Dynkin quivers of the same Dynkin type. Then $\MagnitudeOfModuleCat{kQ} = \MagnitudeOfModuleCat{kQ'}$.
\end{lemma}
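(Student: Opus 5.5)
The title of the lemma suggests using APR tilting / reflection functors. Any two orientations of a Dynkin tree are connected by a finite sequence of sink-to-source reflections. It therefore suffices to treat the case where $Q'=\sigma_v Q$ is obtained from $Q$ by reversing all arrows at a sink $v$. By \Cref{rem:chi=chiAR}, $\MagnitudeOfModuleCat{kQ}=\chi_{\mathrm{AR}}(\mathfrak{AR}^{kQ},\tau)$, so the claim reduces to comparing the Auslander--Reiten quivers of $kQ$ and $kQ'$.

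The plan is to use the classical description of how the Auslander--Reiten quiver changes under a reflection (Bernstein--Gelfand--Ponomarev, or APR tilting). The reflection functor $S_v^+\colon \leftmod{kQ}\to\leftmod{kQ'}$ restricts to an equivalence between the full subcategory of indecomposables other than the simple projective $S_v=P_v$, and the full subcategory of indecomposable $kQ'$-modules other than the simple injective $S'_v=I'_v$. Under this equivalence, irreducible maps and Auslander--Reiten sequences not involving $S_v$ (resp.\ $S'_v$) correspond to each other. Consequently, $\mathfrak{AR}^{kQ}\setminus\{P_v\}$ and $\mathfrak{AR}^{kQ'}\setminus\{I'_v\}$ are isomorphic as translation quivers, with matching meshes, except for the meshes starting at $P_v$ and ending at $I'_v$.

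Now count the changes. In $\mathfrak{AR}^{kQ}$, the vertex $P_v$ is simple projective, so it has no incoming arrows. Its outgoing arrows go to the indecomposable summands of $\tau^{-1}$-mesh middle terms, namely to $P_u$ for each neighbour $u$ of $v$ in $Q$ (there are $\deg v$ such arrows). There is exactly one mesh starting at $P_v$, the one ending at $\tau^{-1}P_v$, since $P_v$ is not injective because $Q$ is connected with more than one vertex; the one-vertex case is trivial. Removing $P_v$ therefore changes $N-A+E$ by $-1+\deg v-1$. Dually, in $\mathfrak{AR}^{kQ'}$, the vertex $I'_v$ is simple injective with $\deg v$ incoming arrows from the $I'_u$, no outgoing arrows, and exactly one mesh ending at it. Removing it changes the count by the same amount $-1+\deg v-1$. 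Since the remaining translation quivers agree, $\chi_{\mathrm{AR}}$ agrees.

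The main obstacle is justifying that the meshes of the remaining parts correspond exactly. In particular, the mesh ending at $\tau^{-1}P_v$ in $\leftmod{kQ}$ must become a non-mesh in $\leftmod{kQ'}$, and the new mesh ending at $I'_v$ must be accounted for, rather than some other mesh appearing or disappearing. I would handle this by invoking the APR-tilting result that $\Hom(T,-)$ sends almost split sequences in the torsion class to almost split sequences (see \cite[§VII.5]{ARS97}), and by counting meshes via \eqref{eq:E}: both algebras have the same rank $n$ and the same number $N$ of indecomposables (the positive roots), so $E=N-n$ for both. Given $E$, it then suffices to check $A=A'$. Arrows are obtained by deleting $\deg v$ arrows at $P_v$ and adding $\deg v$ arrows at $I'_v$, with the rest identified through $S_v^+$. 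Hence \Cref{lem:MagARQuiver} gives $2N-A-n=2N-A'-n$.
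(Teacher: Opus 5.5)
Your argument is correct, but it follows a different route from the paper's.

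\textbf{The paper's route.} The paper argues globally. It embeds $\mathfrak{AR}^{kQ}$ as a full subquiver of the Auslander--Reiten quiver $\mathfrak{C}_Q$ of the cluster category $\mathcal{C}_Q$. The complement is a copy of $Q$ (the shifted projectives). It is joined to $\mathfrak{AR}^{kQ}$ by $2|Q_1|$ arrows and contributes $2|Q_0|$ extra meshes. This gives $\MagnitudeOfModuleCat{kQ}=\chi_{\mathrm{AR}}(\mathfrak{C}_Q)-3(|Q_0|-|Q_1|)=\chi_{\mathrm{AR}}(\mathfrak{C}_Q)-3$. Orientation-independence is then inherited from the equivalence $\mathcal{C}_Q\simeq\mathcal{C}_{Q'}$.

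\textbf{Your route.} You argue locally and inductively, one sink reflection at a time via BGP reflection functors and APR tilting. This is precisely the alternative the paper mentions in \Cref{rem:tilting}.

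\textbf{Comparison.} Your approach uses only classical reflection-functor theory plus the fact that orientations of a tree are connected by sink reflections, with no cluster categories. The paper's approach avoids the induction and yields an orientation-free formula for $\MagnitudeOfModuleCat{kQ}$ in terms of $\chi_{\mathrm{AR}}(\mathfrak{C}_Q)$.

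\textbf{Tightening your argument.} Your final counting step makes the ``main obstacle'' unnecessary. Once $N$ and $n$ agree, the formula $2N-A-n$ only requires $A=A'$, so you never need the mesh correspondence.

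The arrow correspondence you do need follows from the equivalence $S_v^+$ together with two vanishing facts. For indecomposables $X\not\simeq P_v$ and $Y\not\simeq I'_v$, we have $\Hom_{kQ}(X,P_v)=0$ and $\Hom_{kQ'}(I'_v,Y)=0$. The reason is that a nonzero map onto the simple projective, or out of the simple injective, would split. Hence radical and radical-square morphisms among the remaining indecomposables are computed inside the subcategories, so irreducible maps match.
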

\begin{proof}
    Let $\mathcal{C}_{Q}$ denote the cluster category of $Q$ \cite{BMRRT06}. It is defined as the orbit category $\mathcal{D}^b(\leftmod{kQ})/{\tau^{-1}\Sigma}$, where $\mathcal{D}^b(\leftmod{kQ})$ is the bounded derived category of $\leftmod{kQ}$, $\Sigma$ is the suspension endofunctor on $\mathcal{D}^b(\leftmod{kQ})$, and $\tau$ is the Auslander--Reiten translation on $\mathcal{D}^b(\leftmod{kQ})$. The composite functor 
    \begin{equation*}
    \begin{tikzcd}
       \leftmod{kQ} \arrow[r,hook] &  \mathcal{D}^b(\leftmod{kQ}) \arrow[r] & \mathcal{C}_{Q}  
    \end{tikzcd}
    \end{equation*}
    induces a morphism of translation quivers $(\mathfrak{AR}^{kQ},\tau) \xrightarrow{} (\mathfrak{C}_{Q},\tau)$. Here, $(\mathfrak{C}_{Q},\tau)$ denotes the Auslander--Reiten quiver of $\mathcal{C}_{Q}$. The image of this composite functor identifies $\mathfrak{AR}^{kQ}$ with a full subquiver of $\mathfrak{C}_{Q}$. Also, the full subquiver $\mathfrak{P}$ of $\mathfrak{C}_{Q}$ spanned by the vertices outside this image is isomorphic to $Q$. Thus, there are $|Q_0|$ vertices and $|Q_1|$ arrows in $\mathfrak{P}$. Moreover, there are exactly $|Q_1|$ arrows from  $\mathfrak{AR}^{kQ}$ to $\mathfrak{P}$, and $|Q_1|$ arrows from $\mathfrak{P}$ to $\mathfrak{AR}^{kQ}$. Since there are no projective vertices in $(\mathfrak{C}_{Q},\tau)$, there are $2|Q_0|$ meshes in $(\mathfrak{C}_Q,\tau)$ that are not in $(\mathfrak{AR}^{\Lambda},\tau)$. In combination with \Cref{rem:chi=chiAR}, it follows that 
    \begin{align*}
        \MagnitudeOfModuleCat{kQ} = \chi_{\mathrm{AR}}(\mathfrak{AR}^{kQ},\tau) & =\chi_{\mathrm{AR}}(\mathfrak{C}_{Q},\tau) - |Q_0| + 3|Q_1| - 2|Q_0| \\  &= \chi_{\mathrm{AR}}(\mathfrak{C}_{Q},\tau) - 3(|Q_0|-|Q_1|) \\ &= \chi_{\mathrm{AR}}(\mathfrak{C}_{Q},\tau) - 3.
    \end{align*}
    An identical argument shows that $\MagnitudeOfModuleCat{kQ'} = \chi_{\mathrm{AR}}(\mathfrak{C}_{Q'},\tau) - 3$. Since the cluster categories $\mathcal{C}_{Q}$ and $\mathcal{C}_{Q'}$ are equivalent as triangulated categories (see \cite[pp. 576--577]{BMRRT06}), the translation quivers $(\mathfrak{C}_Q,\tau)$ and $(\mathfrak{C}_{Q'},\tau)$ are isomorphic. It follows that $\chi_{\mathrm{AR}}(\mathfrak{C}_{Q},\tau)=\chi_{\mathrm{AR}}(\mathfrak{C}_{Q'},\tau)$. We conclude that $\MagnitudeOfModuleCat{kQ} = \MagnitudeOfModuleCat{kQ'}$.
\end{proof}

Recall that the \textit{mesh $k$-category} of a translation quiver $(\mathfrak{T},\tau)$ is the free $k$-category of $\mathfrak{T}$ modulo the ideal generated by the morphisms of the form $\sum_{i} f_ig_i$ determined by each mesh, as displayed in \eqref{eq:mesh}.

\begin{propositiondefinition}\label{propdef:Coxeter}
    Let $Q$ be a Dynkin quiver of type $\Delta$ with $n$ vertices. The following positive integers are all equal. This integer $h_{Q}$ is the \textit{Coxeter number of $Q$}. Since it only depends on $\Delta$ (see \ref{propdef:Coxeter1}), we may also refer to it as the \textit{Coxeter number of $\Delta$}. 
    \begin{enumerate}[label=$h_{\arabic*}$]
    \setlength\itemsep{0.5em}
        \item\label{propdef:Coxeter1} $\coloneqq$ the order of a Coxeter element in an irreducible Coxeter group of type $\Delta$,
        \item\label{propdef:Coxeter2} $\coloneqq {2N \over n}$, where $N$ is the number of indecomposable $kQ$-modules up to isomorphism,
        \item\label{propdef:Coxeter3} $\coloneqq$ $\dim_k M + 1$, where $M$ is the indecomposable $kQ$-module of largest $k$-dimension,
        \item\label{propdef:Coxeter4} $\coloneqq$ $m_Q+1$, where $m_Q$ is the smallest positive integer such that any path of length $m_{Q}$ in the repetitive quiver $\Z Q$ is zero in the mesh $k$-category of $\Z Q$.
    \end{enumerate}
\end{propositiondefinition}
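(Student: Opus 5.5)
The plan is to take one definition as the anchor and identify each of the others with it using classical results from Lie theory and Auslander--Reiten theory. The anchor is \ref{propdef:Coxeter1}, since it visibly depends only on $\Delta$. The invariance of $h_1$ under changing the orientation is built into its definition. I will also use that $\Z Q$ is independent of the orientation (\cite[Satz on p.~204]{Rie80}, quoted after \Cref{def:repquiver}).

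\textbf{Definitions $h_1$, $h_2$, $h_3$.} I will use Gabriel's theorem \cite{Gab72}: the map $M\mapsto \underline{\dim}\,M$ is a bijection from isoclasses of indecomposable $kQ$-modules onto the positive roots $\Phi^+$ of the root system of type $\Delta$.
\begin{itemize}
\item For $h_1=h_2$, I will invoke the classical fact (Coxeter; Bourbaki, Lie VI \S1.11) that $|\Phi| = nh$, where $h$ is the order of a Coxeter element. Since $|\Phi^+| = nh/2$, this gives $N = nh/2$, that is, $h = 2N/n$.
\item For $h_1=h_3$, I will invoke Kostant's theorem: the highest root $\theta$ has height $\mathrm{ht}(\theta)=h-1$. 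The $k$-dimension of $M$ is the height of the root $\underline{\dim}\,M$, and every positive root has height at most $\mathrm{ht}(\theta)$. Hence the largest dimension of an indecomposable module is $h-1$, which gives \ref{propdef:Coxeter3}.
\end{itemize}
Both steps reduce to citing known results, so I expect no difficulty there.

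\textbf{Definition $h_4$.} This is the genuinely representation-theoretic part. I will use Happel's theorem: the mesh category $k(\Z Q)$ is equivalent to $\mathrm{ind}\,\mathcal{D}^b(\leftmod{kQ})$. Under this equivalence, a path $p$ from $x$ to $y$ is nonzero in $k(\Z Q)$ exactly when the corresponding morphism between indecomposable objects is nonzero. I will then use the standard hammock description of $\Hom(X,-)$ for indecomposable $X$: its support is contained in the region of $\Z Q$ between $X$ and the Serre dual $\nu X = \tau\Sigma X$, and $\Hom(X,\nu X)\cong D\Hom(X,X)\cong k$.

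The upper bound will follow from computing the length of paths from $x$ to $\nu x$ in $\Z Q$. To do this, choose an orientation in which $X=P_i$, so that $\nu X = I_i$. The length can then be read off in the Auslander--Reiten quiver of $kQ$ via the additive function $\dim_k\Hom(P_i,-)$. The goal is to show that every path from $P_i$ to $I_i$ has length $h-2$. The intended route is to sum the lengths coming from the $\tau$-orbits; equivalently, use that $\nu$ acts on $\Z Q$ as $\tau^{-(h-2)/2}$ up to a diagram automorphism, reading the exponent suitably when $h$ is odd. It follows that every path of length $h-1$ leaves the hammock and is therefore zero, so $m_Q\le h-1$.

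For the lower bound, I will show that the nonzero morphism $P_i\to I_i$ in the socle of the hammock is a sum of paths of length $h-2$. Since this morphism is nonzero, at least one of those paths is nonzero in $k(\Z Q)$. This gives $m_Q \ge h-1$, hence $m_Q = h-1$ and $h_4 = h$.

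\textbf{Main obstacle.} I expect the hard step to be the identity that the distance from $x$ to $\nu x$ in $\Z Q$ equals $h-2$ uniformly in types $A$, $D$, $E$. If a uniform argument proves awkward, I would fall back on the Gabriel--Riedtmann description of $\nu$ on $\Z\Delta$ and check the five families case by case.
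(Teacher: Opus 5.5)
Your argument for $h_1=h_2=h_3$ is the paper's: Gabriel's theorem together with $|\Phi|=nh$ and $\mathrm{ht}(\theta)=h-1$.

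For $h_4$ the routes differ. The paper gives no argument and simply cites \cite{BLR81}. You derive it from Happel's equivalence $k(\Z Q)\simeq\mathrm{ind}\,\mathcal{D}^b(\leftmod{kQ})$ and Serre duality, and your route is sound:
\begin{enumerate}
\item[(i)] Morphisms in $k(\Z Q)$ are linear combinations of paths, so $\Hom(X,Y)\neq 0$ forces paths $X\rightsquigarrow Y\rightsquigarrow\nu X$.
\item[(ii)] Since $Q$ is a tree, there is a function $\ell$ on $(\Z Q)_0$ that increases by $1$ along every arrow. Hence all paths between two vertices have the same length, and every path longer than the distance from $X$ to $\nu X$ is zero.
\item[(iii)] $\Hom(X,\nu X)\cong k$ produces a nonzero path of exactly that length.
\end{enumerate}
Add the trivial remark that if all paths of length $m$ vanish, then so do all longer ones. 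With that, (iii) gives $m_Q\geq h-1$. Your route buys a conceptual explanation of $h_4$, namely that the longest nonzero path is $x\to\nu x$, at the cost of heavier machinery than a citation.

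Your stated main obstacle can be removed uniformly, with no case check and no change of orientation.
\begin{enumerate}
\item[(a)] $\nu$ induces an automorphism of the connected quiver $\Z Q$, so $c=\ell(\nu y)-\ell(y)$ does not depend on $y$.
\item[(b)] In $\mathfrak{AR}^{kQ}\subset\Z Q$, each $\tau$-orbit runs from a unique projective to a unique injective, and $\ell$ grows by $2$ under $\tau^{-1}$.
\item[(c)] Counting vertices orbit by orbit therefore gives $N=n+\tfrac12\sum_{x\in Q_0}\bigl(\ell(I_x)-\ell(P_x)\bigr)=n+\tfrac{nc}{2}$.
\item[(d)] Comparing with $N=nh/2$ from $h_2$ yields $c=h-2$.
\end{enumerate}
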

\begin{proof}
    Recall Gabriel's Theorem, namely that the dimension vectors of indecomposable $kQ$-modules are in one-to-one correspondence with the positive roots in the root system of type $\Delta$ \cite{Gab72}. The equalities \ref{propdef:Coxeter1}=\ref{propdef:Coxeter2}=\ref{propdef:Coxeter3} then follow from standard results on Coxeter numbers (see \cite[VI §1.11 Proposition 33]{Bourbaki4a6} \cite[Proposition 3.18 and Theorem 3.20]{Hum92}, \cite[Theorem 7.4(4)]{Hub25}). The fact that \ref{propdef:Coxeter4} is equal to the preceding quantities is also known \cite{BLR81}.
\end{proof}

\begin{theorem}\label{thm:Dynkin}
    Let $Q$ be a Dynkin quiver of type $\Delta$.  Then $\MagnitudeOfModuleCat{kQ} = h_{Q}-1$, where $h_{Q}$ is the Coxeter number of $Q$.
    Explicitly:
    \begin{center}
        \centering
        \begin{tabular}{|c|c|c|}
        \hline
            \text{Dynkin type $\Delta$} & \text{Coxeter number $h_{Q}$} & $\MagnitudeOfModuleCat{kQ}$ \\ \hline
            $A_n$ & $n+1$ & $n$ \\
            $D_n$ & $2n-2$  & $2n-3$  \\
            $E_6$ & $12$ & $11$  \\
            $E_7$ & $18$ & $17$  \\
            $E_8$ & $30$ & $29$  \\ \hline
        \end{tabular}
     \end{center}
\end{theorem}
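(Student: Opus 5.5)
By \Cref{lem:APR}, $\MagnitudeOfModuleCat{kQ}$ does not depend on the orientation of $Q$. We may therefore choose an orientation that makes the Auslander--Reiten quiver as transparent as possible, and then compute $\chi_{\mathrm{AR}}$ via \Cref{rem:chi=chiAR}. A direct route goes through \Cref{lem:MagARQuiver}, which gives $\MagnitudeOfModuleCat{kQ}=2N-A-n$. We need to count the arrows $A$ of $\mathfrak{AR}^{kQ}$ in terms of $N$ and $n$. The Coxeter number then enters through the characterisation $h_Q=2N/n$ in \Cref{propdef:Coxeter}.

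To count arrows, recall that $\mathfrak{AR}^{kQ}$ is a full subquiver of $\Z Q^{\mathrm{op}}$, closed under predecessors, whose vertices are the $\tau$-orbits of the indecomposable projectives $P_x$. Each $\tau$-orbit corresponds to a vertex $x$ of $Q$ and has length $r_x$, so $N=\sum_x r_x$. Every arrow of $\Z Q^{\mathrm{op}}$ lies over an edge of $Q$. For a fixed edge $x - y$, the arrows of $\mathfrak{AR}^{kQ}$ over it alternate between the two orbits: $(d,x)\to(d',y)\to(d'',x)\to\cdots$. Their number is therefore $r_x+r_y-1$, provided the restriction to the two orbits is a connected "zigzag" running from the first projective to the last injective. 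This holds because $\mathfrak{AR}^{kQ}$ is convex in $\Z Q^{\mathrm{op}}$: it is the region between the projective slice and the injective slice. Summing over the edges of the tree $Q$ gives
\[
A=\sum_{x - y}(r_x+r_y-1)=\sum_x \deg(x)\,r_x-(n-1).
\]

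The main obstacle is evaluating $\sum_x \deg(x)r_x$. Here I would use the Coxeter-theoretic facts about orbit lengths. Choose a bipartite orientation, in which every vertex is a sink or a source. Then the orbits pair up under the Nakayama permutation $\nu$ on $Q_0$ induced by $\nu(P_x)=I_{\nu x}$, with
\[
r_x+r_{\nu x}=h_Q+\epsilon,
\]
where $\epsilon\in\{0,1\}$ is a correction depending on whether $x$ and $\nu x$ lie on the same side of the bipartition. Since $\nu$ is a graph automorphism of $Q$ (either the identity or the diagram involution), it preserves degrees. Pairing $x$ with $\nu x$ in $\sum_x\deg(x)r_x$ reduces the sum to $\tfrac{1}{2}\sum_x\deg(x)(h_Q+\epsilon_x)$. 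Using $\sum_x\deg(x)=2(n-1)$ and checking the $\epsilon$-correction case by case ($A_n$ with $n$ even or odd, $D_n$, $E_6$, $E_7$, $E_8$), we should obtain $\sum_x\deg(x)r_x=(n-1)h_Q+\delta$ for an explicit small term $\delta$. Substituting $A$ and $N=nh_Q/2$ into $2N-A-n$ should then give $h_Q-1$.

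If the correction terms prove messy, the fallback is to compute case by case with linear orientations (and analogous explicit orientations for $D$ and $E$), where the orbit lengths $r_x$ are explicitly known. This reduces the proof to a finite check against the table in the statement: for instance, for $A_n$ with linear orientation, $r_x=n+1-x$, $N=n(n+1)/2$ and $A=n(n-1)$, giving $2N-A-n=n$. An alternative conceptual route uses the cluster category $\mathcal{C}_Q$ from the proof of \Cref{lem:APR}. We would show that $\chi_{\mathrm{AR}}(\mathfrak{C}_Q,\tau)=h_Q+2$ by exhibiting the AR quiver of $\mathcal{C}_Q$ as $\Z Q/\langle\tau^{-1}\Sigma\rangle$, with $(h_Q+2)/2$ copies of $Q$ per fundamental domain up to twist, and then computing $N-A+E$ as in \Cref{lem:Magnitude of repetitions}.
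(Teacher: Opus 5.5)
Your reduction is correct as far as it goes, and it is a genuinely different route from the paper's. By convexity, $A=\sum_{\{x,y\}\in Q_1}(r_x+r_y-1)=\sum_x\deg(x)\,r_x-(n-1)$. Together with $N=nh_Q/2$, this shows the theorem is equivalent to $\sum_x\deg(x)\,r_x=(n-1)h_Q$.

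\textbf{The gap.} That identity is the whole content of the theorem, and your proposal does not prove it. You quote the pairing $r_x+r_{\nu x}=h_Q+\epsilon$ with an unspecified correction and conclude only that things ``should'' work out. As written, the plan does not determine the answer, since any nonzero $\epsilon_x$ would change it. In fact there is no correction, and here is how to show it.

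Take the bipartite orientation and use coordinates $(t,x)$ on $\Z Q^{\mathrm{op}}$ with $\tau(t,x)=(t-2,x)$ and an arrow $(t,x)\to(t+1,y)$ for each edge $\{x,y\}$. Then $P_x=(p(x),x)$, where $p(x)=0$ for sinks and $p(x)=1$ for sources. In $\mathcal{D}^b(\leftmod{kQ})$ you have $I_x=\nu P_x\cong\Sigma\tau P_x$. The shift $\Sigma$ acts on $\Z Q^{\mathrm{op}}$ by $(t,x)\mapsto(t+h_Q,\psi x)$, where $\psi$ is the diagram involution of $\Delta$. This refines $\Sigma^2\cong\tau^{-h_Q}$, and it is the same kind of input the paper takes from Gabriel. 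Hence $I_{\psi y}$ sits at $(p(\psi y)+h_Q-2,\,y)$, so
\[
r_y=\tfrac12\bigl(h_Q+p(\psi y)-p(y)\bigr),\qquad r_y+r_{\psi y}=h_Q .
\]
This holds exactly in every type, including $A_{2m}$, where $y$ and $\psi y$ lie on opposite sides of the bipartition.

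The relevant permutation is $\psi$, which records which injective ends the $\tau$-orbit of $P_y$. It is not given by ``$\nu P_x=I_{\nu x}$'', since $\nu P_x$ is always $I_x$. Because $\psi$ preserves degrees, pairing gives $\sum_x\deg(x)r_x=(n-1)h_Q$. Then $A=(n-1)(h_Q-1)$ and $2N-A-n=h_Q-1$.

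\textbf{Comparison with the paper.} The paper avoids orbit-length bookkeeping. For types $D$ and $E$ it orients all arrows away from the trivalent vertex, an orientation fixed by $\psi$. Then every $\tau$-orbit has length $h_Q/2$, so $\mathfrak{AR}^{kQ}\cong\frac{h_Q}{2}Q^{\mathrm{op}}$, and \Cref{lem:Magnitude of repetitions} gives $2\cdot\frac{h_Q}{2}-1$ at once. With that orientation your own arrow formula collapses the same way, and the ``main obstacle'' disappears.

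The price is that type $A$ needs a separate argument via \Cref{thm:SpecialBiserial}. For $A_{2m}$ the Coxeter number is odd, so no orientation gives equal orbit lengths. Your bipartite computation, once the pairing is justified, treats all types uniformly.

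Your cluster-category fallback needs the same input, namely how $\tau^{-1}\Sigma$ acts on $\Z Q$. For $A_{2m}$ this automorphism can only be written as $\sigma\tau^{-r}$ with $\sigma$ of infinite order. So \Cref{lem:Self-injective_eu} does not apply verbatim, and you would need a double-cover argument, since $(\tau^{-1}\Sigma)^2=\tau^{-(h_Q+2)}$.
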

\begin{proof}
    Suppose first that $Q$ is of type $A_n$ for some $n\geq 1$. Our claim then follows from \Cref{thm:SpecialBiserial}. By \Cref{lem:APR}, we know that the magnitude of the module category of $kQ$ is not affected by re-orientation of the arrows in $Q$. In the remaining cases, we may therefore re-orient $Q$ so that all arrows point away from the vertex of degree 3. The Auslander--Reiten quiver of $\leftmod{kQ}$ then becomes isomorphic to {the ${h_{Q}\over 2}$-bounded repetitive quiver} ${h_{Q}\over 2}Q^\mathrm{op}$ (see \cite{Gab72}). By \Cref{rem:chi=chiAR}, the quantity $\MagnitudeOfModuleCat{kQ}$ can thus be calculated as the Auslander--Reiten--Euler characteristic of ${h_{Q}\over 2}Q^\mathrm{op}$. It now follows from \Cref{lem:Magnitude of repetitions} that
    \[
    	 \MagnitudeOfModuleCat{kQ} = \chi_{\mathrm{AR}}\left({h_{Q}\over 2}Q^\mathrm{op}\right) = 2\left({h_{Q} \over 2}\right) -1 = h_{Q}-1. \qedhere
    \]
\end{proof}

From the equivalent definitions of Coxeter numbers in \Cref{propdef:Coxeter}, various interpretations of $\MagnitudeOfModuleCat{kQ}$ follow. These include the quanities $\dim_k M$ and $m_Q$, as defined above. We offer another interpretation in terms of a ``weighted number of $\tau$-orbits'' in the Auslander--Reiten quiver.

\begin{lemma}\label{remark:Coxeter5}
    Let $Q$ be a Dynkin quiver of type $\Delta$. Consider the quantity
    \begin{equation}\label{def:w}
        w(Q) \coloneqq \sum_{\mathcal{O}} \sup\limits_{M\in \mathcal{O},\, 1\leq i\leq n } m_i,
    \end{equation}
    where the sum is indexed over the $\tau$-orbits of $\leftmod{kQ}$ and $(m_i)_i$ is the dimension vector of $M$. Then $w(Q)=\MagnitudeOfModuleCat{kQ}$.
\end{lemma}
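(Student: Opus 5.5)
The plan is to show that $w(Q)$ depends only on the Dynkin type $\Delta$, and then to compare it with $h_Q-1$, which by \Cref{thm:Dynkin} equals $\MagnitudeOfModuleCat{kQ}$. By \ref{propdef:Coxeter3}, $h_Q-1=\dim_k M_{\max}=\sum_j \theta_j$, where $\theta=(\theta_j)_j$ is the dimension vector of the indecomposable module $M_{\max}$ of largest dimension, i.e.\ the highest root. Each $\tau$-orbit of $\leftmod{kQ}$ contains exactly one indecomposable projective $P_j$. So the orbits $\mathcal{O}_j$ are indexed by $Q_0$, and it suffices to prove
\[
\sup_{M\in\mathcal{O}_j,\,1\leq i\leq n} m_i \;=\; \theta_j \qquad\text{for every } j\in Q_0,
\]
after possibly relabelling the orbits by a permutation that preserves $\theta$. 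Summing over $j$ then gives $w(Q)=\sum_j\theta_j=h_Q-1=\MagnitudeOfModuleCat{kQ}$.

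To get orientation independence, I will write $m_i=\dim_k\Hom_{kQ}(P_i,M)$. Then the quantity $\sup_{M\in\mathcal{O}_j} m_i$ is the maximum of the hammock function $y\mapsto\dim_k\Hom(P_i,y)$ on the $\tau$-orbit of $P_j$, computed in the mesh category of $\Z Q$. Here I use that $\mathfrak{AR}^{kQ}$ is the full subquiver of $\Z Q$ between the projectives and the injectives, and that $\Hom(P_i,-)$ is supported between $P_i$ and $I_i=\nu P_i$, hence entirely inside the module category. Since $\Z Q\cong\Z\Delta$ independently of the orientation \cite{Rie80}, and the hammock starting at a vertex in the orbit of $i$ is determined by $i$ up to a shift by $\tau$, the double maximum over $i$ and over the orbit of $j$ is an invariant of $\Delta$. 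It is then also independent of which slice of $\Z\Delta$ we regard as the projectives. Hence $w(Q)=w(Q')$ for $Q'$ of the same type, exactly as in \Cref{lem:APR} for the magnitude.

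With this reduction, I will choose orientations conveniently. In type $A_n$ every indecomposable is thin, so each of the $n$ orbits contributes $1$, and $w=n=h-1$. In types $D$ and $E$, I will orient all arrows away from the branch vertex, as in the proof of \Cref{thm:Dynkin}, so that $\mathfrak{AR}^{kQ}\cong \frac{h_Q}{2}Q^{\mathrm{op}}$. The dimension vectors can then be computed by the additive (knitting) rule along meshes. For $D_n$, the two short-arm orbits and the end-of-long-arm orbit only contain modules with entries at most $1$, while the orbit of the vertex at distance $d\geq 1$ from the end of the long arm contains a module with an entry equal to $2$ (e.g.\ the one attaining $\theta$). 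For $E_6,E_7,E_8$ it is a finite knitting computation: for each orbit, record the largest coordinate appearing, and check that these values sum to $11,17,29$, respectively.

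I expect the main obstacle to be a uniform proof of the orbitwise identity $\sup_{\mathcal{O}_j}=\theta_j$, rather than a mere sum check. A natural attempt is to argue that, for fixed $j$, the hammock $\dim\Hom(x,-)$ is largest along the orbit of $j$ when $x$ lies on the slice where $M_{\max}$ sits, and that its value at $M_{\max}$ is $\theta_j$. Failing a clean argument, I would fall back on the explicit knitting tables for $E_6,E_7,E_8$, which suffice for the statement since only the sum is needed.
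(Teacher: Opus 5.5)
Your proposal is correct, and its skeleton is the paper's: show that $w(Q)$ depends only on $\Delta$, then match the orbit suprema with the coefficients $\theta_j$ of the highest root (the paper's $\delta_i$) and sum to $\dim_k M_{\max}=h_Q-1$. The two arguments differ mainly in the invariance step.

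\emph{The paper's route.} It passes between orientations by sink reflections and APR tilting, arguing that the Auslander--Reiten quiver only changes by moving a simple module to another $\tau$-orbit.

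\emph{Your route.} You write the supremum over $\mathcal{O}_j$ as $\max_i H(i,j)$. Here $H(i,j)$ is the largest value of $\dim_k\Hom(x,y)$ in the mesh category of $\Z\Delta$, over $x$ in the $i$-th and $y$ in the $j$-th $\tau$-orbit. This quantity is visibly intrinsic to $\Z\Delta$. Your argument also supplies a justification that the paper's step leaves implicit. APR tilting at a sink $i$ replaces the coordinate $\dim_k\Hom(P_i,-)$ by $\dim_k\Hom(\tau^{-1}P_i,-)$, so the dimension vectors of the surviving modules do change. What guarantees that the orbit suprema do not change is exactly the $\tau$-shift invariance of hammock maxima over whole $\Z\Delta$-orbits, which you make explicit.

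\emph{The computational part.} You fix the orientation away from the branch vertex and treat type $A$ by thinness, whereas the paper checks a multiset bijection in an arbitrary orientation. Both leave the $E$-type tables to be carried out. You also leave the general $D_n$ pattern unproved, although it is correct: it is the case $\Delta=D_n$ of the identity below.

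In fact, your framework removes the obstacle you anticipate, so no tables are needed. Let $\mathbb{S}$ be the Serre functor of $\mathcal{D}^b(\leftmod{kQ})$, which acts on $\Z\Delta$.
\begin{itemize}
\item Upper bound. Given $y$ in the $j$-th orbit, choose a section through $y$ as the projective slice of some orientation $Q'$, so that $y=P'_j$. Then $\dim_k\Hom(x,y)=\dim_k\Hom(P'_j,\mathbb{S}x)$, which is either $0$ or the $j$-th coordinate of a positive root. Hence it is at most $\theta_j$.
\item Lower bound. Take $z=P'_j$ in the $j$-th orbit, and let $y'$ be the vertex of the largest indecomposable $kQ'$-module. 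Then $\dim_k\Hom(\mathbb{S}^{-1}y',z)=\dim_k\Hom(z,y')=\theta_j$.
\end{itemize}
Therefore $\max_i H(i,j)=\theta_j$, i.e.\ $\sup_{\mathcal{O}_j}=\theta_j$ for every $j$ and every orientation, with no relabelling needed. This gives $w(Q)=\sum_j\theta_j=h_Q-1$ uniformly, without any case distinction.
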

\begin{proof}
    Recall that a \textit{reflection} at a sink $i$ in $Q$ reorients $Q$ by reversing all arrows incident to $i$. Denote this reflection by $Q_i$. Any two orientations of $\Delta$ can be related by a finite sequence reflections \cite[Theorem 1.2(1)]{BGP73}. To show that $w(Q)$ does not depend on the orientation of $Q$, it therefore suffices to show that $w(Q)=w(Q_i)$.
    In that case, one can use Auslander--Platzeck--Reiten tilting \cite{APR79} to reconstruct $\leftmod{kQ_i}$ from $\leftmod{kQ}$ by moving a simple module in the Auslander--Reiten quiver from one $\tau$-orbit to another. Thus, the terms on the right hand side of \eqref{def:w} remain the same, so $w(Q)=w(Q_i)$. 
    Choosing an arbitrary orientation of $Q$, it can hence be checked in a case-by-case manner that there is a bijection of multisets
    \begin{equation}
    \begin{tikzcd}
    	\{\delta_i \, |\, i\in Q_0\}\setminus\{1\} \arrow[r] & \left\{\sup\limits_{M\in \mathcal{O},\, 1\leq i\leq n } m_i \,\,\Bigg\vert\, 
    \,\mathcal{O}\right\},
    \end{tikzcd}
    \end{equation}
    where $\delta>0$ generates the radical of the Tits form of the extended Dynkin diagram of $\Delta$. We have that $\delta_i=\dim_k e_iM$, where $M$ is the indecomposable $kQ$-module of largest $k$-dimension (see \cite[§6]{Kac90}). Our claim now follows from \Cref{propdef:Coxeter} (see \ref{propdef:Coxeter3}) and \Cref{thm:Dynkin}.
\end{proof}

\begin{remark}\label{rem:tilting}
    \Cref{lem:APR} can also be proved by showing that Auslander--Platzeck--Reiten tilting of Dynkin quivers preserves the magnitude of module categories.
    However, it is not true that tilting preserves this quantity in general. As a counter-example, consider $k\big[ \begin{tikzcd}[row sep=0.5em]
        1 \arrow[r,"a"] & 2\arrow[r,"b"] & 3\arrow[r,"c"] & 4
    \end{tikzcd}\big]/(cba)$. This is a tilted $k$-algebra of type $D_4$, and also a Nakayama $k$-algebra. By  \Cref{thm:Dynkin,cor:Nakayama}, the magnitude of module categories is not preserved under tilting in this instance. Hence, the magnitude of module categories is not preserved by derived Morita equivalence \cite{Ric89}. Moreover, representation-finiteness is not necessarily preserved by derived equivalences. Hence, magnitude may not be defined for all algebras in a derived equivalence class. For example, there are several families of Nakayama $k$-algebras that are derived equivalent to representation-infinite hereditary $k$-algebras \cite{HS10,Fos24}.
\end{remark}

Next, we calculate the magnitude of the module category of \textit{radical square zero bound path $k$-algebras}. Recall that these are of the form $kQ/I$, where $I$ is generated by all paths of length 2 in $Q$. Let the vertex set of $Q$ be $\{1,\dots,n\}$. Then the \textit{separating quiver} $Q_{\mathrm{sep}}$ of $\Lambda$ is defined as the quiver with vertices $\{1^-,\dots,n^-,1^+,\dots,n^+\}$ and an arrow $i^- \xrightarrow{a}j^+$ for every arrow $i \xrightarrow{a}j$ in $Q$. 
We have that $\Lambda$ is representation-finite if and only if $Q_{\mathrm{sep}}$ is a disjoint union of Dynkin quivers (see \cite[Theorem X.2.6]{ARS97}). 

\begin{corollary}\label{cor:radsq0}
    Let $\Lambda$ be a representation-finite radical square zero bound path $k$-algebra of rank $n$. Then
    \begin{equation}\label{eq:radsq0}
        \MagnitudeOfModuleCat{\Lambda} = n+ \sum_{C} (h_C-|C_0|-1),
    \end{equation}
    where the sum is indexed over all connected components $C=(C_0,C_1)$ of $Q_{\mathrm{sep}}$ and $h_C$ denotes the Coxeter number of $C$. In particular, we have $\MagnitudeOfModuleCat{\Lambda}\geq n$.
    Moreover, the following assertions are equivalent.
    \begin{enumerate}
        \item\label{cor:radsq0_1} $\Lambda$ is a string algebra.
        \item\label{cor:radsq0_2} $\MagnitudeOfModuleCat{\Lambda} = n$.
        \item\label{cor:radsq0_3} $Q_{\mathrm{sep}}$ is a disjoint union of type $A$ Dynkin quivers.
    \end{enumerate}
\end{corollary}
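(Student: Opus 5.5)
The approach is the classical stable equivalence between a radical square zero algebra and a hereditary algebra. Let $\Lambda=kQ/I$ with $I$ generated by all paths of length two, and let $H=kQ_{\mathrm{sep}}$, a hereditary algebra which is representation-finite by the cited theorem. By \cite[Theorem X.2.4]{ARS97} (Gabriel's functor $M\mapsto (M/\mathrm{rad}M,\mathrm{rad}M)$), there is a stable equivalence $\underline{\mathrm{mod}}\,\Lambda\simeq\underline{\mathrm{mod}}\,H$. Moreover, the Auslander--Reiten quiver of $\Lambda$ with its projectives removed is isomorphic, as a translation quiver, to that of $H$ with its projectives removed. I would then compare $\chi_{\mathrm{AR}}$ of both Auslander--Reiten quivers via \Cref{lem:MagARQuiver}, i.e.\ via $2N-A-n$.

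First I count what is lost. The algebra $H$ has $2n$ indecomposable projectives. The $n$ projectives $P_{i^+}$ are the simples $S_{i^+}$. The $n$ projectives $P_{i^-}$ have radical $\bigoplus_{i\to j}S_{j^+}$, so each arrow of $Q$ contributes exactly one arrow into a projective. The algebra $\Lambda$ has $n$ projectives, and since $\Lambda$ is monomial, \Cref{formula with 1-meshes and 3-meshes} shows there are $|Q_1|$ arrows ending at projectives.

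Next I count the arrows starting at projectives. For $\Lambda$, the arrows out of $P_i$ go to the summands of $P_i/\mathrm{soc}P_i$ when $P_i$ is injective. More generally they lie in the AR sequence starting at $P_i$. I would use the standard description of the AR quivers under this stable equivalence to check that the non-projective parts, together with the arrows between non-projectives, match. The cleaner route is \Cref{formula with 1-meshes and 3-meshes}, which needs no arrow-matching beyond $\ell$. Non-projective vertices correspond bijectively and meshes correspond. The middle terms of a mesh, counted including projective middle terms, differ only by projectives, so the non-projective middle terms agree.

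So I would instead compute with the formula $\chi = 2N-A-n$ applied to both sides, tracking the difference $D=(2N_H-A_H-2n)-(2N_\Lambda-A_\Lambda-n)$. We have $N_H-2n = N_\Lambda-n$, since the non-projectives correspond. Arrows between non-projectives agree. The arrows that remain involve projectives: for $H$ there are $|Q_1|$ arrows into projectives and $|Q_1|$ arrows out of projectives (from $S_{j^+}$ to $P_{i^-}$). For $\Lambda$, the $|Q_1|$ arrows into projectives come from $S_j$, and there are arrows out of $P_i$ to $P_i/\mathrm{soc}$-type modules. Verifying that the projective-incident arrow counts match, up to an explicit offset, is the main obstacle. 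I expect to resolve it by writing $\mathrm{rad}P_i$ as semisimple and using that irreducible maps $P_i\to X$ come from the AR sequence $0\to P_i\to \dots$, which corresponds under the stable equivalence to the sequence starting at $P_{i^-}$ or $P_{i^+}$.

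The expected outcome is $\chi(\leftmod\Lambda)=\chi(\leftmod H)-n$. Then \Cref{rem:chi_indmod}\eqref{Splitting formula} and \Cref{thm:Dynkin} give $\chi(\leftmod H)=\sum_C(h_C-1)$, and $\sum_C|C_0|=2n$, which yields \eqref{eq:radsq0}. As a sanity check, for $Q$ a single vertex with no arrows, $Q_{\mathrm{sep}}$ is two copies of $A_1$, giving $1+2(2-1-1)=1$, which is correct.

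The inequality follows from $h_C\geq |C_0|+1$ for every Dynkin type, with equality exactly in type $A$ by the table in \Cref{thm:Dynkin}. This gives \eqref{cor:radsq0_2}$\Leftrightarrow$\eqref{cor:radsq0_3}. For \eqref{cor:radsq0_1}$\Leftrightarrow$\eqref{cor:radsq0_3}, note that a radical square zero algebra is monomial. It is a string algebra iff every vertex of $Q$ has in- and out-degree at most $2$, which holds iff every vertex of $Q_{\mathrm{sep}}$ has degree at most $2$. For a Dynkin union this is exactly type $A$.
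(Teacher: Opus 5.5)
\textbf{Verdict: genuine gap.} Your reduction is the same as the paper's, but the step on which it hinges is never proved.

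With $H=kQ_{\mathrm{sep}}$, Gabriel's functor gives $N_H=N_\Lambda+n$. So \Cref{lem:MagARQuiver} yields $\MagnitudeOfModuleCat{H}-\MagnitudeOfModuleCat{\Lambda}=n-(A_H-A_\Lambda)$, and everything depends on the two Auslander--Reiten quivers having the \emph{same number of arrows}. This is exactly the step you call ``the main obstacle'' and leave open. You even expect an ``offset'' in the arrow count. There is none: the whole difference $n$ comes from the vertices. The paper closes this step by citing that $\mathfrak{AR}^{kQ_{\mathrm{sep}}}$ has $N+n$ vertices and $A$ arrows \cite[Section X.2]{ARS97}.

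Two of your supporting claims are also wrong.
\begin{enumerate}
\item The Auslander--Reiten quivers with projectives removed are \emph{not} isomorphic as translation quivers in general, because radical square zero algebras typically have nodes. Take $1\to 2\to 3$ with $\mathrm{rad}^2=0$. Then $\tau_\Lambda S_1=S_2$, whereas $S_{1^-}$ and $S_{2^-}$ lie in different components of $\mathfrak{AR}^{H}$ and both have projective $\tau$-translates. Only the underlying quivers on the non-projective vertices match.
\item The arrows $S_{j^+}\to P_{i^-}$ are arrows \emph{into} projectives. The arrows in $\mathfrak{AR}^H$ from projectives to non-projectives are $P_{i^-}\to\tau^{-1}S_{j^+}$.
\end{enumerate}

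The gap can be closed in two steps.
\begin{enumerate}
\item \emph{Arrows between non-projectives.} For indecomposable non-projective $X,Y$, every map $X\to Y$ factoring through a projective lies in $\mathrm{rad}^2(X,Y)$. Hence $\mathrm{rad}(X,Y)/\mathrm{rad}^2(X,Y)$ can be computed in the stable category. The stable equivalence $\underline{\mathrm{mod}}\,\Lambda\simeq\underline{\mathrm{mod}}\,H$ is $k$-linear and bijective on non-projective indecomposables, so the numbers of arrows between non-projective vertices agree.
\item \emph{Arrows incident to projectives.} In any Auslander--Reiten quiver, the mesh identity $Y^-=(\tau Y)^+$ gives a bijection between arrows $P\to Y$ from a projective to a non-projective vertex and arrows $\tau Y\to P$ into a projective from a non-injective vertex.
\begin{itemize}
\item In $\mathfrak{AR}^\Lambda$ the arrows into projectives are the $|Q_1|$ arrows $S_j\to P_i$, one for each arrow $i\to j$ of $Q$.
\item In $\mathfrak{AR}^H$ they are the $|Q_1|$ arrows $S_{j^+}\to P_{i^-}$.
\end{itemize}
In both cases the source is the simple at a vertex that receives an arrow, hence non-injective. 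So both quivers have exactly $2|Q_1|$ arrows incident to projective vertices, and $A_H=A_\Lambda$.
\end{enumerate}

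With this in place, the rest of your argument goes through: the splitting formula, \Cref{thm:Dynkin}, $\sum_C|C_0|=2n$, the inequality, and the equivalences. Your direct proof of \eqref{cor:radsq0_1}$\Leftrightarrow$\eqref{cor:radsq0_3} via vertex degrees in $Q_{\mathrm{sep}}$ is a valid alternative to the paper's cycle of implications.
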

\begin{proof}
    Let $N$ denote the number of indecomposable $\Lambda$-modules up to isomorphism, and let $A$ denote the number of arrows in $\mathfrak{AR}^\Lambda$.
    It is well-known that $\mathfrak{AR}^{kQ_{\mathrm{sep}}}$ has $N+n$ vertices and $A$ arrows \cite[§X.2]{ARS97} \cite[Theorem 4.2]{Dro26}. By construction, the rank of $kQ_{\mathrm{sep}}$ is $2n$. 
    It follows from \Cref{lem:MagARQuiver} that
    \begin{align*}
        \MagnitudeOfModuleCat{kQ_{\mathrm{sep}}} - \MagnitudeOfModuleCat{\Lambda} &= (2(N+n)-A-2n) - (2N-A-n) = n. 
    \end{align*}
    This shows that $\MagnitudeOfModuleCat{\Lambda} = \MagnitudeOfModuleCat{kQ_{\mathrm{sep}}}-n$.
    Using \Cref{rem:chi_indmod}\eqref{Splitting formula} together with \Cref{thm:Dynkin} yields $\MagnitudeOfModuleCat{kQ_{\mathrm{sep}}}=\sum_C(h_C-1)$.
    Moreover, we observe that $\sum_{C} |C_0| = 2n$ since both quantities count the rank of $kQ_{\mathrm{sep}}$.
    Combining our observations shows \eqref{eq:radsq0}.

    We now prove that the assertions \eqref{cor:radsq0_1}--\eqref{cor:radsq0_3} are equivalent. Note that the representation-finiteness of $\Lambda$ is a necessary assumption.
    It was shown in \Cref{thm:SpecialBiserial} that \eqref{cor:radsq0_1} implies \eqref{cor:radsq0_2}. Also, it follows from \eqref{eq:radsq0} that \eqref{cor:radsq0_2} implies \eqref{cor:radsq0_3}. Finally, we show that \eqref{cor:radsq0_3} implies \eqref{cor:radsq0_1}. Note that 
    a radical square zero bound path $k$-algebra is a string algebra if and only if all vertices in the Gabriel quiver have in-degree and out-degree at most 2 (see \Cref{def: special biserial and string}). It follows that if $\Lambda$ is not a string algebra, then $Q_{\mathrm{sep}}$ necessarily contains a connected component which is a Dynkin quiver of type $D$ or $E$ (since $\Lambda$ was assumed to be representation-finite).
\end{proof}

\section{Self-injective algebras}\label{sec:SelfInj}

\begin{definition}
    A $k$-algebra $\Lambda$ is \textit{self-injective} if $\Lambda$ is injective as a left $\Lambda$-module. 
\end{definition}

Representation-finite self-injective bound path algebras have a well understood Auslander--Reiten theory. This will enable us to compute the magnitude of their module categories. To give a sufficient description of the Auslander--Reiten quiver of the module category of a self-injective algebra, we need to recall some terminology.

\begin{definition}[{\cite{Rie80}}]
Let $(\mathfrak{T},\tau)$ be a translation quiver.
    \begin{enumerate}
        \item A vertex $x$ in $\mathfrak{T}$ is \textit{stable} if $\tau^{\ell}x$ is non-projective for all $\ell \geq 0$.
        \item The translation quiver $(\mathfrak{T},\tau)$ is \textit{stable} if every vertex in $\mathfrak{T}$ is stable. Equivalently, every vertex in $\mathfrak{T}$ is non-projective.
        \item Given a translation quiver $(\mathfrak{T},\tau)$, we define its \textit{stable part} to be the full sub-translation quiver of $(\mathfrak{T},\tau)$ consisting of the stable vertices in $\mathfrak{T}$.
    \end{enumerate}
\end{definition}

For example, repetitive quivers (as defined in \Cref{def:repquiver}) are stable translation quivers.
Another important class of stable translation quivers are given by \textit{stable Auslander--Reiten quivers}. These are by definition the stable parts of Auslander--Reiten quivers. 

\begin{remark}\label{rem:sAR}
    The case of self-injective bound path algebras is of particular interest because the stable Auslander--Reiten quiver contains all vertices of the Auslander--Reiten quiver except the indecomposable projectives. Indeed, if $M$ is a non-projective indecomposable, then $\tau^{\ell}M$ cannot possibly be projective for any $\ell\geq 0$, because there would otherwise exist a non-injective indecomposable projective.
\end{remark}

Before proving \Cref{thm:Self-injective}, we follow Riedtmann's approach to characterising the stable Auslander--Reiten quivers of self-injective algebras \cite{Rie80} over algebraically closed fields. By \Cref{rem:fielddoesntmatter}\eqref{rem:fdnm_magnitude},  the Auslander--Reiten quiver of a bound path algebra is unaltered under extension of scalars. We may therefore consider self-injective bound path algebras over an arbitrary field.

\begin{definition}
        Let $G$ be a group acting on a translation quiver $(\mathfrak{T},\tau)$ by automorphisms (see \Cref{def:translationQuiver}\eqref{def:translationQuiverMor}), i.e.\! a subgroup of the automorphism group of $(\mathfrak{T},\tau)$.  We say that the action of $G$ is \textit{admissible} if for every $x$ in $\mathfrak{T}_0$, each orbit of $G$ meets $\{x\} \cup x^-$ in at most one vertex, and meets $\{x\} \cup x^+$ in at most one vertex. 
\end{definition}

The translation $\tau$ of a stable translation quiver $(\mathfrak{T},\tau)$ determines an automorphism of $(\mathfrak{T},\tau)$. By \Cref{def:translationQuiver}\eqref{def:translationQuiverMor}, the automorphism $\tau$ is central in the automorphism group of $(\mathfrak{T},\tau)$.

If a group $G$ acts admissibly on a translation quiver $(\mathfrak{T},\tau)$, then the quotient $\mathfrak{T}/G$ again admits the structure of a translation quiver. We abuse notation by writing $\tau$ for the induced translation on $\mathfrak{T}/G$. We call $(\mathfrak{T}/G,\tau)$ the \textit{orbit translation quiver} of $\mathfrak{T}$ with respect to $G$.
If $(\mathfrak{T},\tau)$ is stable, so is $(\mathfrak{T}/G,{\tau})$. In particular, an admissible quotient $\Z T/G$ of a repetitive quiver $\Z T$ is a stable translation quiver. We now recall how Riedtmann characterises the stable Auslander--Reiten quivers of connected representation-finite bound path $k$-algebras. 

\begin{lemma}[{\cite[Hauptsatz and Anhang 2]{Rie80}}]\label{thm:Rie80}
    Let $\Lambda$ be a representation-finite bound path $k$-algebra. 
    \begin{enumerate}
        \item\label{thm:Rie80Haupt} For every connected component $(\mathfrak{L},\tau)$ of the stable Auslander--Reiten quiver of $\leftmod{\Lambda}$, there exists a Dynkin quiver $T$ and a non-trivial group $G$ acting admissibly on $\Z T$ such that $(\mathfrak{L},\tau)$ is isomorphic to $\Z T/G$ as a translation quiver.
        \item\label{thm:Rie80Anh2} Let ${T}$ be a Dynkin quiver. Then any non-trivial group acting admissibly on $\Z {T}$ is an infinite cyclic group. All possible generators are of the form $\sigma \tau^{-r}$, where $r\geq 1$ and $\sigma$ is an automorphism of $\Z {T}$ which fixes a vertex $x\in\Z T$.
        \item\label{thm:Rie80sAR} In particular, if $\Lambda$ is self-injective, connected, and not semisimple, then the stable Auslander--Reiten quiver of $\leftmod{\Lambda}$ is connected (see \Cref{rem:sAR}) and isomorphic to $\Z T/\langle \sigma \tau^{-r}\rangle$. Here, $T$ is a Dynkin quiver, $r\geq 1$, and $\sigma$ is an automorphism of $\Z {T}$ with a fixed vertex.
    \end{enumerate}
\end{lemma}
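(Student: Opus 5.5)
The statement collects results of Riedtmann, so the plan is to recover (1) and (2) from her covering-theoretic structure theory of stable translation quivers, and then deduce (3) formally.

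For \eqref{thm:Rie80Haupt}, let $(\mathfrak{L},\tau)$ be a connected component of the stable Auslander--Reiten quiver. It is a connected stable translation quiver without loops and multiple arrows, and it is finite since $\Lambda$ is representation-finite.
\begin{enumerate}
\item The first step is Riedtmann's Struktursatz. One chooses a vertex $x$ and builds a section $T$: a connected subquiver meeting each $\tau$-orbit exactly once, obtained by walking along sectional paths from $x$. The universal cover of $\mathfrak{L}$ is then $\Z T$, with $T$ a tree, and $\mathfrak{L}\simeq \Z T/G$ for a group $G$ of automorphisms acting freely, hence admissibly.
\item Next one shows $T$ is Dynkin. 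Because $\mathfrak{L}$ is finite, the function $x\mapsto \dim_k$ of the module at $x$ pulls back to a positive additive function on $\Z T$ that is bounded. The Happel--Preiser--Ringel characterisation (a bounded additive function forces Dynkin type), or equivalently the positivity of the Tits form, then gives that the underlying graph of $T$ is a simply-laced Dynkin diagram.
\item Finally, $G$ is non-trivial because $\Z T$ is infinite while $\mathfrak{L}$ is finite.
\end{enumerate}

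For \eqref{thm:Rie80Anh2}, I would study $\mathrm{Aut}(\Z T)$ for Dynkin $T$. The translation $\tau$ is central in it, and every automorphism permutes $\tau$-orbits while respecting the graph structure. This makes $\mathrm{Aut}(\Z T)$ an extension of a finite group, the automorphisms of the stable quiver fixing a vertex, by $\langle\tau\rangle\simeq\Z$. In particular it is virtually infinite cyclic.

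An admissible group acts freely on vertices, so it is torsion-free: any finite-order automorphism of $\Z T$ fixes some vertex, which one checks case by case on the Dynkin types. A non-trivial torsion-free subgroup of a virtually-$\Z$ group is infinite cyclic. Writing its generator as $\sigma\tau^{-r}$, where $\sigma$ fixes a vertex, and replacing it by its inverse if needed, gives $r\geq1$; $r\neq 0$ by torsion-freeness.

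I expect the main obstacle to be this analysis of $\mathrm{Aut}(\Z T)$, namely showing that finite-order elements have fixed points and hence that any element decomposes as $\sigma\tau^{m}$. This must be verified type by type, including the exceptional symmetries for $A_n$ (reflection composed with $\tau$-shifts) and $D_4$.

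For \eqref{thm:Rie80sAR}, let $\Lambda$ be self-injective, connected and non-semisimple. By \Cref{rem:sAR}, the stable part consists of all non-projective vertices. The Auslander--Reiten quiver is connected, since $\Lambda$ is connected and representation-finite. By \Cref{lem:DK}, each removed projective vertex $U$ is adjacent only to $\mathrm{rad}(U)$ and $U/\mathrm{soc}(U)$. These two vertices remain linked through the middle term $X$ of the mesh \eqref{eq:DKseq}, or via $\tau$ if $X=0$. Hence deleting the projectives preserves connectedness.

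The stable quiver is non-empty because $\Lambda$ is not semisimple. Applying \eqref{thm:Rie80Haupt} to this single component, and then \eqref{thm:Rie80Anh2} to the group obtained, yields $\Z T/\langle\sigma\tau^{-r}\rangle$ as claimed.
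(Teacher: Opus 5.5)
Your proposal has two genuine gaps, one in (1) and one in (2).

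\textbf{The Dynkin step in (1).} On a stable component $\mathfrak{L}$ the length function is not additive. It is only subadditive, $\ell(x)+\ell(\tau x)\geq\sum_{y\in x^-\cap\mathfrak{L}_0}\ell(y)$, because Auslander--Reiten sequences ending in $\mathfrak{L}$ may have middle terms outside $\mathfrak{L}$ (e.g.\ $U$ in \eqref{eq:DKseq}).

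Worse, the criterion you quote runs the other way. Summing a positive additive function over a common $\tau$-period (one exists since $\mathfrak{L}$ is finite) gives a positive $f$ on $T$ with $2f(x)=\sum_y f(y)$ over the neighbours $y$ of $x$. That is a radical vector of the Tits form, which forces $T$ to be Euclidean, not Dynkin. For instance, $\delta$ gives a bounded positive additive function on $\Z\tilde{D}_4$, and $\Z\tilde{D}_4/\langle\tau\rangle$ is a finite stable translation quiver.

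Dynkin type comes from a positive subadditive function that is \emph{not} additive, so you must produce a strict inequality somewhere:
\begin{itemize}
\item By Auslander's theorem, the component of $\mathfrak{AR}^{\Lambda}$ containing $\mathfrak{L}$ contains an indecomposable projective.
\item Since $\mathfrak{L}$ has no projective vertices, some $x\in\mathfrak{L}_0$ has a neighbour outside $\mathfrak{L}$. This gives a strict inequality at $x$ or at $\tau^{-1}x$; note that $\tau$ permutes the finite set $\mathfrak{L}_0$.
\item The function on $T$ obtained by summing $\ell$ over a $\tau$-period is then positive, subadditive and not additive.
\item Vinberg's criterion (or Happel--Preiser--Ringel, to cover a possibly infinite $T$) then gives Dynkin type.
\end{itemize}
Also, a free action need not be admissible; admissibility of $G$ comes from the covering construction in the Struktursatz.

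\textbf{The generator form in (2).} The step you flagged fails: not every automorphism of $\Z T$ is of the form $\sigma\tau^{m}$ with $\sigma$ fixing a vertex. For $T$ of type $A_n$ with $n$ even, the reflection of $T$ lifts only to a glide reflection $\psi$ with $\psi^2=\tau^{-1}$. Hence $\mathrm{Aut}(\Z T)=\langle\psi\rangle\cong\Z$, and the extension by $\langle\tau\rangle$ does not split.

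Concretely, take $T\colon 1\to 2$. Then $\Z T$ is the linearly oriented path $\cdots\to(0,1)\to(0,2)\to(1,1)\to(1,2)\to\cdots$, with $\tau$ the shift back by two steps and $\psi$ the shift forward by one step.
\begin{itemize}
\item $\langle\psi^3\rangle$ acts admissibly: its orbits are residue classes modulo $3$ along the path, while $\{x\}\cup x^{\pm}$ consists of two consecutive vertices.
\item Yet $\psi^{\pm3}=\sigma\tau^{-r}$ forces $\sigma=\psi^{\pm3-2r}$, an odd power of $\psi$, which has no fixed vertex.
\item Consistently, $\chi_{\mathrm{AR}}(\Z T/\langle\psi^3\rangle)=3-3+3=3$ is not of the form $2r$ that \Cref{lem:Self-injective_eu} gives for quotients by $\langle\sigma\tau^{-r}\rangle$.
\end{itemize}
So no analysis of $\mathrm{Aut}(\Z T)$ can yield the generator description for arbitrary admissible groups. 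What survives of your argument is that non-trivial admissible groups are infinite cyclic, since $\mathrm{Aut}(\Z T)/\langle\tau\rangle$ is still finite.

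Consequently, (3) does not follow just by combining (1) and (2). For self-injective $\Lambda$, the form $\sigma\tau^{-r}$ needs input specific to that setting, namely Riedtmann's analysis of the configuration of projective vertices (cf.\ \cite{BLR81}), which excludes the odd powers of $\psi$.

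The rest of your argument for (3) is fine, provided connectedness is understood with $\tau$-links included. Your case $X=0$ requires this, and so does type $A_1$, where the stable quiver has no arrows at all.
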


Let $T$ be a directed tree. Recall that a \textit{section} of $\Z T$ is a full subquiver of $\Z T$ containing exactly one vertex in each $\tau$-orbit. It is clear that automorphisms of $\Z T$ send sections to sections, and that only the identity automorphism sends a given section to itself.
Let $\sigma$ be as in \Cref{thm:Rie80}\eqref{thm:Rie80Anh2}, and let $x$ be a fixed vertex of $\sigma$. Since $x$ is contained in at least one section $S$ of $\Z T$, $\sigma$ sends $S$ to a (possibly different) section containing $x$. There are only finitely many sections of $\Z T$ containing $x$. Hence, there must exist positive integers $s\neq t$ such that $\sigma^sS=\sigma^tS$. This shows that $\sigma$ must be of finite order.

To prove \Cref{thm:Self-injective}, we need a result on the Auslander--Reiten--Euler characteristic of certain finite stable translation quivers. An illustrative example will follow in \Cref{ex:Self-injective_eu}.

\begin{lemma}\label{lem:Self-injective_eu}
    {Let $T$ be a directed tree and let $\sigma$ be an automorphism on $\Z T$ of finite order. Then $\chi_{\mathrm{AR}}(\mathbb{Z}T/\langle \sigma\tau^{-r}\rangle)=2r$ for any $r\geq 1$.}
\end{lemma}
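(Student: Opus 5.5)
We will show that the quotient $\mathfrak{Q}\coloneqq\Z T/\langle\sigma\tau^{-r}\rangle$ is finite with $r|T_0|$ vertices, $2r|T_1|$ arrows and $r|T_0|$ meshes. Since $T$ is a tree, this gives
\[
\chi_{\mathrm{AR}}(\mathfrak{Q}) = r|T_0| - 2r|T_1| + r|T_0| = 2r(|T_0|-|T_1|) = 2r.
\]
This is the stable, periodic analogue of \Cref{lem:Magnitude of repetitions}: each period of length $r$ in $\Z T$ contributes one copy of $T_0$, two copies of $T_1$ (the arrows inside a slice $\{d\}\times T$ and the starred arrows between consecutive slices), and one mesh per vertex.

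\textbf{Step 1: a fundamental domain.} Set $g\coloneqq\sigma\tau^{-r}$. Because $\tau$ is central in the automorphism group and $\sigma$ has finite order $m$, we get $g^m=\sigma^m\tau^{-rm}=\tau^{-rm}$. Hence $\langle g\rangle$ is infinite cyclic and acts freely on vertices and arrows. Next we count orbits. Since $\sigma$ commutes with $\tau$, it permutes the $\tau$-orbits of $\Z T$, which are indexed by $T_0$.

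We use the height function from the proof of \Cref{lem:Magnitude of repetitions}: for $(d,x)\in\Z T$, the orbit $\tau^{\Z}(d,x)$ is identified with $x$. We want a fundamental domain for $\langle g\rangle$ on each $\tau$-orbit, or rather on each $\sigma$-orbit of $\tau$-orbits. The cleanest route is to count per $\langle\tau^{m r}\rangle=\langle g^m\rangle$. The quotient $\Z T/\langle\tau^{mr}\rangle$ has $mr|T_0|$ vertices, $2mr|T_1|$ arrows and $mr|T_0|$ meshes, because every vertex of $\Z T$ is non-projective. The group $\langle g\rangle/\langle g^m\rangle\cong\Z/m$ then acts freely on this finite quiver, since $g^j=\sigma^j\tau^{-rj}$ with $0<j<m$ fixes no vertex: it shifts the $\Z$-coordinate by $r\cdot j\neq 0\bmod mr$ in a suitable sense. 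So the counts for $\mathfrak{Q}$ are those numbers divided by $m$.

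\textbf{Step 2: checking freeness.} The genuinely subtle point is that $\sigma$ need not preserve the $\Z$-coordinate, since $\Z T\cong\Z T'$ for other orientations and $\sigma$ may mix slices. I would handle this by choosing an additive function instead of the slice index. Let $\ell\colon(\Z T)_0\to\Z$ satisfy $\ell(\tau^{-1}v)=\ell(v)+2$ and change by $\pm1$ along arrows; this exists because $T$ is a tree and $\Z T$ is simply connected. Averaging $\ell$ over the finite group $\langle\sigma\rangle$ gives a $\sigma$-invariant function $\bar\ell$ that still satisfies $\bar\ell(\tau^{-1}v)=\bar\ell(v)+2$. Then $\bar\ell(g^jv)=\bar\ell(v)+2rj$, so $g^j$ has no fixed vertices or arrows for $j\neq0$. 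The same computation shows that each $\tau$-orbit is mapped into the $\tau$-orbits of its $\sigma$-orbit with a uniform shift.

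\textbf{Step 3: meshes in the quotient.} Since the action is free and admissible, $\mathfrak{Q}$ has no multiple arrows or loops, every vertex is non-projective, and the meshes of $\mathfrak{Q}$ are in bijection with $\langle g\rangle$-orbits of meshes in $\Z T$, hence with vertices of $\mathfrak{Q}$. Likewise arrows of $\mathfrak{Q}$ are orbits of arrows. The counts in Step 1 therefore hold, and
\[
\chi_{\mathrm{AR}}(\mathfrak{Q}) = \frac{mr|T_0|-2mr|T_1|+mr|T_0|}{m} = 2r.
\]

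\textbf{Main obstacle.} I expect the hardest part to be making Step 2 rigorous. This means showing that the $\sigma$-invariant grading exists and that a nontrivial power of $g$ cannot fix an arrow. Arrows are handled by the same grading argument, since the shift $2rj$ is nonzero. Admissibility of the action is assumed in this setting via \Cref{thm:Rie80}, so it is needed here only for the identification of meshes with vertex orbits.
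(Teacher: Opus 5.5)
Your proposal is correct and follows essentially the paper's route: pass to the finite quotient $\Z T/\langle\tau^{-mr}\rangle$, whose Auslander--Reiten--Euler characteristic is $2mr$ by direct counting, then divide by $m$ because the induced $\Z/m$-action is free on vertices, arrows and meshes. The only difference is how freeness is justified: you use an averaged $\sigma$-invariant height function, while the paper argues directly that a fixed point of $(\sigma\tau^{-r})^i$ with $0<i<m$ would force $\sigma^i(d,x)=(d',x)$ with $d'\neq d$, which is impossible because $\sigma$ commutes with $\tau$ and has finite order.
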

\begin{proof} 
    We first compute the value of $\chi_{\mathrm{AR}}(\Z T/\langle \tau^{-t}\rangle)$ for an arbitrary $t\geq 1$. We may reconstruct $\Z T/\langle \tau^{-t}\rangle$ from the $t$-bounded repetitive quiver $tT$ by adding $|T_1|$ arrows and $|T_0|$ meshes. Since $T$ is a tree, we have by \Cref{lem:Magnitude of repetitions} that
    \begin{equation*}
       \chi_{\mathrm{AR}}(\Z T/\langle \tau^{-t}\rangle) =\chi_{\mathrm{AR}}(tT)-|T_1|+|T_0| =  \chi_{\mathrm{AR}}(tT) + 1 = 2t.
    \end{equation*}

    {Let $s$ denote the order of $\sigma$.} 
    Since $\tau$ is central in the automorphism group of $\Z T$, we have
    $(\sigma \tau^{-r})^s = \tau^{-sr}$. The group $\langle \sigma\tau^{-r} \rangle$ therefore admits a well-defined admissible action on $\Z T/\langle \tau^{-sr}\rangle$. The orbit translation quiver $(\Z T/\langle \tau^{-sr}\rangle)/ \langle \sigma \tau^{-r} \rangle$ is isomorphic to $\Z T/\langle \sigma \tau^{-r}\rangle$. Observe that $s$ is equal to the smallest positive integer $i$ such that $(\sigma \tau^{-r})^i$ has fixed points in $\Z T/\langle \tau^{-sr}\rangle$. Otherwise, one can show that $\sigma^i$ sends some $(d,x)\in \Z T$ to $(d',x)$ for some $d'\neq d$, which is impossible, as $\sigma$ has finite order. Thus, each $\langle \sigma\tau^{-r} \rangle$-orbit of vertices in  $\Z T/\langle \tau^{-sr}\rangle$, each $\langle \sigma\tau^{-r} \rangle$-orbit of morphisms in  $\Z T/\langle \tau^{-sr}\rangle$, and each $\langle \sigma\tau^{-r} \rangle$-orbit of meshes in  $\Z T/\langle \tau^{-sr}\rangle$, contain exactly $s$ elements. It follows that
    \[
         \chi_{\mathrm{AR}}(\Z T/\langle \tau^{-sr}\rangle) = s\cdot\chi_{\mathrm{AR}}(\Z T/\langle \sigma\tau^{-r}\rangle).
    \]
    Using the previous paragraph, we conclude that
    \begin{equation*}
       \chi_{\mathrm{AR}}(\Z T/\langle \sigma\tau^{-r}\rangle) = {\chi_{\mathrm{AR}}(\Z T/\langle \tau^{-sr}\rangle)\over s} = {2sr\over s} =  2r. \qedhere
    \end{equation*}
\end{proof} 

\begin{example}\label{ex:Self-injective_eu}
Let $T$ be the quiver \begin{tikzcd}1 \arrow[r]& 2 \arrow[r] & 3 & \arrow[l]4\arrow[r] &  5. \end{tikzcd} The stable translation quiver $\Z T/\langle \tau^{-6}\rangle$ can be displayed as follows
\[
\begin{tikzcd}[font=\footnotesize,row sep=1em, column sep=-0.1em]
                  &                              & \textcolor{friendly_beige}{(5,5)} \arrow[rd]                              &                                                           & {\textcolor{friendly_deepblue}{{(0,5)}}} \arrow[rd,color=friendly_deepblue] \arrow[ll, "\tau"', dashed]            &                                                           & \textcolor{friendly_deepblue}{(1,5)} \arrow[rd,color=friendly_deepblue] \arrow[ll, "\tau"', dashed,color=friendly_deepblue]            &                                                           & \textcolor{friendly_beige}{{(2,5)}} \arrow[ll, "\tau"', dashed] \arrow[rd,color=friendly_beige]            &                                                           & \textcolor{friendly_beige}{(3,5)} \arrow[rd,color=friendly_beige] \arrow[ll, "\tau"', dashed,color=friendly_beige]            &                                                           & \textcolor{friendly_beige}{(4,5)} \arrow[rd,color=friendly_beige] \arrow[ll, "\tau"', dashed,color=friendly_beige]            &                                                           & \textcolor{friendly_beige}{(5,5)} \arrow[rd] \arrow[ll, "\tau"', dashed,color=friendly_beige]            &                                                           & {\textcolor{friendly_deepblue}{{(0,5)}}} \arrow[ll, "\tau"', dashed] \\
                  &   \textcolor{friendly_beige}{(5,4)} \arrow[ru,color=friendly_beige] \arrow[rd,color=friendly_beige]                           &                                                & \textcolor{friendly_deepblue}{{(0,4)}} \arrow[ru,color=friendly_deepblue] \arrow[rd,color=friendly_deepblue] \arrow[ll, "\tau"', dashed]                           &                                                           & \textcolor{friendly_deepblue}{(1,4)} \arrow[ru,color=friendly_deepblue] \arrow[rd,color=friendly_deepblue] \arrow[ll, "\tau"', dashed,color=friendly_deepblue] &                                                           & \textcolor{friendly_deepblue}{(2,4)} \arrow[rd,color=friendly_deepblue] \arrow[ru] \arrow[ll, "\tau"', dashed,color=friendly_deepblue] &                                                           & \textcolor{friendly_beige}{{(3,4)}} \arrow[ru,color=friendly_beige] \arrow[rd,color=friendly_beige] \arrow[ll, "\tau"', dashed] &                                                           & \textcolor{friendly_beige}{(4,4)} \arrow[ru,color=friendly_beige] \arrow[rd,color=friendly_beige] \arrow[ll, "\tau"', dashed,color=friendly_beige] &                                                           & \textcolor{friendly_beige}{(5,4)} \arrow[ru,color=friendly_beige] \arrow[rd,color=friendly_beige] \arrow[ll, "\tau"', dashed,color=friendly_beige] &                                                           & {\textcolor{friendly_deepblue}{{(0,4)}}} \arrow[rd,color=friendly_deepblue] \arrow[ru,color=friendly_deepblue] \arrow[ll, "\tau"', dashed]  &                                    \\
                  &                              & \textcolor{friendly_beige}{(5,3)} \arrow[ru] \arrow[rd]                   &                                                           & \textcolor{friendly_deepblue}{{(0,3)}} \arrow[ru,color=friendly_deepblue] \arrow[rd,color=friendly_deepblue] \arrow[ll, "\tau"', dashed] &                                                           & \textcolor{friendly_deepblue}{(1,3)} \arrow[rd,color=friendly_deepblue] \arrow[ru,color=friendly_deepblue] \arrow[ll, "\tau"', dashed,color=friendly_deepblue] &                                                           & \textcolor{friendly_deepblue}{(2,3)} \arrow[ll, "\tau"', dashed,color=friendly_deepblue] \arrow[ru] \arrow[rd] &                                                           & \textcolor{friendly_beige}{{(3,3)}} \arrow[ru,color=friendly_beige] \arrow[rd,color=friendly_beige] \arrow[ll, "\tau"', dashed] &                                                           & \textcolor{friendly_beige}{(4,3)} \arrow[ru,color=friendly_beige] \arrow[rd,color=friendly_beige] \arrow[ll, "\tau"', dashed,color=friendly_beige] &                                                           & \textcolor{friendly_beige}{(5,3)} \arrow[ru] \arrow[rd] \arrow[ll, "\tau"', dashed,color=friendly_beige] &                                                           & {\textcolor{friendly_deepblue}{{(0,3)}}} \arrow[ll, "\tau"', dashed]  \\
                  & \textcolor{friendly_beige}{(5,2)} \arrow[rd] \arrow[ru,color=friendly_beige] &                                                & \textcolor{friendly_deepblue}{{(0,2)}} \arrow[ru,color=friendly_deepblue] \arrow[rd,color=friendly_deepblue] \arrow[ll, "\tau"', dashed] &                                                           & \textcolor{friendly_deepblue}{(1,2)} \arrow[ru,color=friendly_deepblue] \arrow[rd,color=friendly_deepblue] \arrow[ll, "\tau"', dashed,color=friendly_deepblue] &                                                           & \textcolor{friendly_deepblue}{(2,2)} \arrow[rd,color=friendly_deepblue] \arrow[ru,color=friendly_deepblue] \arrow[ll, "\tau"', dashed,color=friendly_deepblue] &                                                           & \textcolor{friendly_beige}{{(3,2)}} \arrow[ru,color=friendly_beige] \arrow[rd,color=friendly_beige] \arrow[ll, "\tau"', dashed] &                                                           & \textcolor{friendly_beige}{(4,2)} \arrow[ru,color=friendly_beige] \arrow[rd,color=friendly_beige] \arrow[ll, "\tau"', dashed,color=friendly_beige] &                                                           & \textcolor{friendly_beige}{(5,2)} \arrow[ru] \arrow[rd] \arrow[ll, "\tau"', dashed,color=friendly_beige] &                                                           & {\textcolor{friendly_deepblue}{{(0,2)}}} \arrow[ru,color=friendly_deepblue] \arrow[ll, "\tau"', dashed]             &                                    \\
\textcolor{friendly_beige}{(5,1)} \arrow[ru,color=friendly_beige] &                              & \textcolor{friendly_deepblue}{{(0,1)}} \arrow[ru,color=friendly_deepblue] \arrow[ll, "\tau"', dashed] &                                                           & \textcolor{friendly_deepblue}{(1,1)} \arrow[ru,color=friendly_deepblue] \arrow[ll, "\tau"', dashed,color=friendly_deepblue]            &                                                           & \textcolor{friendly_deepblue}{(2,1)} \arrow[ru,color=friendly_deepblue] \arrow[ll, "\tau"', dashed,color=friendly_deepblue]            &                                                           & \textcolor{friendly_deepblue}{(3,1)} \arrow[ll, "\tau"', dashed,color=friendly_deepblue] \arrow[ru]            &                                                           & \textcolor{friendly_beige}{{(4,1)}} \arrow[ru,color=friendly_beige] \arrow[ll, "\tau"',dashed]            &                                                           & \textcolor{friendly_beige}{(5,1)} \arrow[ru,color=friendly_beige] \arrow[ll, "\tau"', dashed,color=friendly_beige]            &                                                           & {\textcolor{friendly_deepblue}{{(0,1)}}} \arrow[ru,color=friendly_deepblue] \arrow[ll, "\tau"', dashed]                        &                                                          &                                   
\end{tikzcd}
\]
where vertices with equal labels are identified. Let $\sigma$ be the automorphism of $\Z T$ given by reflection about the line $\{(i,3) \, \vert \, i \in \Z\}$. We also denote the induced automorphism of $\Z T/\langle \tau^{-6}\rangle$ by $\sigma$. Then, $\sigma\tau^{-3}$ sends the \textcolor{friendly_deepblue}{blue} subquiver above to the \textcolor{friendly_beige}{orange} subquiver, and \textit{vice versa}. One sees that the $\sigma\tau^{-3}$-orbits of vertices, arrows, and meshes all have cardinality equal to the order of $\sigma$ (namely 2), as shown in the proof of \Cref{lem:Self-injective_eu}. 
One can compute directly that
\[\chi_{\mathrm{AR}}(\Z T/\langle \sigma\tau^{-3}\rangle) = 15 -24+15=6, \]
which is consistent with \Cref{lem:Self-injective_eu}.
\end{example}
We are ready to prove the main theorem of this section.

\begin{theorem}\label{thm:Self-injective}
    Let $\Lambda$ be a connected non-semisimple representation-finite self-injective bound path $k$-algebra of rank $n$.
    Then
    \begin{align*}
   \MagnitudeOfModuleCat{\Lambda} = n\left(2{h_{T}-1\over |{T}_0| }-1\right),
    \end{align*}
  where $T=(T_0,T_1)$ is a Dynkin quiver such that the stable Auslander--Reiten quiver of $\leftmod{\Lambda}$ is an admissible quotient of $\Z T$, and $h_{T}$ is the Coxeter number of $T$.
In particular, since $h_{T}-1\geq |{T}_0|$, we have $\MagnitudeOfModuleCat{\Lambda}\geq n$. 
\end{theorem}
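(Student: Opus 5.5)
We express $\MagnitudeOfModuleCat{\Lambda}$ as the Auslander--Reiten--Euler characteristic of the stable Auslander--Reiten quiver plus a correction from the projectives, and then evaluate both pieces with \Cref{lem:Self-injective_eu} and a vertex count. Since $\Lambda$ is connected, non-semisimple and self-injective, every indecomposable projective $P$ is projective-injective and non-simple. The first idea would be \Cref{lem:DK_magnitude} with $U=\Lambda$, but $\Lambda/\mathrm{soc}(\Lambda)$ is no longer self-injective, so I would instead compare directly. By \Cref{lem:DK}\eqref{lem:DK_arrows}, each projective vertex $P$ of $\mathfrak{AR}^{\Lambda}$ has exactly two incident arrows, $\mathrm{rad}(P)\to P\to P/\mathrm{soc}(P)$. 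Moreover, the mesh ending at $P/\mathrm{soc}(P)$ has $P$ as one of its middle terms. Removing the $n$ projective vertices and their $2n$ arrows from $\mathfrak{AR}^{\Lambda}$ yields, by \Cref{rem:sAR}, the stable Auslander--Reiten quiver $\mathfrak{L}$. Its meshes are in bijection with the meshes of $\mathfrak{AR}^{\Lambda}$, since every non-projective vertex is non-projective in both and $\tau$ agrees; only the mesh shapes change. Hence
\[
\MagnitudeOfModuleCat{\Lambda}=\chi_{\mathrm{AR}}(\mathfrak{AR}^{\Lambda},\tau)=\chi_{\mathrm{AR}}(\mathfrak{L},\tau)+n-2n=\chi_{\mathrm{AR}}(\mathfrak{L},\tau)-n .
\]
Here I would double-check that $\mathfrak{L}$ counts meshes as $|\mathfrak{L}_0|$, i.e. every stable vertex is non-projective. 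This holds because $\mathfrak{L}$ is stable.

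Next, \Cref{thm:Rie80}\eqref{thm:Rie80sAR} gives $\mathfrak{L}\simeq \Z T/\langle\sigma\tau^{-r}\rangle$ with $T$ Dynkin, $r\ge1$ and $\sigma$ an automorphism fixing a vertex. The discussion preceding \Cref{lem:Self-injective_eu} shows that $\sigma$ has finite order. Since $T$ is a tree, \Cref{lem:Self-injective_eu} gives $\chi_{\mathrm{AR}}(\mathfrak{L})=2r$. It remains to express $r$ via $n$ and $h_T$. I would count vertices: as in the proof of \Cref{lem:Self-injective_eu}, with $s$ the order of $\sigma$, the quotient $\Z T/\langle\tau^{-sr}\rangle$ has $sr|T_0|$ vertices and orbits of size exactly $s$. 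Hence $|\mathfrak{L}_0|=r|T_0|$, so $N=n+r|T_0|$.

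The main obstacle is a second relation linking $n$, $r$ and $h_T$. I would use Riedtmann's description of where projectives sit: the configuration of projective-injectives in $\Z T$ is $\tau^{m_T}$-periodic, with period $m_T=h_T-1$ as in \Cref{propdef:Coxeter}\ref{propdef:Coxeter4}, and each $\tau^{-(h_T-1)}$-strip of $\Z T$ contains exactly $|T_0|$ projective vertices. This is the classical fact that a configuration of $\Z T$ has density $|T_0|$ per $h_T-1$ translates, because $\mathrm{rad}(P)$ and the hammock from $\mathrm{rad}(P)$ to $P/\mathrm{soc}(P)$ spans $h_T-1$ steps. Lifting the projectives of $\Lambda$ to $\Z T$, a fundamental domain of $\langle\sigma\tau^{-r}\rangle$ of width $r$ then gives $n=r|T_0|/(h_T-1)$, i.e. $r=n(h_T-1)/|T_0|$. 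I expect justifying this density count from \cite{Rie80} (equivalently, the standard identity $|\mathfrak{L}_0|=n(h_T-1)$) to be the delicate step; if needed I would cite it directly. Substituting gives
\[
\MagnitudeOfModuleCat{\Lambda}=2r-n=n\left(2\frac{h_T-1}{|T_0|}-1\right).
\]
Finally, $h_T-1\ge|T_0|$ can be read off the table in \Cref{thm:Dynkin}, and this yields $\MagnitudeOfModuleCat{\Lambda}\ge n$.
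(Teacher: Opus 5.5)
Your proposal is correct and is essentially the paper's proof. Deleting the $n$ projective vertices and their $2n$ arrows gives $\MagnitudeOfModuleCat{\Lambda}=\chi_{\mathrm{AR}}(\Z T/\langle\sigma\tau^{-r}\rangle)-n=2r-n$ via \Cref{lem:Self-injective_eu}, and the paper then simply cites the identity $r=n(h_T-1)/|T_0|$ from \cite[2.3]{BLR81}. You should cite it too, because your heuristic for the density is inaccurate as stated: since $P/\mathrm{soc}(P)=\tau^{-1}\mathrm{rad}(P)$, the mesh through $P$ spans a single $\tau$-step, not $h_T-1$ steps.
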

\begin{proof}
    Every non-projective vertex in the Auslander--Reiten quiver of $\leftmod{\Lambda}$ is stable. Since $\Lambda$ is non-semisimple, the stable Auslander--Reiten quiver of $\leftmod{\Lambda}$ is non-empty. By \Cref{thm:Rie80}\eqref{thm:Rie80sAR}, there exists a Dynkin quiver $T$ with the desired property. Also, by \Cref{rem:sAR}, the stable Auslander--Reiten quiver of $\leftmod{\Lambda}$ has $n$ fewer vertices than the Auslander--Reiten quiver of $\leftmod{\Lambda}$. Furthermore, by \Cref{lem:DK}\eqref{lem:DK_arrows}, it has $2n$ fewer arrows. It follows that 
  \begin{align*}
   \MagnitudeOfModuleCat{\Lambda}  &= \chi_{\mathrm{AR}}(\Z {T}/\langle \sigma\tau^{-r} \rangle) - n & (\text{\Cref{thm:Rie80}\eqref{thm:Rie80sAR}}) \\
   &= 2r - n. & (\text{\Cref{lem:Self-injective_eu}})
   \end{align*}
   Finally, it is known that $r= {n(h_{T}-1)\over |{T}_0|}$ \cite[2.3]{BLR81}, which can be substituted into the expression above.
  \end{proof}

Note that the orientation of $T$ in \Cref{thm:Self-injective} can be chosen freely. Indeed, up to isomorphism of translation quivers, the repetitive quiver $\Z T$ does not depend on this orientation, so the admissible group action can be transferred along such an isomorphism.

We now deduce that \Cref{conjecture} holds for representation-finite self-injective bound path $k$-algebras (note that $\MagnitudeOfModuleCat{\Lambda}\geq n$ by \Cref{thm:Self-injective}).

\begin{corollary}\label{cor:Self-injective}
In the setting of \Cref{thm:Self-injective}, the following assertions are equivalent.
	\begin{enumerate}
        	\item\label{thm:Self-injective_specialbiserial_c} $\Lambda$ is special biserial.
         \item\label{thm:Self-injective_specialbiserial_a} $\MagnitudeOfModuleCat{\Lambda}=n$.
          \item\label{thm:Self-injective_specialbiserial_b} ${T}$ is of type $A_{|T_0|}$.
        \end{enumerate}
\end{corollary}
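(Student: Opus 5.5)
We prove the cycle \eqref{thm:Self-injective_specialbiserial_c}$\Rightarrow$\eqref{thm:Self-injective_specialbiserial_a}$\Rightarrow$\eqref{thm:Self-injective_specialbiserial_b}$\Rightarrow$\eqref{thm:Self-injective_specialbiserial_c}.

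\textbf{Step \eqref{thm:Self-injective_specialbiserial_c}$\Rightarrow$\eqref{thm:Self-injective_specialbiserial_a}.} This is immediate from \Cref{thm:SpecialBiserial}, since $\Lambda$ is representation-finite.

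\textbf{Step \eqref{thm:Self-injective_specialbiserial_a}$\Rightarrow$\eqref{thm:Self-injective_specialbiserial_b}.} By \Cref{thm:Self-injective}, $\MagnitudeOfModuleCat{\Lambda}=n$ holds exactly when $2(h_T-1)/|T_0|-1=1$, that is, when $h_T-1=|T_0|$. Now compare with the table in \Cref{thm:Dynkin}. For $A_m$ we have $h_T-1=m=|T_0|$. For $D_m$ with $m\geq 4$ we have $h_T-1=2m-3>m$, and for $E_6,E_7,E_8$ we have $h_T-1=11,17,29>6,7,8$. Hence equality forces $T$ to be of type $A$. In particular this also gives the inequality $h_T-1\geq|T_0|$ used in \Cref{thm:Self-injective}.

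\textbf{Step \eqref{thm:Self-injective_specialbiserial_b}$\Rightarrow$\eqref{thm:Self-injective_specialbiserial_c}.} This is the only step with real content. The plan is to use \Cref{lem:WW}\eqref{lem:ARseq}: it suffices to show that every Auslander--Reiten sequence in $\leftmod{\Lambda}$ has at most two non-projective middle terms.

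Take an Auslander--Reiten sequence ending in a non-projective $x$. By \Cref{rem:sAR}, every non-projective vertex lies in the stable Auslander--Reiten quiver. Moreover, the non-projective middle terms are exactly the immediate predecessors of $x$ inside the stable part. By \Cref{lem:DK}\eqref{lem:DK_arrows}, a projective middle term can only be a projective-injective $U$ with $x=U/\mathrm{soc}(U)$, and removing it removes exactly that vertex from $x^-$.

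So it suffices to bound the number of predecessors of $x$ in the stable translation quiver $\Z T/\langle\sigma\tau^{-r}\rangle$ (\Cref{thm:Rie80}\eqref{thm:Rie80sAR}). The group action is admissible, so the quotient map is locally bijective on $\{x\}\cup x^-$. Hence the number of predecessors of $x$ equals that of a lift $\tilde x$ in $\Z T$. In $\Z A_m$ every vertex has at most two immediate predecessors, namely one from each of its at most two neighbours in $T$. Therefore $\beta(\Lambda)\leq 2$, and $\Lambda$ is special biserial.

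The main care point is identifying the non-projective middle terms with stable predecessors, and checking that admissibility preserves the in-degree. Both follow from the definitions.
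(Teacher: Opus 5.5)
Your proposal is correct and follows the paper's proof: the same cycle (1)$\Rightarrow$(2)$\Rightarrow$(3)$\Rightarrow$(1). It uses \Cref{thm:SpecialBiserial}, then the formula of \Cref{thm:Self-injective} to reduce to $h_T-1=|T_0|$, and finally \Cref{rem:sAR} with the criterion $\beta(\Lambda)\leq 2$ of \Cref{lem:WW}\eqref{lem:ARseq}. The only difference is that you spell out two details the paper leaves implicit: the explicit comparison of Coxeter numbers, and the fact that vertices of $\Z A_m$, and hence by admissibility of its quotient, have at most two immediate predecessors.
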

\begin{proof}
The fact that \eqref{thm:Self-injective_specialbiserial_c} implies \eqref{thm:Self-injective_specialbiserial_a} was shown in \Cref{thm:SpecialBiserial}. Next, if \eqref{thm:Self-injective_specialbiserial_a} holds, then $n\left(2{h_{T}-1 \over |{T}_0|}-1\right)=n$ by \Cref{thm:Self-injective}, which leads to $h_{T}-1=|{T}_0|$. This can only happen if ${T}$ is of type $A_{|T_0|}$, which is exactly the assertion in \eqref{thm:Self-injective_specialbiserial_b}. Finally, if ${T}$ is of type $A_{|T_0|}$, \Cref{rem:sAR} and \Cref{lem:WW}\eqref{lem:ARseq} imply that $\Lambda$ is special biserial.
\end{proof}

\begin{remark}
Suppose that $k$ is algebraically closed and of prime characteristic $p$. Let $G$ be a finite group such that $p$ divides the order of $G$. It is well-known that a block $B$ of $kG$ is self-injective. If $B$ is representation-finite, it follows from the proof of \Cref{cor:block} that $\MagnitudeOfModuleCat{B}=n$, where $n$ denotes the number of simple $B$-modules up to isomorphism. Under the additional assumption that $B$ is connected and not semisimple, \Cref{cor:Self-injective} now lets us describe the stable Auslander--Reiten quiver of $\leftmod{B}$ as a non-trivial admissible quotient of $\Z T$, where $T$ is a Dynkin quiver of type $A$.
\end{remark}

\bibliographystyle{ourIEEEstyle.bst}
\bibliography{magrefs}

\end{document}